\numberwithin{equation}{section}
\theoremstyle{plain}
\newtheorem{theorem}{Theorem}[section]
\newtheorem{lemma}[theorem]{Lemma}
\theoremstyle{remark}
\newtheorem{example}[theorem]{Example}
\theoremstyle{definition}
\renewcommand{\AA}{\mathcal{A}}
\newcommand{\BB}{\mathcal{B}}
\newcommand{\HH}{\mathcal{H}}
\newcommand{\LL}{\mathcal{L}}
\newcommand{\PP}{\mathcal{P}}
\newcommand{\MM}{\mathcal{M}}
\newcommand{\CC}{\mathcal{C}}
\newcommand{\FF}{\mathcal{F}}
\newcommand{\DD}{\mathcal{D}}
\newcommand{\II}{\mathcal{I}}
\newcommand{\UU}{\mathcal{U}}
\newcommand{\R}{\mathbb{R}}
\newcommand{\Z}{\mathbb{Z}}
\newcommand{\N}{\mathbb{N}}
\newcommand{\hhh}{\mathtt{h}}
\newcommand{\iii}{\mathtt{i}}
\newcommand{\jjj}{\mathtt{j}}
\newcommand{\kkk}{\mathtt{k}}
\newcommand{\eps}{\varepsilon}
\newcommand{\fii}{\varphi}
\newcommand{\la}{\langle}
\newcommand{\ra}{\rangle}
\newcommand{\dd}{\,\mathrm{d}}
\DeclareMathOperator{\dist}{dist}
\DeclareMathOperator{\diam}{diam}
\DeclareMathOperator{\proj}{proj}
\DeclareMathOperator{\spt}{spt}
\DeclareMathOperator{\cent}{cent}
\DeclareMathOperator{\Tan}{Tan}
\DeclareMathOperator{\TD}{\mathcal{TD}}
\DeclareMathOperator{\MD}{\mathcal{MD}}
\DeclareMathOperator{\spl}{SPL}
\DeclareMathOperator{\rad}{radius}
\DeclareMathOperator{\essinf}{ess\,inf}
\DeclareMathOperator{\dimloc}{dim_{loc}}
\DeclareMathOperator{\ldimloc}{\underline{dim}_{loc}}
\DeclareMathOperator{\dimh}{dim_H}
\DeclareMathOperator{\ldimh}{\underline{dim}_H}
\DeclareMathOperator{\dimp}{dim_p}
\begin{document}

\title{Dynamics of the scenery flow and conical density theorems}

\author{Antti K\"aenm\"aki}
\address{Department of Mathematics and Statistics \\
         P.O.\ Box 68 (Gustaf H\"allstr\"omin katu 2b) \\
         FI-00014 University of Helsinki \\
         Finland}
\email{antti.kaenmaki@helsinki.fi}

\subjclass[2000]{Primary 28A80; Secondary 37A10, 28A75, 28A33.}
\keywords{Conical densities, rectifiability, scenery flow, tangent measure.}

\begin{abstract}
  Conical density theorems are used in the geometric measure theory to derive geometric information from given metric information. The idea is to examine how a measure is distributed in small balls. Finding conditions that guarantee the measure to be effectively spread out in different directions is a classical question going back to Besicovitch \cite{Besicovitch1938} and Marstrand \cite{Marstrand1954}. Classically, conical density theorems deal with the distribution of the Hausdorff measure.
  
  The process of taking blow-ups of a measure around a point induces a natural dynamical system called the scenery flow. Relying on this dynamics makes it possible to apply ergodic-theoretical methods to understand the statistical behavior of tangent measures. This approach was initiated by Furstenberg \cite{Furstenberg1970, Furstenberg2008} and greatly developed by Hochman \cite{Hochman2010}. The scenery flow is a well-suited tool to address problems concerning conical densities.
  
  In this survey, we demonstrate how to develop the ergodic-theoretical machinery around the scenery flow and use it to study conical density theorems.
\end{abstract}

\maketitle

\begingroup
\singlespacing
\tableofcontents
\endgroup

\section{Introduction}

Rectifiability is one of the most fundamental concepts of geometric measure theory. A rectifiable set in the plane is a set that is smooth in a certain measure-theoretic sense, extending the idea of a rectifiable curve. A set is purely unrectifiable if its intersection with any rectifiable set is negligible. These concepts form a natural pair since every set can be decomposed into rectifiable and purely unrectifiable parts. The foundations of geometric measure theory were laid by Besicovitch \cite{Besicovitch1928, Besicovitch1929, Besicovitch1938}. He introduced the theory of rectifiable sets by describing the structure of the subsets of the plane having finite $\HH^1$-measure. Besicovitch's work was extended to higher dimensions by Federer \cite{Federer1947}. Relationships between densities and rectifiability was extensively studied in \cite{Marstrand1954, Marstrand1961, Marstrand1964, Mattila1975, Moore1950, MorseRandolph1944}. To a great extent, geometric measure theory is about studying rectifiable and purely unrectifiable sets.

Conical density results are used to derive geometric information from metric information. The idea is to study how a measure is distributed in small balls. Upper conical density results related to Hausdorff measure are naturally linked to rectifiability; see \cite{Besicovitch1938, Federer1969, Marstrand1954, Mattila1988, Mattila1995, Salli1985}. Conical density results for more general measures were introduced in \cite{CsornyeiKaenmakiRajalaSuomala2010, FengKaenmakiSuomala2012, KaenmakiRajalaSuomala2016, KaenmakiSuomala2008, KaenmakiSuomala2004, SahlstenShmerkinSuomala2013}. Applications of conical densities have been found in the study of porosities; see \cite{KaenmakiSuomala2008, KaenmakiSuomala2004, Mattila1988}. They have also been applied in the removability questions for Lipschitz harmonic functions; see \cite{Lorent2003, MattilaParamonov1995}.

Taking tangents is a standard tool in analysis. Tangents are usually more regular than the original object and they often capture the local behavior. Understanding how tangents behave at many points gives information about the global structure as well. For example, tangents of a differentiable function are affine maps, and they capture the full behavior of the function. Preiss \cite{Preiss1987} introduced the more general notion of a tangent measure and employed it to characterize rectifiable sets by the existence of densities. Tangent measures are useful because, again, they are more regular than the original measure (for example, tangent measures of rectifiable measures are flat) but one can still pass from information about the tangent measure to the original measure. As another example of the general idea, the tangent sets and measures on self-affine sets have a regular product structure which is absent in the more complicated original object; see \cite{BandtKaenmaki2013, FergusonFraserSahlsten2015, KaenmakiKoivusaloRossi2015, KaenmakiOjalaRossi2016}. The process of taking blow-ups of a measure or a set around a point in fact induces a natural dynamical system consisting in zooming-in around the point. This scenery flow opens a door to ergodic-theoretic methods, which were pioneered by Furstenberg in \cite{Furstenberg1970} and then in more developed form in \cite{Furstenberg2008}, with a comprehensive theory developed by Hochman in \cite{Hochman2010}.

Tangent measures are weak limits of the scenery flow. By applying ergodic theory, we get information about the statistical behavior of tangent measures. Such empirical distributions that appear by magnifying around a typical point are called tangent distributions. This is often the correct class of tangent objects to consider in questions that involve some notion of dimension. Indeed, the sequence along which a single tangent measure arises can be very sparse, and hence, such a measure might not give any information about the original object. 
The idea behind the scenery flow has been examined in many occasions. Authors have considered the scenery flow for specific sets and measures arising from dynamics; see e.g.\ \cite{BedfordFisher1996, BedfordFisher1997, BedfordFisherUrbanski2002, Zahle88}. M\"orters and Preiss \cite{MortersPreiss1998} proved the surprising fact that, when dealing with regular measures, the tangent distributions are Palm distributions, which are distributions with a strong degree of symmetry and translation invariance. Hochman \cite{Hochman2010} then showed that a similar phenomenon holds for all Radon measures: he proved that tangent distributions for any measure are almost everywhere quasi-Palm distributions, which is a weaker notion than Palm but still represents a strong spatial invariance. Hochman named distributions which are scale-invariant and enjoy the quasi-Palm property as fractal distributions.

It turns out that fractal distributions are well-suited to address problems concerning conical densities. The cones in question do not change under magnification and this allows to pass information between the original measure and its tangent distributions. In fact, we will show that, perhaps surprisingly, most of the known conical density results are, in some sense, a manifestation of rectifiability.

\section*{Acknowledgements}
These are the lecture notes of the course I gave in the IMPAN while I was a visiting professor in the Simons Semester ``Dynamical Systems'' in Banach Center during fall 2015. This work was partially supported by the  grant  346300 for IMPAN from the Simons Foundation and the matching 2015-2019 Polish MNiSW fund. I thank the students of the course for making numerous helpful remarks and the IMPAN for an excellent working environment.

\section{Conical densities} \label{sec:conical-densities}

We begin by reviewing a number of conical density results. The purpose of this section is to exhibit how conical densities can be studied by analytical methods.
The presentation here follows \cite{CsornyeiKaenmakiRajalaSuomala2010, Kaenmaki2010, KaenmakiSuomala2008, KaenmakiSuomala2004, Mattila1988, Salli1985}.

\subsection{Preliminaries}
For simplicity, we shall work on $\R^2$ which we equip with the Euclidean norm and the induced metric. The closed ball centered at $x \in \R^2$ with radius $r>0$ is denoted by $B(x,r)$ and the unit circle by $S^1$. For $x \in \R^2$, $\theta \in S^1$, $0<\alpha\le 1$, and $r>0$, we set
\begin{align*}
  X(x,r,\theta,\alpha) &= \{ y \in \R^2 : |\sin \sphericalangle (\theta,y-x)| < \alpha \} \cap B(x,r) \\
  &= \{ y \in \R^2 : |\proj_{\theta^\bot}(y-x)| < \alpha|y-x| \} \cap B(x,r).
\end{align*}
Here $\sphericalangle(\theta,y-x)$ is the opening angle between the vectors $\theta$ and $y-x$, and $\proj_{\theta^\bot}$ is the orthogonal projection onto the orthogonal complement of $\{ t\theta : t \in \R \}$. For notational convenience we identify $\theta \in S^1$ and the line $\{ t\theta : t \in \R \}$ determined by it.

The \emph{$s$-dimensional Hausdorff measure} $\HH^s$ is defined by
\begin{equation*}
  \HH^s(E) = \lim_{\delta \downarrow 0} \HH^s_\delta(E),
\end{equation*}
where
\begin{equation*}
  \HH^s_\delta(E) = \inf\biggl\{ \sum_i \diam(A_i)^s : E \subset \bigcup_i A_i \text{ and } \diam(A_i) \le \delta  \biggr\}.
\end{equation*}
We interpret $0^0 = 1$ and $\diam(\emptyset)^s = 0$ for all $s \in \R$. Without loss of generality, we may assume that the sets $A_i$ used in the definition are closed. It follows that $\HH^s$ is a Borel regular measure. It can be shown that $\HH^1$ is the length measure and $\HH^2$ is a constant multiple of the Lebesgue measure; in fact, $\HH^2(B(x,r)) = (2r)^2$ for all $x \in \R^2$ and $r>0$. Furthermore, if $E \subset \R^2$ and $0 \le s < t < \infty$, then $\HH^s(E) < \infty$ implies $\HH^t(E) = 0$. The \emph{Hausdorff dimension} of a set $E \subset \R^2$ is
\begin{equation*}
  \dimh(E) = \inf\{ s : \HH^s(E) < \infty \}.
\end{equation*}
The Hausdorff dimension of $E$ is thus the unique number for which $\HH^s(E) = \infty$ for all $s < \dimh(E)$ and $\HH^t(E) = 0$ for all $t > \dimh(E)$.

A set $E \subset \R^2$ is called \emph{rectifiable} if there are countably many Lipschitz maps $f_i \colon \R \to \R^2$ so that
\begin{equation*}
  \HH^1\Bigl( E \setminus \bigcup_i f_i(\R) \Bigr) = 0.
\end{equation*}
Observe that $\dimh(E) \le 1$ for all rectifiable sets $E$. A set $E \subset \R^2$ is \emph{purely unrectifiable} if $\HH^1(E \cap M) = 0$ for all rectifiable sets $M \subset \R^2$. Every set $E$ with $\HH^1(E) < \infty$ can be decomposed into rectifiable and purely rectifiable parts; see \cite[Theorem 15.6]{Mattila1995}. Although a rectifiable set $E$ with $\HH^1(E) < \infty$ can be dense (for example, consider the union of a line segment and points with rational coordinates), it has a measure-theoretical tangent at almost every point; see \cite[Theorem 15.19]{Mattila1995}.

\begin{lemma} \label{thm:lemma1}
  Let $E \subset \R^2$. If for every $x \in E$ there are $\theta \in S^1$, $0<\alpha<1$, and $r>0$ so that
  \begin{equation*}
    E \cap X(x,r,\theta,\alpha) = \emptyset,
  \end{equation*}
  then $E$ is rectifiable.
\end{lemma}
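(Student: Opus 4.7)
The plan is to decompose $E$ into countably many subsets, each lying in a single Lipschitz graph, and then read off rectifiability directly from the definition. The key observation is that if $x \in E$ has parameters $(\theta_x,\alpha_x,r_x)$ as in the hypothesis, then every $y \in E$ with $|y-x|<r_x$ must satisfy $|\sin\sphericalangle(\theta_x,y-x)|\ge\alpha_x$; in coordinates where $\theta_x$ is the vertical axis, this says that the $\theta_x$-component of $y-x$ is at most $\sqrt{1-\alpha_x^2}/\alpha_x$ times the $\theta_x^\bot$-component. Thus nearby points of $E$ sharing common cone data automatically lie on a Lipschitz graph over the line $\theta_x^\bot$.

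To globalize, first fix a countable dense sequence $\{\theta_k\}\subset S^1$ and define, for $(n,k,j)\in\N^3$,
\begin{equation*}
  E_{n,k,j} \;=\; \{x\in E : E \cap X(x,1/n,\theta_k,1/j)=\emptyset\}.
\end{equation*}
A continuity argument shows that $E=\bigcup_{n,k,j} E_{n,k,j}$: given $x$ with hypothesis data $(\theta_x,\alpha_x,r_x)$, pick $j$ with $1/j$ strictly less than $\alpha_x$, then $\theta_k$ sufficiently close to $\theta_x$ and $n$ large enough that $X(x,1/n,\theta_k,1/j)\subset X(x,r_x,\theta_x,\alpha_x)$, which is possible because the cones vary continuously in their parameters and the target cone is open.

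Next, refine each $E_{n,k,j}$ by intersecting with a countable cover of $\R^2$ by balls $\{B_\ell\}$ of radius $1/(2n)$, setting $F_{n,k,j,\ell}=E_{n,k,j}\cap B_\ell$. Any two points $x,y\in F_{n,k,j,\ell}$ satisfy $|y-x|\le 1/n$, so the cone condition at $x$ applies to $y$ and forces $|\sin\sphericalangle(\theta_k,y-x)|\ge 1/j$. By the coordinate computation from the first paragraph, $F_{n,k,j,\ell}$ is contained in the graph over $\theta_k^\bot$ of a function with Lipschitz constant $\sqrt{j^2-1}$; McShane extension produces a Lipschitz map $f_{n,k,j,\ell}\colon\R\to\R^2$ whose image contains $F_{n,k,j,\ell}$. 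Since $E=\bigcup_{n,k,j,\ell} F_{n,k,j,\ell}$ is a countable union, $\HH^1\bigl(E\setminus\bigcup_{n,k,j,\ell} f_{n,k,j,\ell}(\R)\bigr)=0$, which is exactly the definition of rectifiability.

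The mildly delicate step is the parameter-approximation in the second paragraph, but this is just a consequence of the openness of the cones; the rest is bookkeeping. Notably, no deeper rectifiability criterion (tangents, densities, projections) is required—the combinatorial cone-avoidance assumption forces a pointwise Lipschitz-graph structure on small enough pieces, and rectifiability follows from the countable-union clause in the definition.
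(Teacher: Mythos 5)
Your proof is correct and takes essentially the same approach as the paper: countably decompose $E$ into pieces sharing common cone parameters and having diameter below the cone radius, observe that each piece is a Lipschitz graph over $\theta^\bot$, and extend by McShane to a Lipschitz image of $\R$. The paper carries a fourth bookkeeping index to control the approximation of $(\theta_x,\alpha_x)$ and phrases the graph property as injectivity of $\proj_{\theta^\bot}$ with Lipschitz inverse (constant $\alpha^{-1}$), but these are cosmetic variants of your three-index decomposition and direct graph formulation with constant $\sqrt{j^2-1}$.
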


\begin{proof}
  Let $\{ \theta_i \}_{i \in \N}$ be dense on $S^1$, $\{ \alpha_j \}_{j \in \N}$ be dense on $(0,1)$, and $\{ r_k \}_{k \in \N}$ be dense on $(0,\infty)$. For each $x \in E$ denote the associated $\theta \in S^1$, $0<\alpha<1$, and $r>0$ given by the assumption of the lemma by $\theta_x$, $\alpha_x$, and $r_x$, respectively. Define
  \begin{equation*}
    E_{h,i,j,k} = \{ x \in E : |\theta_x-\theta_i| \le 1/h, \text{ } \alpha_j \le \alpha_x - 1/h, \text{ and } r_k \le r_x \}
  \end{equation*}
  for all $h,i,j,k \in \N$. Observe that $E = \bigcup_{h,i,j,k \in \N} E_{h,i,j,k}$. Moreover, if $x \in E_{h,i,j,k}$ and $y \in X(x,r_k,\theta_i,\alpha_j)$, then $|\theta_x-\theta_i| \le 1/h$ and $\alpha_j \le \alpha_x - 1/h$ imply that
  \begin{equation*}
    \frac{|\proj_{\theta_x^\bot}(y-x)|}{|y-x|} \le \frac{|\proj_{\theta_i^\bot}(y-x)|}{|y-x|} + 1/h < \alpha_j + 1/h \le \alpha_x.
  \end{equation*}
  Furthermore, since $r_k \le r_x$ we see that $y \in X(x,r_x,\theta_x,\alpha_x)$. Thus, by the assumption, for each $x \in E_{h,i,j,k}$ we have $E \cap X(x,r_k,\theta_i,\alpha_j) = \emptyset$.

  The above reasoning shows that, by expressing $E$ suitably as a countable union, we may assume that $\theta$, $\alpha$, and $r$ do not depend on $x$. Furthermore, since $E$ is a countable union of sets whose diameters are less than $r$ we may assume that $\diam(E)<r$. If $a \in E$ and $b \in \R^2$ is such that $|\proj_{\theta^\bot}(b-a)| < \alpha|b-a|$, then $b \in X(a,r,\theta,\alpha)$. Thus, by the assumption, such a point $b$ cannot be contained in $E$. Therefore,
  \begin{equation*}
    |\proj_{\theta^\bot}(b) - \proj_{\theta^\bot}(a)| \ge \alpha|b-a|
  \end{equation*}
  for all $a,b \in E$. This means that the map $\proj_{\theta^\bot}|_E$ is one-to-one and its inverse $f = (\proj_{\theta^\bot}|_E)^{-1}$ satisfies
  \begin{equation*}
    |f(x)-f(y)| \le \alpha^{-1}|x-y|
  \end{equation*}
  for all $x,y \in \proj_{\theta^\bot}(E)$. Identifying $\theta^\bot$ with $\R$ we see that $\proj_{\theta^\bot}(E) \subset \R$. Let $f = (f_1,f_2)$ and define
  \begin{equation*}
    g_i(x) = \inf\{ f_i(y) + \alpha^{-1}|x-y| : y \in \proj_{\theta^\bot}(E) \}
  \end{equation*}
  for both $i \in \{ 1,2 \}$. It is now easy to see that $g=(g_1,g_2) \colon \R \to \R^2$ satisfies $g|_{\proj_{\theta^\bot}(E)} = f$ and
  \begin{equation*}
    |g(x)-g(y)| \le \sqrt{2}\alpha^{-1}|x-y|
  \end{equation*}
  for all $x,y \in \R$. Since $E = g(\proj_{\theta^\bot}(E))$ we conclude that $E$ is rectifiable.
\end{proof}

It follows from the Besicovitch density theorem (see e.g.\ \cite[Theorem 2.14(1)]{Mattila1995}) that if $E \subset \R^2$, then
\begin{equation*}
  \lim_{r \downarrow 0} \frac{\HH^2(E \cap B(x,r))}{(2r)^2} = 1
\end{equation*}
for $\HH^2$-almost all $x \in E$. Much less can be said for the $s$-dimensional Hausdorff measure when $s<2$.

\begin{theorem} \label{thm:besicovitch}
  If $E \subset \R^2$ satisfies $\HH^s(E) < \infty$, then
  \begin{equation*}
    2^{-s} \le \limsup_{r \downarrow 0} \frac{\HH^s(E \cap B(x,r))}{(2r)^s} \le 1
  \end{equation*}
  for $\HH^s$-almost all $x \in E$.
\end{theorem}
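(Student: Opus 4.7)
The plan is to establish each inequality by showing that the corresponding exceptional set has $\HH^s$-measure zero, using that the restriction of $\HH^s$ to $E$ is a finite Borel regular (hence Radon) measure and leveraging the covering structure built into the definition of $\HH^s$. The two bounds have a similar flavour but are dual in character: the upper bound comes from a Vitali-type covering of points of high density, while the lower bound comes from inflating an almost optimal $\delta$-cover to balls centred at cover points.

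For the upper bound I would fix $t>1$, define the exceptional set
\begin{equation*}
  A_t = \Bigl\{ x \in E : \limsup_{r \downarrow 0} \frac{\HH^s(E \cap B(x,r))}{(2r)^s} > t \Bigr\},
\end{equation*}
and use outer regularity to enclose $A_t$ in an open set $U$ with $\HH^s(E \cap U)$ close to $\HH^s(A_t)$. For each $x \in A_t$, the balls $B(x,r) \subset U$ with $r < \delta$ and $\HH^s(E \cap B(x,r)) > t(2r)^s$ exist for arbitrarily small $r$, so they form a fine cover of $A_t$. A Vitali covering theorem for Radon measures then extracts a countable disjoint subfamily $\{B(x_i,r_i)\}_i$ covering $A_t$ up to a $\HH^s$-null set. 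Since this subfamily is simultaneously a $2\delta$-cover of $A_t$, summing the density inequality gives
\begin{equation*}
  \HH^s_{2\delta}(A_t) \le \sum_i (2r_i)^s < \frac{1}{t}\sum_i \HH^s(E \cap B(x_i,r_i)) \le \frac{\HH^s(A_t) + \eps}{t}.
\end{equation*}
Letting first $\delta \downarrow 0$ and then $\eps \downarrow 0$ yields $\HH^s(A_t) \le \HH^s(A_t)/t$; since $t>1$ and $\HH^s(A_t) \le \HH^s(E) < \infty$, this forces $\HH^s(A_t) = 0$, and a countable union over $t = 1 + 1/k$ gives the upper bound.

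For the lower bound I would fix $t < 2^{-s}$ and define $B_t$ analogously with the reverse strict inequality. Each $x \in B_t$ satisfies $\HH^s(E \cap B(x,r)) < t(2r)^s$ for all $r$ below some threshold $\roo_x > 0$, so I would stratify $B_t = \bigcup_n B_t^n$ with $B_t^n = \{x \in B_t : \roo_x > 1/n\}$ and work at a single level. Given a near-optimal $\delta$-cover $\{A_i\}$ of $B_t^n$ with $\delta < 1/n$ and each $A_i$ meeting $B_t^n$, I would pick $x_i \in A_i \cap B_t^n$ and observe $A_i \subset B(x_i, \diam(A_i))$ with $\diam(A_i) < \roo_{x_i}$. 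The density inequality on these balls produces
\begin{equation*}
  \HH^s(B_t^n) \le \sum_i \HH^s(E \cap B(x_i,\diam(A_i))) < t \cdot 2^s \sum_i \diam(A_i)^s < t \cdot 2^s (\HH^s(B_t^n) + \eps),
\end{equation*}
where the factor $2^s$ is precisely the loss incurred in passing from a set of diameter $d$ to a ball of radius $d$. Since $t \cdot 2^s < 1$ and $\eps$ is arbitrary, this forces $\HH^s(B_t^n) = 0$, so $\HH^s(B_t) = 0$ by continuity of measure from below, and a union over $t = 2^{-s} - 1/k$ completes the proof.

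The hardest ingredient will be the Vitali covering theorem for the restriction of $\HH^s$ to $E$ required in the upper bound, together with the measurability and outer regularity care needed to compare $\HH^s(E \cap U)$ with $\HH^s(A_t)$. The lower bound is essentially elementary once one notices the trick of inflating covering sets into balls: a set of diameter $d$ meeting the target set sits inside a closed ball of radius $d$ centred in the target set, and the resulting factor $2^s$ in the comparison of $\HH^s$-measures accounts exactly for the constant $2^{-s}$ in the statement.
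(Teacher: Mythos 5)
Your proposal is correct and follows essentially the same route as the paper: for the upper bound, a Vitali-type covering by balls of high density inside an open neighbourhood of the exceptional set, compared against the pre-measure $\HH^s_\delta$; for the lower bound, inflating a near-optimal $\delta$-cover to balls $B(x_i,\diam(A_i))$ centred at points of the exceptional set and using the density hypothesis there. The only cosmetic differences are the parametrization of the exceptional sets (your $t<2^{-s}$ with stratification by $\roo_x>1/n$ versus the paper's $(1-\tfrac{1}{k})r^s$ threshold) and a factor-of-two bookkeeping in the cover diameter for the Vitali step.
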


\begin{proof}
  Let us first prove the left-hand side of the inequality. The proof is basically just the definition of the Hausdorff measure. Let
  \begin{equation*}
    A = \{ x \in E : \limsup_{r \downarrow 0} \frac{\HH^s(E \cap B(x,r))}{(2r)^s} < 2^{-s} \}
  \end{equation*}
  and observe that $A = \bigcup_{k \in \N} A_k$, where
  \begin{equation*}
    A_k = \{ x \in A : \HH^s(E \cap B(x,r)) \le (1-\tfrac{1}{k})r^s \text{ for all } 0<r<\tfrac{1}{k} \}
  \end{equation*}
  for all $k \in \N$. It suffices to show that $\HH^s(A_k)=0$ for all $k \in \N$. Fix $k \in \N$ and let $\eps>0$. Let $\{ U_i \}$ be a $\tfrac{1}{k}$-cover of $A_k$ such that $A_k \cap U_i \ne \emptyset$ for all $i$ and
  \begin{equation*}
    \sum_i \diam(U_i)^s \le \HH^s(A_k) + \eps.
  \end{equation*}
  For each $i$ choose $x_i \in A_k \cap U_i$. Since
  \begin{align*}
    \HH^s(A_k) &\le \sum_i \HH^s(A_k \cap U_i) \le \sum_i \HH^s(E \cap B(x_i,\diam(U_i))) \\
    &\le (1-\tfrac{1}{k})\sum_i \diam(U_i)^s \le (1-\tfrac{1}{k})(\HH^s(A_k)+\eps)
  \end{align*}
  we get, by letting $\eps \downarrow 0$, that $\HH^s(A_k) \le (1-\tfrac{1}{k})\HH^s(A_k)$. This is possible only when $\HH^s(A_k)=0$.

  Let us then prove the right-hand side of the inequality. The proof is based on the Vitali's covering theorem. Since the Hausdorff measure is Borel regular we may assume that $E$ is a Borel set. Let $t>1$ and define
  \begin{equation*}
    A = \{ x \in E : \limsup_{r \downarrow 0} \frac{\HH^s(E \cap B(x,r))}{(2r)^s} > t \}
  \end{equation*}
  As above, it suffices to show that $\HH^s(A)=0$. Let $\eps>0$ and choose an open set $U \supset A$ such that $\HH^s(E \cap U) \le \HH^s(U) < \HH^s(A)+\eps$. Let $\delta>0$ and observe that for every $x \in A$ there are arbitrary small numbers $r$ such that $0<r<\delta/2$, $B(x,r) \subset U$, and
  \begin{equation} \label{eq:besi-estimate}
    \HH^s(E \cap B(x,r)) > t(2r)^s.
  \end{equation}
  From these balls, by Vitali's covering theorem (see e.g.\ \cite[Theorem 2.8]{Mattila1995}), we may choose pairwise disjoint balls $B_1,B_2,\ldots$ such that $\HH^s(A \setminus \bigcup_i B_i) = 0$. Since the same holds also for $\HH^s_\delta$, the sub-additivity of $\HH^s_\delta$ and \eqref{eq:besi-estimate} imply that
  \begin{align*}
    \HH^s_\delta(A) &\le \HH^s_\delta\Bigl( A \cap \bigcup_i B_i \Bigr) \le \sum_i \HH^s_\delta(B_i) \le \sum_i \diam(B_i)^s \\
    &< t^{-1}\sum_i \HH^s(E \cap B_i) \le t^{-1}\HH^s(E \cap U) < t^{-1}(\HH^s(A) + \eps).
  \end{align*}
  Therefore, by letting $\delta \downarrow 0$ and $\eps \downarrow 0$, we see that $\HH^s(A) \le t^{-1}\HH^s(A)$. This is possible only when $\HH^s(A)=0$.
\end{proof}

\subsection{Dimension and conical densities}
By Theorem \ref{thm:besicovitch}, there are arbitrary small radii $r$ so that $\HH^s(E \cap B(x,r)) \approx r^s$. The question we are now interested in is that how the set $E$ is distributed in such balls? Here and hereafter $\lesssim$ means that the inequality involves a multiplicative constant which we do not care about. Of course, $a \approx b$ means that $a \lesssim b$ and $b \lesssim a$.

\begin{theorem} \label{thm:salli}
  If $1 < s \le 2$ and $0< \alpha \le 1$, then there is a constant $\eps = \eps(s,\alpha)>0$ satisfying the following: For every $E \subset \R^2$ with $\HH^s(E)<\infty$ and for each $\theta \in S^1$ it holds that
  \begin{equation*}
    \limsup_{r \downarrow 0} \frac{\HH^s(E \cap X(x,r,\theta,\alpha))}{(2r)^s} \ge \eps
  \end{equation*}
  for $\HH^s$-almost all $x \in E$.
\end{theorem}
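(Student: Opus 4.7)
The strategy is to argue by contradiction via Lemma~\ref{thm:lemma1}. Fix $\alpha' \in (0,\alpha)$ and a small $\eps = \eps(s,\alpha) > 0$ to be chosen at the end. Suppose that there exist $E \subset \R^2$ with $\HH^s(E) < \infty$, $\theta \in S^1$, and a set $A \subset E$ with $\HH^s(A) > 0$ on which
\[
  \limsup_{r \downarrow 0} \frac{\HH^s(E \cap X(x,r,\theta,\alpha))}{(2r)^s} < \eps.
\]
The goal is to deduce that $A$ must be rectifiable up to a $\HH^s$-null set; since $s > 1$ and every rectifiable set has $\dimh \le 1$, this will force $\HH^s(A) = 0$, contradicting the choice of $A$.

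A routine Egorov-type reduction lets me replace $A$ by a subset of positive $\HH^s$-measure on which the cone bound $\HH^s(E \cap X(x,r,\theta,\alpha)) \le \eps(2r)^s$ holds uniformly for every $x \in A$ and every $0 < r \le r_0$, for some fixed $r_0 > 0$. Simultaneously, Theorem~\ref{thm:besicovitch} supplies the pointwise lower bound $2^{-s}$ on the upper density of $\HH^s \llcorner E$ at $\HH^s$-almost every point of $A$.

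The geometric core is a density-contradiction argument. An elementary inclusion shows that there is a constant $c = c(\alpha,\alpha') > 0$ such that whenever $x \in A$ and $y \in X(x,r,\theta,\alpha')$ with $|y-x| \in [r/4, r/2]$, the ball $B(y,cr)$ is contained in $X(x,r,\theta,\alpha)$. Consequently $\HH^s(E \cap B(y,cr)) \le \eps(2r)^s$, so the $\HH^s$-density of $E$ at $y$ at the specific scale $cr$ is at most $\eps(2/c)^s$. For $\eps$ sufficiently small compared to $2^{-s}c^s$, this bound is incompatible with the upper-density lower bound of Theorem~\ref{thm:besicovitch}, but only at one scale at a time, so the offending $y$ is not immediately killed. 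To upgrade this to a null-set statement, I would run a Vitali/$5r$-covering at a sequence of dyadic scales $r_n \downarrow 0$: at each scale the set of offending points $y$ can be covered by balls whose total $\HH^s$-content is controlled by $\eps$ times a combinatorial factor, and summing the bounds over $n$ via a Borel--Cantelli-type argument yields a Borel set $Z \subset E$ with $\HH^s(Z) = 0$.

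After removing $Z$, at $\HH^s$-almost every $x \in A$ the set $E \setminus Z$ satisfies $(E \setminus Z) \cap X(x, r_x, \theta, \alpha') = \emptyset$ for some $r_x > 0$. A countable decomposition of $A \setminus Z$ by the parameters $r_x$, exactly as in the proof of Lemma~\ref{thm:lemma1}, then applies and gives that $A \setminus Z$ is rectifiable, completing the contradiction. The main obstacle is the null-set step: a single-scale Vitali argument only removes a set of $\HH^s$-mass proportional to $\eps$, so producing a genuinely null exceptional set demands a careful quantitative balance between $\eps$, the geometric gap $\alpha - \alpha'$, the exponent $s$, and the density lower bound $2^{-s}$ coming from Theorem~\ref{thm:besicovitch}.
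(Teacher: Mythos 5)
Your proposal takes a genuinely different route from the paper, and as described it contains a gap that I don't think is a mere technicality.

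The paper's proof is a direct ``tube argument'': assume for contradiction a uniform cone bound on a positive-measure set $A$, use the lower density bound of Theorem~\ref{thm:besicovitch} to produce one ball $B(x,r)$ with $\HH^s(A\cap B(x,r))\gtrsim r^s$, cover $(\{x\}+\theta^\bot)\cap B(x,r)$ by $\approx\delta^{-1}$ disks of radius $\delta r$ and pigeonhole to find a thin tube carrying mass $\gtrsim\delta r^s$, then show that the cone $X(z,2r,\theta,\alpha)$ at a well-chosen point $z$ in the tube misses only a small ball $B(z,tr)$ with $t=\delta/\alpha$ whose mass is $\lesssim t^s r^s$. Since $s>1$, for $\delta$ small $\delta-t^s\gtrsim\delta$, forcing $\eps\gtrsim\delta$. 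There is no rectifiability in this argument; $s>1$ enters only through the exponent comparison $\delta^{s-1}/\alpha^s\to 0$.

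Your plan replaces this with an appeal to Lemma~\ref{thm:lemma1}: show that after deleting a null set $Z$, the set misses a cone at each of its points, conclude rectifiability and hence $\dimh\le 1$, contradicting $s>1$. The problem is the null-set step, and it is not merely a matter of ``careful quantitative balance.'' The density bound you derive at a point $y$ in the cone of $x$ holds at a single scale $cr$ depending on $|y-x|$. Theorem~\ref{thm:besicovitch} supplies only a \emph{limsup} lower bound on the density at $y$: it says there are arbitrarily small scales at which the density is $\ge 2^{-s}$, but it says nothing to forbid the density from being tiny at the particular scale $cr$. So a point $y\in E$ with small density at scale $cr$ is not automatically in a $\HH^s$-null set, and no Vitali covering at a sequence of dyadic scales with a Borel--Cantelli summation will make it one, because the mass estimates you obtain at each scale are of order $\eps$ times a constant and do not decay in $n$ -- the series does not converge. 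More fundamentally, under your contradiction hypothesis the cone $X(x,r,\theta,\alpha)$ typically still contains a subset of $E$ of positive (just small) $\HH^s$-measure; no null set can be removed to make the intersection empty, so the reduction to Lemma~\ref{thm:lemma1} cannot succeed.

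It is instructive that the paper \emph{does} eventually prove conical density results as a ``manifestation of rectifiability'' (Theorem~\ref{thm:theorem25}), but this requires the scenery-flow machinery: one passes from a limsup statement about the original measure to an almost-sure statement about ergodic components of a tangent distribution, and it is only at that level, using the quasi-Palm property, that Lemma~\ref{thm:lemma1} can be applied. Without that translation, the limsup in Theorem~\ref{thm:besicovitch} is too weak to support a direct rectifiability argument, which is why the paper's proof of Theorem~\ref{thm:salli} is a purely analytic single-scale estimate rather than a rectifiability argument.
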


\begin{proof}
  Let $\eps>0$ and assume that there are $A \subset E$ with $\HH^s(A)>0$ and $\theta \in S^1$ such that
  \begin{equation} \label{eq:salli1}
    \HH^s(A \cap X(x,r,\theta,\alpha)) < \eps r^s
  \end{equation}
  for all $x \in A$ and $0<r<r_0$. According to Theorem \ref{thm:besicovitch}, there exist $x \in A$ and $0<r<r_0/2$ such that
  \begin{equation} \label{eq:salli2}
    \HH^s(A \cap B(x,r)) \gtrsim r^s.
  \end{equation}
  Let $0<\delta <1$. The set $(\{x\} + \theta^\bot) \cap B(x,r)$ can be covered by approximately $\delta^{-1}$ many balls of radius $\delta r$. Thus, by \eqref{eq:salli2}, there exists $y \in (\{x\} + \theta^\bot) \cap B(x,r)$ for which
  \begin{equation} \label{eq:salli3}
    \HH^s(A \cap B(x,r) \cap \proj_{\theta^\bot}^{-1}(B(y,\delta r))) \gtrsim \delta r^s.
  \end{equation}
  Let $z \in A \cap B(x,r) \cap \proj_{\theta^\bot}^{-1}(B(y,\delta r))$ and $t = \delta/\alpha$. Then
  \begin{equation*}
    \proj_{\theta^\bot}^{-1}(B(y,\delta r)) \setminus B(z,tr) \subset X(z,2r,\theta,\alpha)
  \end{equation*}
  and, by \eqref{eq:salli3} and Theorem \ref{thm:besicovitch},
  \begin{align*}
    \HH^s(E \cap X(z,2r,\theta,\alpha)) &\ge \HH^s(A \cap B(x,r) \cap \proj_{\theta^\bot}^{-1}(B(y,\delta r)) \setminus B(z,tr)) \\
    &\gtrsim \delta r^s - t^sr^s = \delta\Bigl( 1-\frac{\delta^{s-1}}{\alpha^s} \Bigr)r^s \gtrsim \delta r^s.
  \end{align*}
  The last inequality holds since we may choose $\delta>0$ in the beginning so small that $1 - \delta^{s-1}/\alpha^s > 0$. Note that we can do this even if $\delta^{s-1}/\alpha^s$ is multiplied by a constant. By \eqref{eq:salli1}, we must have $\eps \gtrsim \delta$.
\end{proof}

In Theorem \ref{thm:salli} we effectively assume that $\dimh(E)>1$; otherwise the statement is empty. This condition guarantees that the set is scattered enough. If $\dimh(E) \le 1$, then $E$ can be rectifiable. In fact, any set of upper Minkowski dimension strictly less than $1$ can be covered by a single Lipschitz curve; see e.g.\ \cite[Lemma 3.1]{BalkaHarangi2014}. The upper Minkowski dimension is always at least the Hausdorff dimension; see e.g.\ \cite[\S 5.3]{Mattila1995} for the definition and basic properties. If $E$ is rectifiable with $\HH^1(E)<\infty$, then the claim of Theorem \ref{thm:salli} does not hold; see \cite[Theorem 15.19]{Mattila1995}.

The following theorem significantly improves Theorem \ref{thm:salli}. It shows that there are arbitrary small scales so that almost all points of $E$ are well surrounded by $E$.

\begin{theorem} \label{thm:mattila}
  If $1<s\le 2$ and $0<\alpha\le 1$, then there is a constant $\eps=\eps(s,\alpha)>0$ satisfying the following: For every $E \subset \R^2$ with $\HH^s(E)<\infty$ it holds that
  \begin{equation*}
    \limsup_{r \downarrow 0} \inf_{\theta \in S^1} \frac{\HH^s(E \cap X(x,r,\theta,\alpha))}{(2r)^s} \ge \eps
  \end{equation*}
  for $\HH^s$-almost all $x \in E$.
\end{theorem}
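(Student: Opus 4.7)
The plan is to argue by contradiction, reducing the uniform-in-$\theta$ statement to the fixed-$\theta$ case already covered by Theorem~\ref{thm:salli} via a pigeonhole over a discrete net of directions. Suppose the claim fails; after restricting to a positive-measure subset by countable exhaustion, I may fix $A \subset E$ with $\HH^s(A) > 0$ and $r_0 > 0$ such that for every $x \in A$ and every $0 < r < r_0$ there is $\theta(x,r) \in S^1$ with $\HH^s(E \cap X(x,r,\theta(x,r),\alpha)) < \eps(2r)^s$. My first move is to replace the continuous bad direction by a discrete one: choose $\delta = \delta(\alpha) > 0$ and a net $\{\theta_1,\ldots,\theta_N\} \subset S^1$ with $N \lesssim 1/\alpha$ so that for each $\theta \in S^1$ some $\theta_i$ satisfies $X(x,r,\theta_i,\alpha/2) \subset X(x,r,\theta,\alpha)$. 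This yields a discrete bad index $i(x,r) \in \{1,\ldots,N\}$ with $\HH^s(E \cap X(x,r,\theta_{i(x,r)},\alpha/2)) < \eps(2r)^s$ for all $x \in A$ and $r < r_0$.

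The second step is a double pigeonhole over the finitely many indices. For each $x \in A$, some value $i_0(x) \in \{1,\ldots,N\}$ is attained by $r \mapsto i(x,r)$ on a set of scales of positive lower logarithmic density at $0$. Pigeonholing once more over $x$, I pass to a measurable subset $A' \subset A$ with $\HH^s(A') > 0$ and a fixed index $i_0$ such that, for every $x \in A'$, the bad-scale set $B(x) = \{r < r_0 : i(x,r) = i_0\}$ accumulates at $0$ with positive logarithmic density. The failure is now reduced to a single direction $\theta_{i_0}$ on an abundant set of points and scales, which opens the door to adapting the argument of Theorem~\ref{thm:salli}.

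By Theorem~\ref{thm:besicovitch} applied to $A'$, for $\HH^s$-almost every $x \in A'$ there are arbitrarily small scales with $\HH^s(A' \cap B(x,r)) \gtrsim r^s$; the positive density of $B(x)$ then lets me pair a Besicovitch-good scale with a comparable bad scale, absorbing the bounded multiplicative gap into the constants. At such a coincident scale, the strip-pigeonhole construction from the proof of Theorem~\ref{thm:salli}, run with direction $\theta_{i_0}$ and parameter $\alpha/2$, produces a companion point $z \in A' \cap B(x,r)$ and a constant $\delta' = \delta'(\alpha) > 0$ with $\HH^s(E \cap X(z,2r,\theta_{i_0},\alpha/2)) \gtrsim \delta' r^s$. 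The main obstacle is the final synchronization: I need $(z,2r)$ itself to be a bad pair, i.e.\ $i(z,2r) = i_0$, which would yield $\HH^s(E \cap X(z,2r,\theta_{i_0},\alpha/2)) < \eps(2r)^s$ and force $\eps \gtrsim \delta'$, contradicting the choice of $\eps = \eps(s,\alpha)$ sufficiently small. I would arrange this by selecting $x$ as a Lebesgue density point of $A'$ in $E$ and using a Vitali-type covering to force the pigeonholed strip to intersect the bad-pair set $\{(y,r') : y \in A', \, i(y,r') = i_0\}$ at the required scale $2r$. Making this scale-and-direction matching precise, so that the strip argument lands back inside the positive-density set of bad pairs, is the chief technical difficulty of the proof; once it is in place, all remaining estimates are direct adaptations of the computations in Theorem~\ref{thm:salli}.
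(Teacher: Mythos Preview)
Your overall strategy---discretize $S^1$ and reduce to the fixed-direction case of Theorem~\ref{thm:salli}---is exactly the paper's. But you have manufactured the ``chief technical difficulty'' by doing the pigeonholing in the wrong order. You first fix a direction index $i_0$ (via your log-density pigeonhole over scales), and only afterwards pick the Besicovitch scale $r$ and the strip point $z$; this leaves you needing $i(z,2r)=i_0$, which nothing you have arranged guarantees. The Lebesgue-density/Vitali patch you sketch is vague, and it is not clear it can be made to work: you would need the set of pairs $(y,r')$ with $i(y,r')=i_0$ to be dense enough in both space and scale simultaneously, and positive log-density in $r$ for each $x$ separately does not give that.

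The paper sidesteps the issue entirely by reversing the order. First apply Theorem~\ref{thm:besicovitch} to $A$ to fix $x$ and a single scale $r<r_0/2$ with $\HH^s(A\cap B(x,r))\gtrsim r^s$. \emph{Only then} pigeonhole over the $k$ direction indices, using the bad index at the already-chosen scale $2r$: since every $z\in A\cap B(x,r)$ has some $i(z,2r)\in\{1,\ldots,k\}$, one index $i$ captures a subset $A_i\subset A\cap B(x,r)$ with $\HH^s(A_i)\gtrsim k^{-1}r^s$. Now run the strip argument of Theorem~\ref{thm:salli} on $A_i$ in direction $\theta_i$. The point $z$ it produces lies in $A_i$, so \emph{by construction} $\theta_i$ is bad at $(z,2r)$, and the contradiction $\eps\gtrsim\delta$ follows immediately. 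No synchronization is needed; your log-density step and the proposed Vitali argument can both be deleted.
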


\begin{proof}
  Since $S^1$ is compact there are $\theta_1,\ldots,\theta_k \in S^1$ such that $S^1 \subset \bigcup_{i=1}^k X(0,1,\theta_i,\alpha/2)$. Observe that for every $\theta \in S^1$ there exists $i \in \{ 1,\ldots,k \}$ such that
  \begin{equation*}
    X(0,1,\theta_i,\alpha/2) \subset X(0,1,\theta,\alpha).
  \end{equation*}
  Let $\eps>0$ and assume that there is $A \subset E$ with $\HH^s(A)>0$ so that for every $x \in A$ and $0<r<r_0$ there exists $i \in \{ 1,\ldots,k \}$ such that
  \begin{equation*}
    \HH^s(A \cap X(x,r,\theta_i,\alpha/2)) < \eps r^s.
  \end{equation*}
  Let
  \begin{equation*}
    A_i = \{ z \in A : \HH^s(A \cap X(z,r,\theta_i,\alpha/2)) < \eps r^s \text{ for some } 0<r<r_0 \}.
  \end{equation*}
  Note that $0<r<r_0$ above can be chosen arbitrary small. Since $A = \bigcup_{i=1}^k A_i$, Theorem \ref{thm:besicovitch} implies that there are $i \in \{ 1,\ldots,k \}$, $x \in A_i$, and $0<r<r_0/2$ such that
  \begin{equation*}
    \HH^s(A_i \cap B(x,r)) \gtrsim k^{-1}r^s.
  \end{equation*}
  Now we can proceed as in the proof of Theorem \ref{thm:salli}.
\end{proof}

The previous theorem can be naturally generalized to packing measures. The \emph{$s$-dimensional packing measure} $\PP^s$ is defined by
\begin{equation*}
  \PP^s(E) = \inf\biggl\{ \sum_i P^s(E_i) : \{ E_i \}_i \text{ is a countable partition of } E \biggr\},
\end{equation*}
where $P^s(E) = \lim_{\delta \downarrow 0} P^s_\delta(E)$ and
\begin{align*}
  P^s_\delta(E) = \sup\biggl\{ \sum_i \diam(B_i)^s : \;&\{ B_i \}_i \text{ is a mutually disjoint countable collection} \\ &\text{of closed balls centered at $E$ with } \diam(B_i) \le \delta  \biggr\}.
\end{align*}
The packing measure is Borel regular. It holds that $\HH^s(E) \le \PP^s(E)$ for all $A \subset \R^2$; see \cite[Theorem 5.12]{Mattila1995}. Furthermore, if $E \subset \R^2$ is compact with $P^s(E) < \infty$, then $\PP^s(E) = P^s(E)$; see \cite[Theorem 2.3]{FengHuaWen1999}. The \emph{packing dimension} of a set $E \subset \R^2$ is $\dimp(E) = \inf\{ s : \PP^s(E) < \infty \}$.

\begin{theorem} \label{thm:packing}
  If $1<s\le 2$ and $0<\alpha\le 1$, then there is a constant $\eps=\eps(s,\alpha)>0$ satisfying the following: For every $E \subset \R^2$ with $\PP^s(E)<\infty$ it holds that
  \begin{equation*}
    \limsup_{r \downarrow 0} \inf_{\theta \in S^1} \frac{\PP^s( E \cap X(x,r,\theta,\alpha) )}{(2r)^s} \ge \eps \limsup_{r \downarrow 0} \frac{\PP^s(E \cap B(x,r))}{(2r)^s}
  \end{equation*}
  for $\PP^s$-almost every $x \in E$.
\end{theorem}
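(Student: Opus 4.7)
My plan is to mirror the proof of Theorem \ref{thm:mattila}, substituting a packing measure analog of Theorem \ref{thm:besicovitch} for the Hausdorff density bound. Concretely I will use that, under $\PP^s(E) < \infty$, the upper packing density $d(x) := \limsup_{r \downarrow 0} \PP^s(E \cap B(x,r))/(2r)^s$ satisfies $2^{-s} \le d(x) \le 1$ for $\PP^s$-a.e. $x \in E$. The upper bound supplies the uniform ball control that was automatic for $\HH^s$ in Theorem \ref{thm:salli}, and the lower bound plays the role of the Besicovitch density in driving the contradiction.

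Assume for contradiction that the conclusion fails for some $\eps > 0$ on a Borel set $A \subset E$ of positive packing measure. After restricting I may assume $d(x) \le C$ and $r_0(x) \ge r_0$ uniformly on $A$, so that for every $x \in A$ and $0 < r < r_0$ some $\theta(x,r) \in S^1$ realizes $\PP^s(E \cap X(x,r,\theta(x,r),\alpha)) < \eps C (2r)^s$. Choose $\theta_1,\ldots,\theta_k \in S^1$ with $S^1 \subset \bigcup_{i=1}^k X(0,1,\theta_i,\alpha/2)$ and $X(x,r,\theta_i,\alpha/2) \subset X(x,r,\theta,\alpha)$ whenever $\theta \in X(0,1,\theta_i,\alpha/2) \cap S^1$. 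A double pigeonhole argument, over the $k$ discrete directions and over the density-large radii supplied by the packing density theorem, produces for $\PP^s$-a.e. $x \in A$ a single index $j(x) \in \{1,\ldots,k\}$ and a sequence $r_n(x) \downarrow 0$ along which both $\PP^s(A \cap B(x,r_n)) \gtrsim r_n^s$ and $\PP^s(E \cap X(x,r_n,\theta_{j(x)},\alpha/2)) < \eps C(2r_n)^s$ hold. Partitioning $A = \bigsqcup_j A_j$ by $j(x)$ and relabeling, some $A_1$ has positive packing measure.

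At a typical $x \in A_1$ and $r = r_n(x)$, I would run the slab argument from the proof of Theorem \ref{thm:salli}: cover the diameter $(\{x\} + \theta_1^\perp) \cap B(x,r)$ by $\lfloor 1/\delta \rfloor$ balls of radius $\delta r$, pigeonhole over their $\proj_{\theta_1^\perp}$-preimages to isolate one slab carrying packing mass $\gtrsim \delta r^s$ in $B(x,r) \cap A$, select $z$ in that intersection, and observe that the slab minus $B(z,tr)$ lies in $X(z,2r,\theta_1,\alpha/2)$ for $t = 4\delta/\alpha$. The upper density estimate $\PP^s(E \cap B(z,tr)) \lesssim (tr)^s$ then disposes of the excluded ball, yielding $\PP^s(A \cap X(z,2r,\theta_1,\alpha/2)) \gtrsim \delta r^s(1 - O(\delta^{s-1}/\alpha^s)) \gtrsim \delta r^s$ once $\delta$ is chosen small in terms of $s, \alpha$. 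If $z$ is selected so that $\theta_1$ is the bad direction at scale $2r$ for $z$, then $\PP^s(E \cap X(z,2r,\theta_1,\alpha/2)) < \eps C(4r)^s$, forcing $\eps \gtrsim \delta$, a positive constant depending only on $s$ and $\alpha$, which contradicts the initial choice of $\eps$ below this threshold.

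The main obstacle is making the contradiction bite at the specific direction $\theta_1$ and scale $2r$: the contradiction hypothesis supplies only \emph{some} bad direction at each $(z',r')$, not necessarily $\theta_1$ at $(z,2r)$. I would handle this by choosing $x$ to be a Lebesgue density point of $A_1$ with respect to the Radon measure $\PP^s|_E$, so that a fixed fraction of the $A$-mass in the slab already lies in $A_1$ and $z$ can be picked inside $A_1$. To then force $j(z,2r) = 1$ in a measurable manner, I would strengthen the earlier pigeonhole into a joint Vitali-type selection that synchronizes the cone-bad sequence of $x$ at $\theta_1$ with those of a positive-measure collection of points of $A_1$ at scale $2r$. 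This is the only place where the packing setting differs delicately from Theorem \ref{thm:mattila}; the rest of the argument is a direct transcription.
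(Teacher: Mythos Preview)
The paper does not actually prove this theorem. Immediately after the statement it records the packing density fact
\[
  \liminf_{r\downarrow 0}\frac{\PP^s(E\cap B(x,r))}{(2r)^s}=1
\]
for $\PP^s$-almost every $x\in E$, remarks that ``the challenge in the proof of Theorem~\ref{thm:packing} comes from the fact that the scales obtained from different estimates do not necessarily match,'' and then writes ``We omit the proof but the interested reader is referred to the survey \cite{Kaenmaki2010}.'' So there is nothing in the paper to compare your argument against beyond that one-sentence diagnosis of the difficulty.

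That said, your setup contains a genuine error. You assert that the \emph{upper} packing density $d(x)=\limsup_{r\downarrow 0}\PP^s(E\cap B(x,r))/(2r)^s$ satisfies $2^{-s}\le d(x)\le 1$ almost everywhere, in analogy with Theorem~\ref{thm:besicovitch}. For packing measure the density theorem controls the $\liminf$, not the $\limsup$; there is no bound $d(x)\le 1$ in general, and $d(x)$ need not even be finite. This is exactly why the theorem is stated in relative form, with $d(x)$ appearing on the right-hand side rather than a universal constant. The place this bites in your argument is the slab step: you need $\PP^s(E\cap B(z,tr))\lesssim (tr)^s$ at the particular scale $tr$ to discard the excluded ball, and for $\HH^s$ this came from the upper half of Theorem~\ref{thm:besicovitch}, but for $\PP^s$ you have no such estimate at an arbitrary scale.

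You do correctly identify the main obstruction as a scale-matching problem --- the scales where the density is well controlled need not be the scales where the cone is small --- and this is precisely the difficulty the paper flags. But your proposed resolution (Lebesgue density points of $A_1$ plus a ``Vitali-type selection that synchronizes'' scales) is not an argument; it is a restatement of the problem. Since the paper gives no proof, I cannot tell you whether the eventual argument in \cite{Kaenmaki2010} follows your outline, but as written your proposal rests on a false density bound and leaves the acknowledged hard step unresolved.
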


The packing measure satisfies
\begin{equation*}
  \liminf_{r \downarrow 0} \frac{\PP^s(E \cap B(x,r))}{(2r)^s} = 1
\end{equation*}
for $\PP^s$-almost all $x \in E$ whenever $E \subset \R^2$ is such that $\PP^s(E)<\infty$; see e.g.\ \cite[Theorem 6.10]{Mattila1995}). Compare this to Theorem \ref{thm:besicovitch}. The challenge in the proof of Theorem \ref{thm:packing} comes from the fact that the scales obtained from different estimates do not necessarily match. We omit the proof but the interested reader is referred to the survey \cite{Kaenmaki2010}.

What about more general measures? Most measures are so unevenly distributed that there are no gauge functions that could be used to approximate the measure in small balls. It seems natural to examine the ratios
\begin{equation*}
  \frac{\mu(X(x,r,\theta,\alpha))}{\mu(B(x,r))}.
\end{equation*}
For conical density results in this direction, the reader is referred to \cite{CsornyeiKaenmakiRajalaSuomala2010, KaenmakiRajalaSuomala2016, SahlstenShmerkinSuomala2013}. These results declare that if the dimension of the measure is strictly larger than one, then there are arbitrary small scales where the measure is well distributed. In \S \ref{sec:conical-scenery}, we show that, equipped with appropriate dynamical machinery, most of the referred results are consequences of rectifiability and, in particular, Lemma \ref{thm:lemma1}.

\subsection{Unrectifiability and conical densities}
Besides assuming the dimension to be strictly larger than one, another condition to guarantee the measure to be scattered enough is unrectifiability. A Radon measure $\mu$ is \emph{rectifiable} if $\mu \ll \HH^1$ and there exists a rectifiable set $E \subset \R^2$ such that $\mu(\R^2 \setminus E) = 0$. A Radon measure $\mu$ is \emph{purely unrectifiable} if $\mu(E)=0$ for all rectifiable sets $E \subset \R^2$.

\begin{theorem} \label{thm:unrect}
  If $M > 0$ and $0<\alpha\le 1$, then there is a constant $\eps = \eps(M,\alpha)>0$ satisfying the following: For every $\theta \in S^1$ and purely unrectifiable measure $\mu$ on $\R^2$ with
  \begin{equation} \label{eq:kaenmaki1}
    \limsup_{r \downarrow 0} \frac{\mu(B(x,2r))}{\mu(B(x,r))} < M
  \end{equation}
  for $\mu$-almost all $x \in \R^2$ it holds that
  \begin{equation*}
    \limsup_{r \downarrow 0} \frac{\mu(X(x,r,\theta,\alpha))}{\mu(B(x,r))} \ge \eps
  \end{equation*}
  for $\mu$-almost all $x \in \R^2$.
\end{theorem}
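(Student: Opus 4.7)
The plan is to argue by contradiction, producing a rectifiable set of positive $\mu$-measure, in conflict with pure unrectifiability. Assume the conclusion fails; then for some $\theta \in S^1$ there is a Borel set of positive $\mu$-measure on which $\mu(X(x,r,\theta,\alpha)) < \eps \mu(B(x,r))$ for every sufficiently small $r$. Pigeonholing the scale threshold together with the doubling constant from \eqref{eq:kaenmaki1}, I extract a compact $K$ with $\mu(K)>0$ and fixed constants $r_0 > 0$ and $M' < \infty$ (depending only on $M$) such that
\begin{equation*}
  \mu(B(x,2r)) \le M' \mu(B(x,r)) \quad\text{and}\quad \mu(X(x,r,\theta,\alpha)) < \eps \mu(B(x,r))
\end{equation*}
for every $x \in K$ and every $0 < r < r_0$.

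Set $\alpha' = \alpha/2$, $c = \alpha/(4(1+\alpha))$, and $k = \lceil \log_2(3/c) \rceil$; note that $k$ depends only on $\alpha$. The key geometric input is the inclusion $B(y, c|y-x|) \subset X(x, 2|y-x|, \theta, \alpha)$ whenever $x, y \in \R^2$ satisfy $0 < |y-x| < r_0/2$ and $y \in X(x, 2|y-x|, \theta, \alpha')$. This follows from $|\proj_{\theta^\bot}(z-x)| \le \alpha'|y-x| + c|y-x|$ and $|z-x| \ge (1-c)|y-x|$ for $z \in B(y, c|y-x|)$, together with the choice of $c$, which ensures $\alpha' + c < \alpha(1-c)$. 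Combining this inclusion with the trivial containment $B(x, 2|y-x|) \subset B(y, 3|y-x|)$ and iterating the doubling condition $k$ times from scale $3|y-x|$ down to scale $c|y-x|$ yields
\begin{equation*}
  \mu(B(y, c|y-x|)) \le \mu(X(x, 2|y-x|, \theta, \alpha)) < \eps\,\mu(B(x, 2|y-x|)) \le \eps (M')^k \mu(B(y, c|y-x|))
\end{equation*}
whenever $x, y \in K$ with $y \in X(x, 2|y-x|, \theta, \alpha')$. Setting $\eps = \eps(M,\alpha) = (2(M')^k)^{-1}$ forces $\mu(B(y, c|y-x|)) = 0$, so $y \notin \spt(\mu)$.

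Consequently, for every $x \in K \cap \spt(\mu)$ the intersection $(K \cap \spt(\mu)) \cap X(x, r_0/2, \theta, \alpha')$ is empty, because any such $y$ would lie in the support yet satisfy $\mu(B(y, c|y-x|)) = 0$. Lemma \ref{thm:lemma1} applied to $K \cap \spt(\mu)$, with the single choice of direction $\theta$, aperture $\alpha'$, and radius $r_0/2$, then yields that $K \cap \spt(\mu)$ is rectifiable. But $\mu(K \cap \spt(\mu)) = \mu(K) > 0$, contradicting pure unrectifiability of $\mu$.

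The main obstacle I expect is calibrating the constants so that $\eps$ depends only on $M$ and $\alpha$: the pigeonhole step must extract both a uniform doubling constant $M'$ and a uniform scale $r_0$, and one must verify that the iteration length $k$ depends only on $\alpha$ (through $c$). A subsidiary point is checking that the doubling iteration is legitimate, i.e.\ that each intermediate scale $3|y-x|/2^i$ lies below $r_0$; this is automatic from $|y-x| < r_0/2$. The geometric containment $B(y,c|y-x|) \subset X(x,2|y-x|,\theta,\alpha)$ is the heart of the argument and must be carried out with some care, since it relies on $c|y-x|$ being strictly smaller than $|y-x|$ so that the denominator $|z-x|$ in the sine condition stays bounded below.
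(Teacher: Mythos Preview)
Your argument is correct, and it takes a genuinely different route from the paper's proof, though both hinge on Lemma~\ref{thm:lemma1} and the doubling hypothesis.

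The paper argues as follows: on the bad set $A$ it uses the contrapositive of Lemma~\ref{thm:lemma1} (pure unrectifiability forces every narrow cone $X(x,r,\theta,\alpha/4)$ to meet $A$), introduces $h(x)=\sup\{|y-x|:y\in A\cap X(x,r,\theta,\alpha/4)\}>0$, and then runs a $5r$-covering argument on the projections $\proj_{\theta^\bot}(B(x_i,\alpha h(x_i)/20))$ together with a Besicovitch density point to obtain the contradiction $\mu(A\cap B(x_0,r))\le \mu(B(x_0,r))/2$.

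You instead show directly that $K\cap\spt(\mu)$ has empty $\alpha'$-cones and then invoke Lemma~\ref{thm:lemma1} at the end: if $y\in K$ lies in the $\alpha'$-cone at $x\in K$, the inclusion $B(y,c|y-x|)\subset X(x,2|y-x|,\theta,\alpha)$ combined with the small-cone-mass assumption and iterated doubling forces $\mu(B(y,c|y-x|))=0$, ejecting $y$ from $\spt(\mu)$. This produces a rectifiable set of positive $\mu$-measure, contradicting pure unrectifiability. Your approach is more streamlined, avoiding both the covering theorem and the density-point step; the paper's approach, on the other hand, keeps the argument entirely inside the measure $\mu$ on $A$ without ever passing through the support. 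One small cosmetic point: your justification that ``each intermediate scale $3|y-x|/2^i$ lies below $r_0$'' should track the largest $r$ to which doubling is applied, namely $r=3|y-x|/2$; with $|y-x|<r_0/2$ this gives $3|y-x|/2<3r_0/4<r_0$, so all is well.
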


\begin{proof}
  Observe that \eqref{eq:kaenmaki1} implies the existence of a constant $M' = M'(M,\alpha)$ for which
  \begin{equation*}
    \limsup_{r \downarrow 0} \frac{\mu(B(x,3r))}{\mu(B(x,\alpha r/20))} < M'
  \end{equation*}
  for $\mu$-almost all $x \in \R^2$. Let us show that the claim holds with $\eps = (4MM')^{-1}$. If this is not the case, then there exist $\theta \in S^1$, purely unrectifiable $\mu$ satisfying \eqref{eq:kaenmaki1}, and a Borel set $A \subset \R^2$ with $\mu(A)>0$ such that
  \begin{equation} \label{eq:kaenmaki2}
    \mu(X(x,2r,\theta,\alpha)) < \eps\mu(B(x,2r))
  \end{equation} 
  for all $x \in A$ and $0<r<r_0$. We may assume that
  \begin{equation} \label{eq:kaenmaki3}
  \begin{split}
    \mu(B(x,2r)) &\le M\mu(B(x,r)), \\
    \mu(B(x,3r)) &\le M'\mu(B(x,\alpha r/20))
  \end{split}
  \end{equation}
  for all $x \in A$ and $0<r<r_0$. By the Besicovitch density theorem (see e.g.\ \cite[Theorem 2.14(1)]{Mattila1995}), we find $x_0 \in A$ and $0<r<r_0$ such that
  \begin{equation} \label{eq:kaenmaki4}
    \mu(A \cap B(x_0,r)) > \mu(B(x_0,r))/2.
  \end{equation}
  
  Define
  \begin{equation*}
    h(x) = \sup\{ |y-x| : y \in A \cap X(x,r,\theta,\alpha/4) \}
  \end{equation*}
  for all $x \in A \cap B(x_0,r)$. Since $\mu$ is purely unrectifiable we see that, by Lemma \ref{thm:lemma1}, $h(x)>0$ for $\mu$-almost all $x \in A \cap B(x_0,r)$. For each $x \in A \cap B(x_0,r)$ with $h(x)>0$ we choose $y_x \in A \cap X(x,r,\theta,\alpha/4)$ such that $|y_x-x| > 3h(x)/4$. Then
  \begin{equation} \label{eq:kaenmaki5}
    A \cap \proj_{\theta^\bot}^{-1}(B(x,\alpha h(x)/4)) \subset X(x,2h(x),\theta,\alpha) \cup X(y_x,2h(x),\theta,\alpha)
  \end{equation} 
  for all $x \in A \cap B(x_0,r)$. By the $5r$-covering theorem (see e.g.\ \cite[Theorem 2.1]{Mattila1995}) there exists a countable collection of pairwise disjoint balls $\{ \proj_{\theta^\bot}(B(x_i,\alpha h(x_i)/20)) \}_i$ so that $5$ times bigger balls cover the set $\proj_{\theta^\bot}(A \cap B(x_0,r))$. Thus, by \eqref{eq:kaenmaki5}, \eqref{eq:kaenmaki2}, and \eqref{eq:kaenmaki3},
  \begin{align*}
    \mu(A \cap B(x_0,r)) &\le \sum_i \mu(A \cap B(x_0,r) \cap \proj_{\theta^\bot}^{-1}(B(x_i,\alpha h(x_i)/4))) \\
    &\le \eps\sum_i \mu(B(x_i,2h(x_i))) + \eps\sum_i \mu(B(y_{x_i},2h(x_i))) \\
    &\le 2\eps\sum_i \mu(B(x_i,3h(x_i)))
    \le 2\eps M'\sum_i \mu(B(x_i,\alpha h(x_i)/20)) \\
    &\le 2\eps M' \mu(B(x_0,2r))
    \le 2\eps M'M \mu(B(x_0,r))
    = \mu(B(x_0,r))/2.
  \end{align*}
  This is a contradiction with \eqref{eq:kaenmaki4}.
\end{proof}

We shall finish this section by analyzing the sharpness of Theorem \ref{thm:unrect}. In Example \ref{ex:example8}, we show that one cannot generalize the result by taking the infimum over all $\theta \in S^1$ before taking the $\limsup$ as in Theorem \ref{thm:mattila}. On the other hand, Example \ref{ex:example9} shows that the result does not hold without the doubling assumption \eqref{eq:kaenmaki1}.

\begin{example} \label{ex:example8}
  There exists purely unrectifiable measure $\mu$ satisfying \eqref{eq:kaenmaki1} such that for each $0<\alpha<1$ we have
  \begin{equation} \label{eq:examplecondition}
    \lim_{r \downarrow 0} \inf_{\theta \in S^1} \frac{\mu(X(x,r,\theta,\alpha))}{\mu(B(x,r))} = 0
  \end{equation}
  for $\mu$-almost all $x \in \R^2$.
\end{example}

\begin{proof}[Construction.]
  The measure $\mu$ will be $\HH^1$ restricted to a purely unrectifiable compact set $A \subset \R^2$. It is evident that $\mu$ is then purely unrectifiable. We construct the set $A$ using a nested sequence $A_0 \supset A_1 \supset \cdots$ of compact sets. The first set $A_0$ is just the unit ball $B(0,1)$. We define a collection of mappings $f_{i,j}$ with $i\in \N$ and $j \in \{1, \dots, 2i^2\}$ as
  \begin{align*}
    f_{i,j}(x,y)=\frac{1}{2i^2}\big( & (\cos(\alpha_i)x+2j-2i^2-1)-(-1)^j\sin(\alpha_i)y,
    (-1)^j\sin(\alpha_i)x+\cos(\alpha_i)y \big),
  \end{align*}
  where $\alpha_i = 1/\sqrt{i}$. Then for each $n \in \N$ we define the set $A_n$ by
  \[
    A_n = \bigcup_{\substack{i\in \{1,\dots,n\}\\ j_i\in\{1,\dots,2i^2\}}} f_{1,j_1}\circ \cdots \circ f_{n,j_n} (A_0).
  \]
  Finally, we set $A=\bigcap_{n=1}^\infty A_n$. So, in the construction, each level $n-1$ ball contains $2n^2$ many disjoint balls along the diagonal making an angle $\alpha_{n-1}$ with the previous diagonal. See Figure \ref{fig1} for an illustration. We refer to the radius of step $n$ construction ball as $R_n$. That is, $R_0 =1$ and $R_n = \frac{R_{n-1}}{2n^2}$ for $n\ge 1$.

  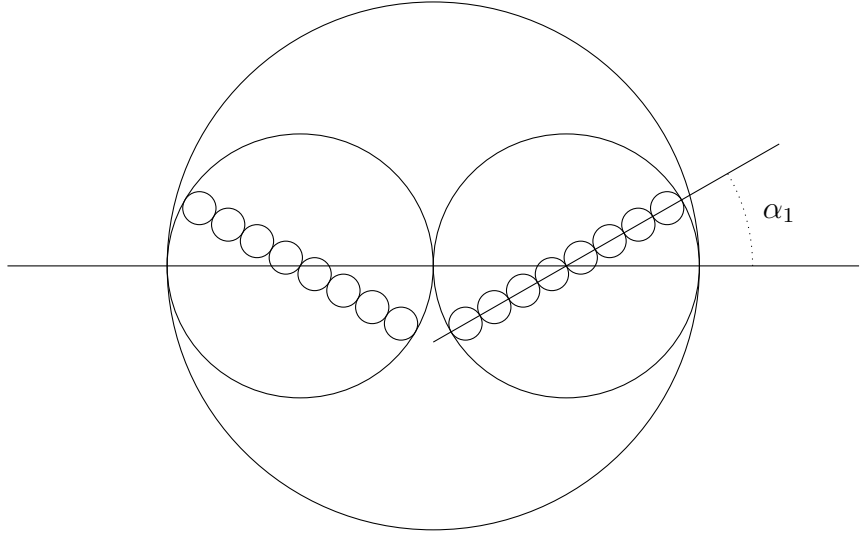
\begin{figure}[t!]
  \begin{center}
    \begin{tikzpicture}[scale=0.7]
      \draw (-8,0) -- (8,0);
      \draw (0,0) circle (5);
      
      \draw (-2.5,0) circle (2.5);
      \draw (2.5,0) circle (2.5);

      \draw (0,-1.443375673) -- (6.5,2.309401077);
      \draw (0.605569431,-1.09375) circle (0.3125);
      \draw (1.146835308,-0.78125) circle (0.3125);
      \draw (1.688101185,-0.46875) circle (0.3125);
      \draw (2.229367062,-0.15625) circle (0.3125);
      \draw (2.770632939,0.15625) circle (0.3125);
      \draw (3.311898816,0.46875) circle (0.3125);
      \draw (3.853164693,0.78125) circle (0.3125);
      \draw (4.394430570,1.09375) circle (0.3125);

      \draw (-0.605569431,-1.09375) circle (0.3125);
      \draw (-1.146835308,-0.78125) circle (0.3125);
      \draw (-1.688101185,-0.46875) circle (0.3125);
      \draw (-2.229367062,-0.15625) circle (0.3125);
      \draw (-2.770632939,0.15625) circle (0.3125);
      \draw (-3.311898816,0.46875) circle (0.3125);
      \draw (-3.853164693,0.78125) circle (0.3125);
      \draw (-4.394430570,1.09375) circle (0.3125);

      \draw [domain=0:30, dotted] plot ({2.5+3.5*cos(\x)}, {3.5*sin(\x)});
      \node[align=center] at (6.5,1) {$\alpha_1$};
    \end{tikzpicture}
    \caption{The picture depicts the first two steps, $A_1$ and $A_2$, in the construction of the set $A$ in Example \ref{ex:example8}.}
    \label{fig1}
  \end{center}
  \end{figure}

  Let us verify that the set $A$ admits the desired properties. Essentially the same argument as in \cite[Theorem III]{Moran1946} shows that $A\subset B(0,1)$ is a compact set with $0<\HH^1(A)\leq 1$: The upper bound is trivial as the sum of the diameters of level $n$ construction balls is always one. If $F \subset B(0,1)$, then there is $n$ and a collection $\BB$ of level $n$ construction balls covering $F \cap A$ so that $\sum_{B \in \BB} \diam(B) < 10\diam(F)$. This gives the lower bound. Moreover, we have $\HH^1(A\cap
  B_n)=R_n\HH^1(A)$ for each construction ball $B_n$ of level $n$. In fact, we have $\HH^1|_A(B(x,r)) \approx r$ for all $x \in \R^2$ and $0<r<1$. This verifies that $\mu$ satisfies \eqref{eq:kaenmaki1}.
  
  For each $x \in A$ there is a unique address $a(x) = \bigl(a_1(x),a_2(x),\ldots\bigr)$ so that  $a_i(x) \in \{1,\dots, 2i^2\}$
  and
  \[
  \{ x \} = \bigcap_{i=1}^\infty f_{1, a_1(x)} \circ \cdots \circ f_{i,a_i(x)} (A_0).
  \]
  By Kolmogorov's zero-one law and the three-series criteria (for
  example, see \cite[Section 17.3]{Loeve1963}), the series $\sum_{i=1}^n(-1)^{a_i(x)}\alpha_i$ diverges for $\HH^1$-almost every $x \in A$. Since $\alpha_i \downarrow 0$ as $i \to \infty$, the sequence $\sum_{i=1}^n (-1)^{a_i(x)}\alpha_i \mod \pi$ has at least two accumulation points for $\HH^1$-almost every $x \in A$. Take such a point $x$ and fix an angle $\beta \in [0, 2\pi]$. It follows that there is $\eps > 0$ so that
  \[
  \limsup_{n \to \infty} \min_{k\in \Z}|\beta - \sum_{i=1}^n(-1)^{a_i(x)}\alpha_i + k\pi| > 4\eps.
  \]
  Let $\theta_\beta$ be the line with an angle $\beta$. We will
  show that
  \begin{equation}\label{eq:notapprtan}
    \limsup_{r \downarrow 0} \frac{\HH^1(A \cap B(x,r) \setminus X(x,r,\theta_\beta,\eps))}{r} > 0.
  \end{equation}
  This means that $\theta_\beta$ is not an approximate tangent of $A$ at
  $x$ and thus $A$ is purely unrectifiable, see for example
  \cite[Corollary 15.20]{Mattila1995}. Take $n \in \N$ large enough so that
  \[
    \min_{k\in \Z}|\beta - \sum_{i=1}^n(-1)^{a_i(x)}\alpha_i+ k\pi| > 2\eps.
  \]
  Since all the $2n^2$ level $n$ construction balls inside the ball
  $f_{1,a_1(x)}\circ \cdots \circ f_{n-1,a_{n-1}(x)}(A_0)$ hit the line
  from $x$ with direction $\sum_{i=1}^n(-1)^{a_i(x)}\alpha_i$, there
  exists $K$ depending only on $\eps$ (it suffices to take
  $K>10/\eps$) so that
  \[
    \#\{m : B_m \cap X(x,R_{n-1},\theta_\beta,\eps) \ne \emptyset\} \le K,
  \]
  where $B_m = f_{1,a_1(x)}\circ \cdots \circ f_{n-1,a_{n-1}(x)} \circ f_{n,m} (A_0)$. This yields an adequate surplus of balls outside the cone $X(x,R_{n-1},\theta_\beta,\eps)$ giving
  \[
    \frac{\HH^1(A\cap B(x,R_{n-1}) \setminus X(x,R_{n-1},\theta_\beta,\eps))}{R_{n-1}} \ge \frac{2n^2-K}{2n^2}\HH^1(A)
  \]
  and therefore \eqref{eq:notapprtan} holds.

  It remains to verify that \eqref{eq:examplecondition} holds. Let $x\in A$
  and $0 < \alpha \le 1$. First observe from the construction that with
  any $n \in \N$ and $y \in A \setminus (f_{1,a_1(x)}\circ \cdots \circ
  f_{n-1,a_{n-1}(x)}(A_0))$ we have
  \[
  \dist(y,x)\ge (1-\cos(\alpha_{n}))R_{n-1} \ge R_{n-1}/(4n) =
  2n^2 R_n/(4n) = n R_n/2.
  \]
  Let $0<r<1$ and choose the $n\in \N$ for which $nR_n \leq
  2r<(n-1)R_{n-1}$. Let $\theta$ be the line perpendicular to the direction
  $\sum_{i=1}^{n-1}(-1)^{a_i(x)}\alpha_i$. Now there are numbers
  $M,n_0\in N$ depending only on $\alpha$ (letting $M>10/\alpha$ and
  $n_0$ so that $\alpha_{n_0-1}<\alpha/10$ will do) so that
  if $n\geq n_0$, then
  \[
  \#\{m : B_m \cap X(x,r,\theta,\alpha) \ne \emptyset\} \le M,
  \]
  where $B_m$'s denote the construction balls of level $n$. Thus
  \[
  \frac{\HH^1(A \cap X(x,r,\theta,\alpha))}{2r} \le \frac{M R_n \HH^1(A)}{nR_n} = \frac{M}{n} \HH^1(A) \longrightarrow 0,
  \]
  as $r\downarrow 0$.
\end{proof}

\begin{example} \label{ex:example9}
  There exists a purely unrectifiable measure $\mu$ and $\theta \in S^1$ such that for each $0<\alpha<1$ we have
  \begin{equation} \label{eq:example9}
    \lim_{r \downarrow 0} \frac{\mu(X(x,r,\theta,\alpha))}{\mu(B(x,r))} = 0
  \end{equation}
  for $\mu$-almost all $x \in \R^2$.
\end{example}

\begin{proof}[Construction.]
  Let us first explain the main idea in the construction. The direction $\theta$ will be the horizontal line. The measure $\mu$ will be constructed by a repetitive process which starts with the Lebesgue measure restricted to a rectangle having sides parallel to the coordinate axis and width strictly smaller than height. All the rectangles in the construction have similar form. Then, at each step, the measure at a given rectangle will be redistributed in such a way that its support is contained in a chain of subrectangles where the location of each subrectangle alternate between left and right side of the original rectangle as one travels vertically. Furthermore, the mass in this process will be divided in such a way that the subrectangles close to the vertical center of the original rectangle get bigger mass than the subrectangles close to the bottom and top.

  We shall then make the following observations: All the vertical curves $\gamma$ miss a lot of the support and we have $\mu(\gamma)=0$. The horizontal curves $\gamma$ can hit only a bounded number of construction pieces and hence $\mu(\gamma)=0$. Since each $C^1$-curve $\gamma$ is a countable union of vertical and horizontal curves we conclude that $\mu(\gamma)=0$ and $\mu$ is purely unrectifiable. Finally, since $\mu$ is concentrated in the vertical centers of the construction rectangles, the cones $X(x,r,\theta,\alpha)$ contain smaller and smaller relative mass. The claim follows.

  Let us now give the detailed construction. We shall construct the measure $\mu$ by using families of maps
  \[
    \{ f_{k,h}^i : k \in \{ 0,\ldots,i-1 \} \text{ and } h \in \{ 0,\ldots,2i^2-1 \} \}_{i=1}^\infty
  \]
  with
  \[
    f_{k,h}^i((x,y)) = \biggl(\frac{(-1)^ki+x}{2i^3},\frac{2ki^2+h+y}{2i^3}\biggr)
  \]
  for every $i \in \{2,3,\dots\}$, $ k \in \{0, \dots, i-1\}$, and $h \in \{0, \dots, 2i^2-1\}$.

  Given $\{f_{k,h}^i\}_{k,h}$, define $F_i$ that maps a measure $\nu$ on $\R^2$ to a measure $F_i(\nu)$ so that for every Borel set $A \subset \R^2$ we get
  \begin{equation}\label{eq:massdistribution}
    F_i(\nu)(A) = \sum_{k = 0}^{i-1} \sum_{h=0}^{2i^2-1} C_i(2i)^{-|h-i^2+\frac{1}{2}|}\nu((f_{k,h}^i)^{-1}(A)),
  \end{equation}
  where the constant $C_i$ is chosen so that $\sum_{k = 0}^{i-1} \sum_{h=0}^{2i^2-1} C_i(2i)^{-|h-i^2+\frac{1}{2}|} = 1$. Applying the map $F_i$ divides the measure into $i$ vertical strips. These strips correspond to the index $k$ in the mappings $f_{k,h}^i$. Inside the strips the measure is divided to $2i^2$ blocks using the index $h$. The measure is concentrated near the vertical centers of the strips by giving different weights to the maps $f_{k,h}^i$ with different values of $h$. See Figure \ref{fig2} for an illustration.

  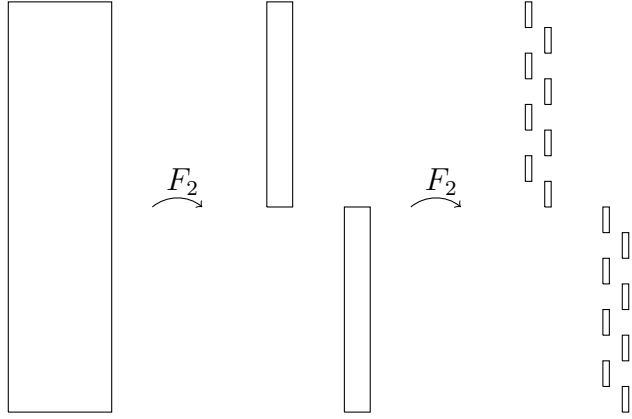
\begin{figure}[t!]
  \begin{center}
    \begin{tikzpicture}[scale=0.17]
      \draw (0,0) -- (0,32) -- (8,32) -- (8,0) -- (0,0);

      \draw[<-] (15,16) arc (50:130:3);
      \node[align=center] at (13.5,18) {$F_2$};
      
      \draw (20,16) -- (20,32) -- (22,32) -- (22,16) -- (20,16);
      \draw (26,0) -- (26,16) -- (28,16) -- (28,0) -- (26,0);

      \draw[<-] (35,16) arc (50:130:3);
      \node[align=center] at (33.5,18) {$F_2$};
      
      \draw (40,30) -- (40,32) -- (40.5,32) -- (40.5,30) -- (40,30);
      \draw (41.5,28) -- (41.5,30) -- (42,30) -- (42,28) -- (41.5,28);
      \draw (40,26) -- (40,28) -- (40.5,28) -- (40.5,26) -- (40,26);
      \draw (41.5,24) -- (41.5,26) -- (42,26) -- (42,24) -- (41.5,24);
      \draw (40,22) -- (40,24) -- (40.5,24) -- (40.5,22) -- (40,22);
      \draw (41.5,20) -- (41.5,22) -- (42,22) -- (42,20) -- (41.5,20);
      \draw (40,18) -- (40,20) -- (40.5,20) -- (40.5,18) -- (40,18);
      \draw (41.5,16) -- (41.5,18) -- (42,18) -- (42,16) -- (41.5,16);

      \draw (46,14) -- (46,16) -- (46.5,16) -- (46.5,14) -- (46,14);
      \draw (47.5,12) -- (47.5,14) -- (48,14) -- (48,12) -- (47.5,12);
      \draw (46,10) -- (46,12) -- (46.5,12) -- (46.5,10) -- (46,10);
      \draw (47.5,8) -- (47.5,10) -- (48,10) -- (48,8) -- (47.5,8);
      \draw (46,6) -- (46,8) -- (46.5,8) -- (46.5,6) -- (46,6);
      \draw (47.5,4) -- (47.5,6) -- (48,6) -- (48,4) -- (47.5,4);
      \draw (46,2) -- (46,4) -- (46.5,4) -- (46.5,2) -- (46,2);
      \draw (47.5,0) -- (47.5,2) -- (48,2) -- (48,0) -- (47.5,0);

    \end{tikzpicture}
    \caption{The picture depicts how the mapping $F_2$ transforms a rectangle in Example \ref{ex:example9}.}
    \label{fig2}
  \end{center}
  \end{figure}

  Let $N_1=0$ and for $i \in \{ 2,3,\ldots \}$ let $N_i$ be the smallest integer so that
  \begin{equation}\label{eq:numberofscales}
    \biggl(1-\frac{C_i}{8(2i)^{i^2-\frac{3}{2}}}\biggr)^{N_i} < \frac{1}{2}.
  \end{equation}
  Integers $N_i$ determine how many times we have to use map $F_i$ when constructing the measure $\mu$ in order to make the resulting measure unrectifiable. With these numbers define a sequence $(I_j)_{j=1}^\infty$ with
  \[
    I_{p+\sum_{i=1}^{t-1}N_i}=t
  \]
  for every $t \in \{2,3,\dots\}$ and $p\in \{1,\dots,N_t\}$. Let also $M_j = \prod_{i=1}^j(2I_i^3)$. Finally, since $F_i$ is a contraction for all $i$ and the space $\PP(X)$ of probability measures on a compact metric space $X$ is compact and metrizable, we may define $\mu$ to be the weak limit of
  \[
    F_{I_1}\circ F_{I_2} \circ \cdots \circ F_{I_m}(\mu_0)
  \]
  as $m \to \infty$. Here $\mu_0$ is any compactly supported Borel probability measure on $\R^2$. Take for example $\HH^1$ restricted to $\{0\}\times[0,1]$. For $i\in \N$, $k\in\{1,\dots, M_{i-1}I_i\}$, and $h\in\{1,\dots,2I_i^2\}$ we define strips
  \[
    S_{i,k} = \spt(\mu) \cap \biggl( \R \times \biggl[\frac{2(k-1)I_i^2}{M_i},\frac{2kI_i^2}{M_i}\biggr] \biggr)
  \]
  and blocks
  \[
    B_{i,k,h} = \spt(\mu) \cap \biggl(\R \times \biggl[\frac{2(k-1)I_i^2+h-1}{M_i},\frac{2(k-1)I_i^2+h}{M_i}\biggr]\biggr).
  \]
  Note that each strip consists of blocks, i.e.\ $S_{i,k} = \bigcup_{h \in \{1,\dots,2I_i^2\}} B_{i,k,h}$ for all $i\in \N$ and $k\in\{1,\dots, M_{i-1}I_i\}$.

  To prove the unrectifiability, we shall consider horizontal and vertical curves separately. We call a $C^1$-curve $\gamma$ \emph{horizontal} (respectively \emph{vertical}) if at every point $\sphericalangle(\gamma',e_1) < 2\pi/5$ (respectively $\sphericalangle(\gamma',e_2) < 2\pi/5$), where $\{e_1,e_2\}$ is the natural basis of $\R^2$. Let us first look at vertical curves: Let $\gamma$ be a $C^1$-curve in $\R^2$ so that $|\frac{\partial\gamma}{\partial y}| \ge\frac{1}{3}|\gamma'|$. Take $i \in \N$. Now for any $k \in \{1, \dots, I_{i+1}-1\}$ and $t \in \{0,\dots,M_i-1\}$  we have either
  \[
    \gamma \cap B_{i+1,2I_{i+1}^3t+k,2I_{i+1}^2} = \emptyset \quad \text{or} \quad \gamma \cap B_{i+1,2I_{i+1}^3t+k+1,1} = \emptyset.
  \]
  This means that when we look at two consecutive strips $S_{i+1,2I_{i+1}^3t+k}$ and $S_{i+1,2I_{i+1}^3t+k+1}$, we see that the curve $\gamma$ cannot meet both the uppermost block of the lower strip and the lowest block of the upper strip. This is because vertically these blocks are next to each other, but horizontally the distance is roughly at least $I_{i+1}$ times the width of the block. Hence the curve $\gamma$ misses more than one fourth of all the end blocks of the strips of the level $I_{i+1}$ construction step. Therefore by iterating and using inequality \eqref{eq:numberofscales}, we get
  \begin{align*}
  \mu(\gamma) & \le \prod_{i=1}^M \biggl(1-\frac{I_{i+1}C_{I_{i+1}}(2I_{i+1})^{-I_{i+1}^2+\frac{1}{2}}}{4} \biggr)\\
  &\le \prod_{m=2}^{I_M-1}\biggl(1-\frac{C_m}{8(2m)^{m^2-\frac{3}{2}}}\biggr)^{N_m} < 2^{-I_M+2} \to 0
  \end{align*}
  as $M \to \infty$.

  Next we look at horizontal curves: Let $\gamma$ be a $C^1$-curve in $\R^2$ so that $|\frac{\partial\gamma}{\partial x}| \ge\frac{1}{3}|\gamma'|$. Take $i \in \N$ and $t \in \{0,\dots,M_i-1\}$. Now there are at most two $k \in \{1,\dots, I_{i+1}\}$ so that
  \[
    \gamma \cap S_{i+1,tI_{i+2}+k} \ne \emptyset.
  \]
  By repeating this observation, we have
  \[
    \mu(\gamma) \le \prod_{i=2}^M\frac{2}{I_i} \to 0
  \]
  as $M \to \infty$. Take any $C^1$-curve $\gamma$ in $\R^2$. Because it can be covered with a countable collection of vertical and horizontal $C^1$-curves defined as above, we have $\mu(\gamma) = 0$. Thus, the measure $\mu$ is purely unrectifiable.

  Finally, let us show that cones around $\theta$ have small measure in the sense of the equality \eqref{eq:example9}. To do this fix $0 < \alpha < 1$ and take the smallest $i_0 \in \{3,4,\dots\}$ so that
  \begin{equation}\label{eq:anglegivingextrablock}
  \frac{1}{I_{i_0}} < \frac{\sqrt{1-\alpha}}{4}.
  \end{equation}
  Now take $i \in \{i_0+1,i_0+2,\dots\}$, a point $x \in \spt(\mu)$, and a radius $r \in [M_i^{-1},M_{i-1}^{-1}]$. Let $k_1 \in \N$ so that $x \in S_{i,k_1}$. Assume that there are at most two  $k' \in \N$ so that
  \[
    X(x,r,\theta,\alpha) \cap S_{i+1,k'} \ne \emptyset.
  \]
  Then
  \begin{equation}\label{eq:coneestimate1}
    \mu(X(x,r,\theta,\alpha)) \le \frac{2\mu(B(x,r))}{I_{i+1}}.
  \end{equation}
  Assume then that there are at least three such $k'$. If this is the case, then the cone $X(x,r,\theta,\alpha)$ must hit another large vertical strip $S_{i,k_2}$ with $k_2 \in \{k_1 - 1, k_1+1\}$.
  Inequality \eqref{eq:anglegivingextrablock} yields the existence of a block $B_{i,k_1,u} \subset B(x,r)$ whose vertical distance to the centre of the strip $S_{i,k_1}$ is strictly less than the vertical distance from the centre of the strip $S_{i,k_2}$ to any of the blocks $B_{i,k_2,u'}$ that intersect the cone $X(x,r,\theta,\alpha)$. Now the fact that we concentrated measure to the centre using equation \eqref{eq:massdistribution} gives
  \[
    \mu(B_{i,k_1,u})\ge \frac{(2I_i)^u}{2\sum_{p=1}^{u-1}(2I_i)^p}\mu(X(x,r,\theta,\alpha))
  \]
  and hence
  \[
    \mu(X(x,r,\theta,\alpha)) \le \frac{2\mu(B(x,r))}{I_i}.
  \]
  This together with \eqref{eq:coneestimate1} shows \eqref{eq:example9} as $i$ tends to infinity.
\end{proof}

\section{Scenery flow} \label{sec:scenery-flow}

In this section, we build the ergodic-theoretic machinery around the scenery flow. The presentation follows \cite{Hochman2010, KaenmakiRossi2016, KaenmakiSahlstenShmerkin2015a}.

\subsection{Measure preserving systems}
Let $(X,\BB,P)$ be a probability space. Here $X$ is a metric space, $\BB$ is the Borel $\sigma$-algebra on $X$, and $P \in \PP(X)$. We say that a Borel measurable transformation $T \colon X \to X$ \emph{preserves} $P$ if $TP=P$, where $TP$ is the push-forward of $P$ under $T$. A \emph{measure-preserving system} is a tuple $(X,\BB,P,T)$ where $(X,\BB,P)$ is a probability space and $T$ is an action of a semigroup under composition by transformations that preserve $P$. In other words, there is a semigroup $S$ and for each $s \in S$ there is a map $T_s \colon X \to X$ that preserves $P$ such that $T_{s+s'} = T_s \circ T_{s'}$. The underlying semigroup will always be one of $\N$, $\Z$ (in which case we speak of measure-preserving maps, since the action is determined by $T_1$), or $\R_+$, $\R$ (in which case we speak of \emph{flows}). Measures which are preserved by all transformations in the semigroup are called \emph{invariant}. Since $\BB$ is the Borel $\sigma$-algebra, we will, for notational convenience, make no explicit reference to it.

Let $(X,P,T)$ be measure preserving flow. We say that a set $A \in \BB$ is \emph{invariant} if $P(T_s^{-1}(A) \triangle A) = 0$ for all $s$. The flow is \emph{ergodic} if any invariant set $A \in \BB$ has either zero or full $P$-measure. For a given action $T$ on $X$, ergodic measures are the extremal points of the convex set of all probability measure which are preserved by $T$. The Birkhoff ergodic theorem asserts that if $f \in L^1(\mu)$ and the flow is ergodic, then
\begin{equation*}
  \lim_{T \to \infty} \frac{1}{T} \int_0^T f \circ T_s(x) \dd s = \int_X f \dd\mu
\end{equation*}
for $\mu$-almost all $x \in X$. For discrete actions, the same holds replacing the left-hand side by $\lim_{n \to \infty} \tfrac{1}{n} \sum_{k=0}^{n-1} f \circ T^k(x)$.

A measure preserving system $(X,P,T)$ can be decomposed into ergodic parts according to the ergodic decomposition theorem; for example, see \cite[Chapter II, Theorems 6.1 and 6.4]{Mane1987}. It says that there exists a Borel map $\omega \mapsto P_\omega$ from $X$ to $\PP(X)$ such that for $P$-almost every $\omega$ it holds that each $(X,P,T)$ is measure preserving and ergodic, and
\begin{equation*}
  P = \int_X P_\omega \dd P(\omega).
\end{equation*}
Moreover, this map is unique up to sets of zero $P$-measure. The measures $P_\omega$ are called the \emph{ergodic components} of $P$.

\subsection{Tangent distributions}
Equip $\R^2$ with the norm $\|(x,y)\| = \max\{|x|,|y|\}$ and the induced metric. The closed unit ball is now $[-1,1]^2$. Let $\MM = \MM(\R^2)$ denote the space of Radon measures on $\R^2$ equipped with the weak topology. Let $\PP(X)$ be the space of probability measures on $X$. When $X$ is a compact metric space, then $\PP(X)$ equipped with the weak topology is compact and metrizable. Probability measures on measures will be called distributions. Writing $\mu \sim \nu$ we indicate that the measures $\mu$ and $\nu$ are \emph{equivalent}, which means that $\mu$ and $\nu$ have the same null sets. To indicate that $x$ is chosen randomly according to $\mu$ we write $x \sim \mu$.

If $\mu \in \MM$ and $\mu(A)>0$, then $\mu|_A \in \MM$ is the restriction of $\mu$ on $A$. Furthermore, if $0<\mu(A)<\infty$, then the normalized version of $\mu|_A$ is $\mu_A \in \MM$ defined by
\begin{equation*}
  \mu_A(B) = \frac{\mu|_A(B)}{\mu(A)}.
\end{equation*}
We also write
\begin{equation*}
  \mu^* = \frac{\mu}{\mu([-1,1]^2)} \quad \text{and} \quad
  \mu^\square = (\mu|_{[-1,1]^2})^* = \mu_{[-1,1]^2} = \frac{\mu|_{[-1,1]^2}}{\mu([-1,1]^2)}.
\end{equation*}
The operations $*$ and $\square$ are called \emph{normalization operations} and we apply them pointwise to sets and distributions. For example, if $\AA \subset \MM$, then $\AA^* = \{ \mu^* : \mu \in \AA \}$. In particular,
\begin{equation*}
  \MM^* = \{ \mu \in \MM : \mu([-1,1]^2)=1 \} \quad \text{and} \quad
  \MM^\square = \PP([-1,1]^2).
\end{equation*}
In the same way, $P^*$ is the push-forward of the distribution $P$ under the map $\mu \mapsto \mu^*$.

We define $T_x \colon \R^2 \to \R^2$, $T_x(y) = y-x$, and $S_t \colon \R^2 \to \R^2$, $S_t(y) = e^ty$, for all $x \in \R^2$ and $t \in \R_+$.
Thus,
\begin{equation*}
  T_x\mu(A) = \mu(A+x) \quad \text{and} \quad
  S_t\mu(A) = \mu(e^{-t}A).
\end{equation*}
Denoting
\begin{equation*}
  T_x^\square \mu = (T_x\mu)^\square \quad \text{and} \quad
  S_t^\square \mu = (S_t\mu)^\square,
\end{equation*}
and similarly for $*$, we see that e.g.\
\begin{equation*}
  S_t^\square \mu(A) = \frac{\mu(e^{-t}A)}{\mu([-e^{-t},e^{-t}]^2)}
\end{equation*}
for all $A \subset [-1,1]^2$. Thus, we get $S_t^\square\mu$ from $\mu$ by scaling $\mu|_{[-e^{-t},e^{-t}]^2}$ into $[-1,1]^2$ and normalizing. By the exponential scaling, $(S_t^\square)_{t \in \R_+}$ is a one-sided flow on $\{ \mu \in \MM^\square : 0 \in \spt(\mu) \}$ and it is called a \emph{scenery flow} at $0$. We note that $S_t^\square$ is discontinuous (at measures $\mu$ with $\mu(\partial[-e^{-t},e^{-t}]^2)>0$) and $\{ \mu \in \MM^\square : 0 \in \spt(\mu) \}$ is not closed. Let $\mu \in \MM$ and $x \in \spt(\mu)$. Defining
\begin{equation*}
  \mu_{x,t}^\square = S_t^\square(T_x\mu),
\end{equation*}
the one-sided flow $(\mu_{x,t}^\square)_{t \in \R_+}$ is called the \emph{scenery flow} at $x$. Weak limits of $\mu_{x,t}^\square$ are the \emph{tangent measures} of $\mu$ at $x$. Note that the tangent measures defined by Preiss \cite{Preiss1987} correspond to taking weak limits of $\mu_{x,t}^* = S_t^*(T_x\mu)$. In the original definition, the tangent measures were constructed without restricting the measure. The family of tangent measures of $\mu$ at $x$ is denoted by $\Tan(\mu,x)$.

A \emph{tangent distribution} of $\mu$ at $x \in \spt(\mu)$ is any weak limit of
\begin{equation*}
  \la \mu \ra_{x,T}^\square = \frac{1}{T} \int_0^T \delta_{\mu_{x,t}^\square} \dd t
\end{equation*}
as $T \to \infty$. Here the integration makes sense since we are on a convex subset of a topological linear space. The family of tangent distributions of $\mu$ at $x$ is denoted by $\TD(\mu,x)$. Since $\TD(\mu,x)$ is defined as a set of accumulation points in a compact space $\PP(\MM^\square)$ the subspace $\TD(\mu,x)$ is always non-empty and compact at $x \in \spt(\mu)$. If the limit is unique, then the collection of views will have well defined statistics when zooming in. It is straightforward to see that the support of each $P \in \TD(\mu,x)$ is in $\Tan(\mu,x)$. Observe that, by the discontinuity of $S_t^\square$, it is not a priori clear that $(\MM^\square,P,S^\square)$ is a measure preserving system for $P \in \TD(\mu,x)$.

\subsection{Fractal distributions}
If $U \subset \R^2$ and $\mu \in \MM$, then we define the \emph{$U$-diffusion} of $\mu$ by
\begin{equation*}
  \la \mu \ra_U = \int_U \delta_{T_x\mu} \dd \mu(x).
\end{equation*}
In other words, $\la\mu\ra_U$ is the distribution of the random measure $\nu$ obtained by choosing $x \in U$ according to $\mu$ and setting $\nu = T_x\mu$. We say that a distribution $P$ on $\MM^*$ is \emph{quasi-Palm} if
\begin{equation*}
  P \sim \int_{\MM^*} \la\mu\ra_U^* \dd P(\mu)
\end{equation*}
for every open and bounded neighborhood $U$ of the origin. Here $\la\mu\ra_U^* = \int_U \delta_{T^*_x\mu} \dd\mu(x)$. This means that $P$ is quasi-Palm if a Borel set $\AA$ satisfies $P(\AA)=1$ if and only if $T^*_z\nu \in \AA$ for $P$-almost all $\nu \in \MM^*$ and $\nu$-almost all $z \in \R^2$.

Recall that a distribution $P$ on $\MM^*$ is $S^*$-invariant if $S^*_tP=P$ for all $t$. If $P$ is an $S^*$-invariant distribution, then $P^\square$ is an $S^\square$-invariant distribution on $\MM^\square$ called the \emph{restricted version} of $P$. Similarly, $P$ is called the \emph{extended version} of $P^\square$. This is a one-to-one correspondence between $S^*$- and $S^\square$-invariant distributions.

We say that a distribution $P$ on $\MM^*$ is \emph{fractal distribution} if it is $S^*$-invariant and quasi-Palm. When $P$ is a fractal distribution we shall often refer to its restricted version $P^\square$ as a fractal distribution. This is justified by the one-to-one correspondence between the restricted and extended versions.

The following theorem shows that tangent distributions are fractal distributions.

\begin{theorem} \label{thm:theorem11}
  If $\mu \in \MM$, then $\TD(\mu,x)$ is a collection of restricted fractal distributions for $\mu$-almost all $x \in \R^2$.
\end{theorem}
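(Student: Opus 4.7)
The plan is to verify the two defining properties for any $P \in \TD(\mu,x)$ produced at a $\mu$-typical $x$, namely that $P$ is $S^\square$-invariant and that its extended version on $\MM^*$ is quasi-Palm. Fix a subsequence $T_n\to\infty$ along which $\la\mu\ra_{x,T_n}^\square \to P$ weakly in $\PP(\MM^\square)$.

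For invariance, my first step is Cesaro telescoping: using that $S_s^\square \circ S_t^\square = S_{s+t}^\square$ on $T_x\mu$ (the outer restriction to $[-1,1]^2$ only picks up what survives the deepest restriction), one gets
\[
  S_s^\square \la\mu\ra_{x,T}^\square = \tfrac{1}{T}\int_0^T \delta_{\mu_{x,t+s}^\square}\dd t,
\]
which differs from $\la\mu\ra_{x,T}^\square$ by a signed measure of total variation $O(s/T)$. The delicate point is that $S_s^\square$ is discontinuous at measures charging $\partial[-e^{-s},e^{-s}]^2$. However, for any locally finite $\nu$, only countably many $s$ can charge the corresponding boundary (these boundaries are pairwise disjoint), so by Fubini $P(\{\nu : \nu(\partial[-e^{-s},e^{-s}]^2)>0\})=0$ for Lebesgue-a.e.\ $s$. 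For such good $s$ the Portmanteau theorem gives $S_s^\square P = P$, and the remaining $s$ are handled by approaching from the good ones combined with right-continuity of $s\mapsto S_s^\square P$ on a $P$-full set.

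The substantial half is the quasi-Palm property. The key identity, immediate from the definitions, is the translation/zoom commutation
\[
  T_z^* \mu_{x,t}^* = \mu_{x+e^{-t}z,\, t}^*
\]
for $z$ in the support of $\mu_{x,t}^*$. Fix a bounded open neighborhood $U$ of the origin and a bounded continuous test function $f$ on $\MM^*$. The plan is to unfold the $U$-diffusion through this identity: interchanging the weak limit with the outer $U$-integration,
\[
  \int\!\!\int f\dd\la\nu\ra_U^*(\eta)\dd P(\nu) = \lim_{n\to\infty}\tfrac{1}{T_n}\int_0^{T_n}\!\!\int_U f\bigl(\mu_{x+e^{-t}z,\,t}^*\bigr)\dd\mu_{x,t}^\square(z)\dd t,
\]
then change variables $y = x + e^{-t}z$ at each scale $t$ to rewrite the right-hand side as a Cesaro average (in $t$) of $f(\mu_{y,t}^*)$ against $\mu$-mass in shrinking neighborhoods of $x$. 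A density/differentiation argument at $\mu$-typical $x$ should then identify this double average with $\int f\dd P$, yielding the mutual equivalence required by the quasi-Palm property.

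The hard part will be rigorously justifying the limit exchange and the final identification. Two obstacles stand out: first, the weak limit must be commuted with the $U$-integration, which requires uniform control over the normalizing denominators (the $\mu$-masses of small boxes around $x$)---achievable on a set of $x$ of full $\mu$-measure via maximal-function estimates; second, the comparison of the resulting double average with the single-center Cesaro average defining $P$ needs a pointwise statement that the scenery flow statistics at nearby points agree with those at $x$, which is where the ``$\mu$-almost every $x$'' hypothesis genuinely enters. I expect this last identification, i.e.\ the transfer between translated centers and the fixed center used to define $P$, to be the technical heart of the proof.
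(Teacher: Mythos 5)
Your proposal takes a genuinely different route from the paper: you attempt to verify the two defining properties of a fractal distribution directly, whereas the paper first transfers the problem to the symbolic CP-distribution setting (Lemmas~\ref{thm:generated-adapted} and~\ref{thm:generated-CP}), builds the two-sided extension and the centering operation, proves that centerings of CP-distributions are fractal distributions via stationarity of the two-sided process (Theorem~\ref{thm:cent-FD}), and finally shows that every tangent distribution arises as such a centering (Theorem~\ref{thm:theorem19}). Unfortunately, the direct route as you sketch it has two substantive gaps, and the second is essentially the entire content of the theorem.

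First, even your invariance argument has a missing piece beyond the countably-many-bad-scales issue you address. You need $P$-almost every $\nu$ to satisfy $0 \in \spt(\nu)$; otherwise $S_s^\square\nu$ is not even defined (the normalizing mass $\nu([-e^{-s},e^{-s}]^2)$ vanishes), so the Portmanteau/approximation argument cannot close. Each $\mu_{x,t}^\square$ does have the origin in its support, but $\{\nu \in \MM^\square : 0 \in \spt(\nu)\}$ is not weakly closed, so this does not pass to the limit $P$ for free. The paper explicitly flags this ("there is no a priori reason why the support of a tangent distribution should be in $\{\mu : 0 \in \spt(\mu)\}$"), and in its framework the point is resolved through the quasi-Palm property itself, not as a preliminary. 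Your Cesaro argument implicitly assumes what it needs to prove here.

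Second, and more seriously, the identification at the end of the quasi-Palm argument is the theorem, not a technicality. After your change of variables you are asking that the double Cesaro average of $f(\mu_{y,t}^*)$ over $y$ near $x$ (weighted by $\mu$) and over scales $t$ agree, up to mutual absolute continuity, with the single-center average defining $P$. No density or differentiation theorem delivers this: it is not a pointwise Lebesgue-type statement but a claim that the statistics of the scenery flow are stable under $\mu$-generic perturbation of the base point, and a priori there is no reason for this. The paper's proof of the corresponding fact is quite delicate: adaptedness (Lemma~\ref{thm:generated-adapted}) is established via a martingale strong law (the $\mathbb{E}(X_k \mid \FF_{k-1}) = 0$ computation and the Feller citation), and quasi-Palm (Theorem~\ref{thm:cent-FD}) is then extracted from the stationarity of the two-sided Markov process $(\mu_n,x_n)_{n\in\Z}$ together with the intertwining of $S^*_{\log 2}$ and the shift. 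The random translation of the dyadic grid (Lemma~\ref{thm:trans-leb-intensity}) is also essential to avoid pathologies at dyadic boundaries. None of these ingredients has an analogue in your sketch; as written, "a density/differentiation argument ... should then identify this double average with $\int f\dd P$" is where the proof would have to start, not where it ends.
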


The result is very surprising. While the $S^\square$-invariance is somewhat expected, there is no a priori reason why the support of a tangent distribution should be in $\{ \mu \in \MM^\square : 0 \in \spt(\mu) \}$ nor why should it satisfy the quasi-Palm property.

We shall next start building machinery to prove the theorem. The proof will be given in \S \ref{sec:MH-proof}. Basically all the results on fractal distributions are based on the interplay with CP-distributions which are Markov processes on the dyadic scaling sceneries of a measure introduced by Furstenberg. For simplicity, let us first consider the situation symbolically.

\subsection{Shift space}
Let $N \ge 2$ and $\Sigma = \{ 1,\ldots,N \}^\N$ be the collection of all infinite words constructed from integers $\{ 1,\ldots,N \}$. We denote the left shift operator by $\sigma$ and equip $\Sigma$ with the usual ultrametric in which the distance between two different words is $2^{-n}$, where $n$ is the first place at which the words differ. The \emph{shift space} $\Sigma$ is clearly compact. If $\iii = i_1i_2\cdots \in \Sigma$, then $\sigma(i_1i_2\cdots) = i_2i_3\cdots$ and $\iii|_n = i_1 \cdots i_n$ for all $n \in \N$. The empty word $\iii|_0$ is denoted by $\varnothing$. We set $\Sigma_n = \{ \iii|_n : \iii \in \Sigma \}$ for all $n \in \N$ and $\Sigma_* = \bigcup_{n=0}^\infty \Sigma_n$. Thus $\Sigma_*$ is the free monoid on $\{ 1,\ldots,N \}$. The concatenation of two words $\iii \in \Sigma_*$ and $\jjj \in \Sigma_* \cup \Sigma$ is denoted by $\iii\jjj$.

The length of $\iii \in \Sigma_* \cup \Sigma$ is denoted by $|\iii|$. For $\iii
\in \Sigma_*$ we set $\iii^- = \iii|_{|\iii|-1}$ and $[\iii] = \{ \iii\jjj \in
\Sigma : \jjj \in \Sigma \}$. The set $[\iii]$ is called a \emph{cylinder set}.
Cylinder sets are open and closed and they generate the Borel $\sigma$-algebra.
If $\iii,\jjj \in \Sigma_*$ such that $[\iii] \cap [\jjj] = \emptyset$, then we
write $\iii \bot \jjj$. The longest common prefix of $\iii,\jjj \in \Sigma_*
\cup \Sigma$ is denoted by $\iii \wedge \jjj$. Thus $\iii = (\iii \wedge
\jjj)\iii'$ and $\jjj = (\iii \wedge \jjj)\jjj'$ for some $\iii',\jjj' \in
\Sigma_* \cup \Sigma$.

\subsection{Adapted distributions}
Let $\PP(X)$ be the set of all Borel probability measures defined on a given metric space $X$. Recall that if $X$ is compact, then $\PP(X)$ is metrizable and compact in the weak topology. We define
\begin{equation} \label{eq:omegagamma}
  \Omega = \{ (\mu,\iii) \in \PP(\Sigma) \times \Sigma : \iii \in \spt(\mu) \}.
\end{equation}
The space $\Omega$ has the subspace topology inherited from the product space $\PP(\Sigma) \times \Sigma$. To simplify notation, we abbreviate $\PP(\Sigma) \times \Sigma$ as $\PP_\Sigma$. Since $(\mu,\iii) \mapsto \mu([\iii|_n])$ is continuous we see that the set $\{ (\mu,\iii) \in \PP_\Sigma : \mu([\iii|_n]) > 0 \}$ is open for all $n \in \N$ and therefore
\begin{equation*}
  \Omega = \bigcap_{n=0}^\infty \{ (\mu,\iii) \in \PP_\Sigma : \mu([\iii|_n]) > 0 \}
\end{equation*}
is a Borel set.

Measures on $\PP(\Sigma)$ and $\PP_\Sigma$ will be called distributions.
We say that a distribution $Q$ on $\PP_\Sigma$ is \emph{adapted} if there exists a distribution $\overline{Q}$ on $\PP(\Sigma)$ such that
\begin{equation} \label{eq:adapted}
  \int_{\PP_\Sigma} f(\mu,\iii) \dd Q(\mu,\iii) = \int_{\PP(\Sigma)} \int_{\Sigma} f(\mu,\iii) \dd\mu(\iii) \dd\overline{Q}(\mu)
\end{equation}
for all $f \in C(\PP_\Sigma)$. Here $C(X)$ is the space of all continuous functions $X \to \R$. In other words, $Q$ is adapted if choosing a pair $(\mu,\iii)$ according to $Q$ can be done in two-step process, by first choosing $\mu$ according to $\overline{Q}$ and then choosing $\iii$ according to $\mu$.

\begin{lemma} \label{thm:adapted_def}
  A distribution $Q$ on $\PP_\Sigma$ is adapted if and only if there exists a distribution $\overline{Q}$ on $\PP(\Sigma)$ such that
  \begin{equation*}
    \int_{\PP_\Sigma} f(\mu,\iii) \dd Q(\mu,\iii) = \int_{\PP(\Sigma)} \int_{\Sigma} f(\mu,\iii) \dd\mu(\iii) \dd\overline{Q}(\mu)
  \end{equation*}
  for all essentially bounded and measurable functions $f \colon \PP_\Sigma \to \R$.
\end{lemma}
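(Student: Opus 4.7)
The forward direction is immediate: continuous functions on the compact metric space $\PP_\Sigma = \PP(\Sigma) \times \Sigma$ are bounded and Borel measurable, so the ``bigger'' identity specializes to \eqref{eq:adapted}. I therefore focus on the converse. The plan is to rewrite both sides of \eqref{eq:adapted} as integrals against Borel probability measures on $\PP_\Sigma$, use the hypothesis and density of $C(\PP_\Sigma)$ to show these two measures coincide, and then bootstrap the identity from $C(\PP_\Sigma)$ to bounded Borel functions, and finally to essentially bounded measurable functions.

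Concretely, given $\overline{Q}$ as in the definition, I define a set function $Q'$ on Borel subsets $A \subset \PP_\Sigma$ by
\[
  Q'(A) = \int_{\PP(\Sigma)} \mu\bigl(\{\iii \in \Sigma : (\mu,\iii) \in A\}\bigr) \dd\overline{Q}(\mu).
\]
To see this is well-defined, one checks that $\mu \mapsto \mu(\{\iii : (\mu,\iii) \in A\})$ is $\overline{Q}$-measurable. Since $\mu \mapsto \mu(V)$ is Borel on $\PP(\Sigma)$ for every Borel $V \subset \Sigma$ (it is lower semicontinuous when $V$ is open, and the class of $V$ for which it is Borel is closed under complements and countable disjoint unions), the measurability holds for measurable rectangles $A = U \times V$, and then the monotone class theorem extends it to all Borel $A \subset \PP_\Sigma$. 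Thus $Q'$ is a Borel probability measure on $\PP_\Sigma$, and by Fubini the right-hand side of \eqref{eq:adapted} is precisely $\int f \dd Q'$.

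The hypothesis that $Q$ is adapted therefore gives $\int f \dd Q = \int f \dd Q'$ for every $f \in C(\PP_\Sigma)$. Since $\PP_\Sigma$ is a compact metric space, a Borel probability measure is uniquely determined by its integrals against continuous functions, so $Q = Q'$ as Borel probability measures. Consequently, for every bounded Borel $f$ on $\PP_\Sigma$ we have $\int f \dd Q = \int f \dd Q'$, which is exactly the identity we wanted. For an essentially bounded measurable $f$, we modify $f$ on the $Q$-null set where it fails to be bounded to obtain a bounded Borel representative $\tilde f$; because $Q = Q'$, the same set is null for the right-hand integral, so the identity for $\tilde f$ transfers back to $f$.

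The only real (and mild) obstacle is the measurability of $\mu \mapsto \mu(\{\iii : (\mu,\iii) \in A\})$ required to make $Q'$ a legitimate Borel measure; everything else is the standard passage from continuous to bounded measurable integrands via uniqueness of Borel measures on a compact metric space.
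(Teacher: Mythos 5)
Your proof is correct, but it takes a genuinely different route from the paper's. You construct the Borel probability measure $Q'$ directly via a Fubini-type disintegration (checking measurability of $\mu \mapsto \mu(\{\iii : (\mu,\iii) \in A\})$ by a monotone class argument starting from measurable rectangles), observe that $Q$ and $Q'$ integrate continuous functions identically, and invoke the fact that a Borel probability measure on a compact metric space is determined by its action on $C(\PP_\Sigma)$ to conclude $Q = Q'$; the identity for bounded Borel (and then essentially bounded) integrands is then automatic. The paper instead argues in two hands-on approximation steps: first it proves the identity for indicators of open sets by sandwiching with Urysohn functions and applying dominated convergence, and then it handles a general bounded measurable $f$ via Lusin's theorem and the Tietze extension theorem, estimating the error by the measure of the complement of the Lusin compactum. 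Your approach is conceptually cleaner and makes explicit the useful fact that $Q$ literally equals the disintegrated measure $Q'$, at the price of quoting the uniqueness-of-Borel-measures fact and the Fubini-style measurability lemma as black boxes; the paper's approach is more elementary and self-contained, never needing to verify that the right-hand side defines a genuine Borel measure. One minor point you gloss over: the final passage from $Q = Q'$ to the iterated-integral identity for bounded Borel $f$ requires the same measurability-plus-monotone-class argument you used to define $Q'$ (to see $\int f \dd Q' = \int_{\PP(\Sigma)} \int_\Sigma f(\mu,\iii) \dd\mu(\iii) \dd\overline{Q}(\mu)$ beyond continuous $f$), so it is worth stating that explicitly rather than only citing Fubini for continuous integrands.
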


\begin{proof}
  Since the other direction is trivial let us assume that $Q$ is adapted and $\overline{Q}$ is as in \eqref{eq:adapted}. As a first step, we will show that the claim holds for the indicator function of an open set. The proof of the claim then follows from Lusin's theorem.
  
  Let $U \subset \PP_\Sigma$ be an open set and choose an increasing sequence $K_1 \subset K_2 \subset \cdots$ of closed sets such that $\bigcup_{n=1}^\infty K_n = U$. For each $n \in \N$, by Urysohn's lemma, there exists a continuous function $f_n$ defined on $\PP_\Sigma$ such that $f_n(\mu,\iii) = 1$ for all $(\mu,\iii) \in K_n$ and $f_n(\mu,\iii) = 0$ for all $(\mu,\iii) \in \PP_\Sigma \setminus U$. Now, by applying the dominated convergence theorem and \eqref{eq:adapted}, we get
  \begin{equation} \label{eq:adapted3}
  \begin{split}
    Q(U) &= \lim_{n \to \infty} \int_{\PP_\Sigma} f_n(\mu,\iii) \dd Q(\mu,\iii) = \lim_{n \to \infty} \int_{\PP(\Sigma)} \int_{\Sigma} f_n(\mu,\iii) \dd\mu(\iii) \dd\overline{Q}(\mu) \\
    &= \int_{\PP(\Sigma)} \int_{\Sigma} \chi_U(\mu,\iii) \dd\mu(\iii) \dd\overline{Q}(\mu).
  \end{split}
  \end{equation}
  Fix $\eps>0$ and let $M \in \N$ be such that $|f(\mu,\iii)| \le M$ for $Q$-almost all $(\mu,\iii) \in \PP_\Sigma$. By Lusin's theorem, there exists a compact set $K \subset \PP_\Sigma$ with $Q(K) \ge 1-(4M)^{-1}\eps$ so that $f|_K$ is continuous. Now, by the Tietze extension theorem, there exists a continuous function $g$ defined on $\PP_\Sigma$ such that $g|_K = f|_K$ and $|g(\mu,\iii)| \le M$ for all $(\mu,\iii) \in \PP_\Sigma$. Applying \eqref{eq:adapted} and \eqref{eq:adapted3}, we get
  \begin{align*}
    \Bigl| \int_{\PP_\Sigma} f(\mu,\iii) &\dd Q(\mu,\iii) - \int_{\PP(\Sigma)}\int_\Sigma f(\mu,\iii) \dd\mu(\iii)\dd\overline{Q}(\mu) \Bigr| \\
    &\le \Bigl| \int_{\PP_\Sigma} f(\mu,\iii) \dd Q(\mu,\iii) - \int_{\PP_\Sigma} g(\mu,\iii) \dd Q(\mu,\iii) \Bigr| \\
    &\qquad\quad+ \Bigl| \int_{\PP(\Sigma)}\int_\Sigma g(\mu,\iii) \dd\mu(\iii)\dd\overline{Q}(\mu) - \int_{\PP(\Sigma)}\int_\Sigma f(\mu,\iii) \dd\mu(\iii)\dd\overline{Q}(\mu) \Bigr| \\
    &\le \int_{\PP_\Sigma} |f(\mu,\iii)-g(\mu,\iii)| \dd Q(\mu,\iii) + \int_{\PP(\Sigma)} \int_\Sigma |g(\mu,\iii)-f(\mu,\iii)| \dd\mu(\iii)\dd\overline{Q}(\mu) \\
    &\le 2MQ(\PP_\Sigma \setminus K) + 2M\int_{\PP(\Sigma)}\int_\Sigma \chi_{\PP_\Sigma \setminus K}(\mu,\iii) \dd\mu(\iii)\dd\overline{Q}(\mu) \\
    &= 4MQ(\PP_\Sigma \setminus K) \le \eps.
  \end{align*}
  This is what we wanted to show.
\end{proof}

\begin{lemma} \label{thm:adapted_omega}
  If $Q$ is an adapted distribution on $\PP_\Sigma$, then $Q(\Omega) = 1$.
\end{lemma}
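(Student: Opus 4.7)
The plan is to reduce the claim to a countable union of sets, each of which is controlled by the single-level cylinder measure being zero, and then apply Lemma \ref{thm:adapted_def} to the indicator of that set.

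First, I would write $\Omega^c = \bigcup_{n=0}^\infty A_n$, where
\begin{equation*}
  A_n = \{(\mu,\iii) \in \PP_\Sigma : \mu([\iii|_n]) = 0\},
\end{equation*}
which follows directly from the representation of $\Omega$ given earlier as $\bigcap_{n=0}^\infty \{(\mu,\iii) \in \PP_\Sigma : \mu([\iii|_n]) > 0\}$. By countable subadditivity it therefore suffices to show $Q(A_n) = 0$ for every $n \in \N$.

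Next, since $\chi_{A_n}$ is bounded and Borel measurable (the map $(\mu,\iii) \mapsto \mu([\iii|_n])$ is Borel, being continuous on $\PP_\Sigma$), Lemma \ref{thm:adapted_def} applies and gives
\begin{equation*}
  Q(A_n) = \int_{\PP(\Sigma)} \int_\Sigma \chi_{A_n}(\mu,\iii) \dd\mu(\iii) \dd\overline{Q}(\mu) = \int_{\PP(\Sigma)} \mu\bigl(\{\iii \in \Sigma : \mu([\iii|_n]) = 0\}\bigr) \dd\overline{Q}(\mu).
\end{equation*}

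Finally, for each fixed $\mu \in \PP(\Sigma)$, observe that
\begin{equation*}
  \{\iii \in \Sigma : \mu([\iii|_n]) = 0\} = \bigcup_{\substack{\omega \in \Sigma_n \\ \mu([\omega]) = 0}} [\omega],
\end{equation*}
which is a union of at most $N^n$ cylinders, each of $\mu$-measure zero. Hence the inner integrand vanishes identically, so $Q(A_n) = 0$ for each $n$ and consequently $Q(\Omega) = 1$. There is no real obstacle: the only point requiring care is that the adapted condition is stated in its original form for continuous test functions, and one must invoke Lemma \ref{thm:adapted_def} to legitimize its use on the bounded Borel function $\chi_{A_n}$.
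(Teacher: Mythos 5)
Your proof is correct and follows essentially the same route as the paper: both reduce to Lemma~\ref{thm:adapted_def} applied to the indicator of the complement of $\Omega$, and then observe that for each fixed $\mu$ the inner integral vanishes because a measure assigns zero mass to the complement of its support. The only cosmetic difference is that you decompose $\PP_\Sigma \setminus \Omega$ into the countable union of the sets $A_n$ before integrating, whereas the paper integrates $\chi_{\PP_\Sigma\setminus\Omega}$ directly and uses $\int_\Sigma \chi_{\Sigma\setminus\spt(\mu)}\,\dd\mu = 0$; the substance is identical.
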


\begin{proof}
  Since
  \begin{equation*}
    \PP_\Sigma \setminus \Omega = \bigcup_{\mu \in \PP(\Sigma)} \{ \mu \} \times (\Gamma \setminus \spt(\mu)) = \{ (\mu,\iii) \in \PP_\Sigma : \iii \notin \spt(\mu) \}
  \end{equation*}
  Lemma \ref{thm:adapted_def} gives
  \begin{equation*}
    \int_{\PP_\Sigma} \chi_{\PP_\Sigma \setminus \Omega}(\mu,\iii) \dd Q(\mu,\iii) = \int_{\PP(\Sigma)} \int_\Sigma \chi_{\Sigma \setminus \spt(\mu)}(\iii) \dd\mu(\iii)\dd\overline{Q}(\mu) = 0
  \end{equation*}
  as claimed.
\end{proof}

\begin{lemma} \label{thm:adapted2}
  A distribution $Q$ on $\PP_\Sigma$ is adapted if and only if there exists a distribution $\overline{Q}$ on $\PP(\Sigma)$ such that
  \begin{equation} \label{eq:adapted2}
    \int_{\PP_\Sigma} f(\mu)g(\iii) \dd Q(\mu,\iii) = \int_{\PP(\Sigma)} f(\mu) \int_\Sigma g(\iii) \dd\mu(\iii) \dd\overline{Q}(\mu)
  \end{equation}
  for all $f \in C(\PP(\Sigma))$ and $g \in C(\Sigma)$.
\end{lemma}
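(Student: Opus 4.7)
The forward implication is immediate: if $Q$ is adapted with accompanying distribution $\overline{Q}$ on $\PP(\Sigma)$, then \eqref{eq:adapted} applied to the continuous function $F(\mu,\iii) = f(\mu)g(\iii)$ (which is continuous on the compact product space $\PP_\Sigma$) yields \eqref{eq:adapted2}. So the real content is the converse: from identity \eqref{eq:adapted2} on tensor-product test functions we must recover \eqref{eq:adapted} for every $F \in C(\PP_\Sigma)$. My plan is to upgrade via a Stone--Weierstrass density argument.

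First I would set up the ambient compactness: since $\Sigma$ is compact, so is $\PP(\Sigma)$ in the weak topology, and hence $\PP_\Sigma = \PP(\Sigma) \times \Sigma$ is a compact Hausdorff space. Let $\AA \subset C(\PP_\Sigma)$ be the linear span of functions of the form $(\mu,\iii) \mapsto f(\mu)g(\iii)$ with $f \in C(\PP(\Sigma))$, $g \in C(\Sigma)$. Then $\AA$ is a subalgebra, because $f_1(\mu)g_1(\iii)\cdot f_2(\mu)g_2(\iii) = (f_1f_2)(\mu)(g_1g_2)(\iii)$ again lies in $\AA$; it contains the constants (take $f\equiv g\equiv 1$); and it separates points, since for $(\mu_1,\iii_1) \neq (\mu_2,\iii_2)$ either $\mu_1 \ne \mu_2$, separated by some $f \in C(\PP(\Sigma))$ together with $g \equiv 1$, or $\iii_1 \ne \iii_2$, separated by some $g \in C(\Sigma)$ together with $f\equiv 1$. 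Stone--Weierstrass then gives that $\AA$ is uniformly dense in $C(\PP_\Sigma)$.

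By linearity, \eqref{eq:adapted2} extends to every $F \in \AA$, giving $\int_{\PP_\Sigma} F \dd Q = \int_{\PP(\Sigma)} \int_\Sigma F(\mu,\iii) \dd\mu(\iii)\dd\overline{Q}(\mu)$. Now, for arbitrary $F \in C(\PP_\Sigma)$, take $F_n \in \AA$ with $\|F_n - F\|_\infty \to 0$. The left-hand side passes to the limit because $Q$ is a probability measure and convergence is uniform. For the right-hand side, note that for fixed $F \in C(\PP_\Sigma)$ the map $\Phi_F \colon \mu \mapsto \int_\Sigma F(\mu,\iii) \dd\mu(\iii)$ is continuous on $\PP(\Sigma)$: by compactness $F$ is uniformly continuous, so for $\mu_n \to \mu$ weakly one splits
\[
  |\Phi_F(\mu_n) - \Phi_F(\mu)| \le \int_\Sigma |F(\mu_n,\iii) - F(\mu,\iii)| \dd\mu_n(\iii) + \Bigl| \int_\Sigma F(\mu,\iii)\dd\mu_n(\iii) - \int_\Sigma F(\mu,\iii)\dd\mu(\iii) \Bigr|,
\]
where the first term is small by uniform continuity and the second by weak convergence. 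Moreover $\|\Phi_{F_n} - \Phi_F\|_\infty \le \|F_n - F\|_\infty \to 0$, so we may pass to the limit in the outer $\overline{Q}$-integral as well, obtaining \eqref{eq:adapted} for every $F \in C(\PP_\Sigma)$, i.e.\ $Q$ is adapted.

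The only step that requires any care is the continuity of $\Phi_F$ in $\mu$; everything else is a routine Stone--Weierstrass density followed by uniform approximation, so I expect no serious obstacle.
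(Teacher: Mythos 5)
Your proof is correct, but it takes a genuinely different route from the paper. The paper exploits the totally disconnected structure of $\Sigma$: given $F \in C(\PP_\Sigma)$ it forms the increasing approximations $F_n(\mu,\iii) = \sum_{\jjj \in \Sigma_n} \min\{ F(\mu,\hhh) : \hhh \in [\jjj] \}\chi_{[\jjj]}(\iii)$, which are finite sums of tensor products (cylinders are clopen, so $\chi_{[\jjj]} \in C(\Sigma)$), and passes to the limit by monotone convergence on both sides. You instead invoke Stone--Weierstrass on the compact Hausdorff space $\PP(\Sigma) \times \Sigma$ to get \emph{uniform} density of the algebra spanned by tensor products, then pass to the limit via uniform approximation, observing that $\Phi_F(\mu) = \int_\Sigma F(\mu,\iii)\,\mathrm{d}\mu(\iii)$ is continuous (in fact it suffices that $\Phi_F$ is the uniform limit of the continuous $\Phi_{F_n}$, which your bound $\|\Phi_{F_n}-\Phi_F\|_\infty \le \|F_n-F\|_\infty$ already gives, so the explicit uniform-continuity splitting is a little more than strictly necessary). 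Your approach is more portable --- it works for any compact metric space in place of $\Sigma$ --- while the paper's is more hands-on and tailored to the shift space, giving explicit monotone approximants that are reused in later lemmas (e.g.\ in the proof of Lemma \ref{thm:invariant_adapted}). Both are sound.
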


\begin{proof}
  Since adaptedness clearly implies \eqref{eq:adapted2} let us assume that there exists a distribution $\overline{Q}$ on $\PP(\Sigma)$ such that \eqref{eq:adapted2} holds for all functions in $C(\PP(\Sigma))$ and $C(\Sigma)$. Fix $f \in C(\PP_\Sigma)$ and for each $n \in \N$ define a function $f_n \colon \PP_\Sigma \to \R$ by setting
  \begin{equation*}
    f_n(\mu,\iii) = \sum_{\jjj \in \Sigma_n} \min\{ f(\mu,\hhh) : \hhh \in [\jjj] \} \cdot \chi_{[\jjj]}(\iii)
  \end{equation*}
  for all $(\mu,\iii) \in \PP_\Sigma$. Observe that $(f_n)_{n \in \N}$ is an increasing sequence of continuous functions such that $f_n \to f$ as $n \to \infty$. Moreover, each $f_n$ is defined to be a sum of products, where each product is between functions in $C(\PP(\Sigma))$ and $C(\Sigma)$. Therefore, by the monotone convergence theorem and \eqref{eq:adapted2}, we have
  \begin{align*}
    \int_{\PP_\Sigma} f(\mu,\iii) \dd Q(\mu,\iii) &= \lim_{n \to \infty} \sum_{\jjj \in \Sigma_n} \int_{\PP_\Sigma} \min\{ f(\mu,\hhh) : \hhh \in [\jjj] \} \cdot \chi_{[\jjj]}(\iii) \dd Q(\mu,\iii) \\
    &= \lim_{n \to \infty} \sum_{\jjj \in \Sigma_n} \int_{\PP(\Sigma)} \int_{\Sigma} \min\{ f(\mu,\hhh) : \hhh \in [\jjj] \} \cdot \chi_{[\jjj]}(\iii) \dd\mu(\iii) \dd\overline{Q}(\mu) \\
    &= \int_{\PP(\Sigma)} \int_\Sigma f(\mu,\iii) \dd(\iii) \dd\overline{Q}(\mu).
  \end{align*}
  This is what we wanted to show.
\end{proof}

\begin{lemma} \label{thm:lemma15}
  The family of adapted distributions on $\PP_\Sigma$ is convex and compact.
\end{lemma}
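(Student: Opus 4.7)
The plan is to verify convexity directly from the definition and deduce compactness by showing that the set of adapted distributions is a closed subset of the compact space $\PP(\PP_\Sigma)$ (note that $\PP_\Sigma = \PP(\Sigma) \times \Sigma$ is compact since $\Sigma$ is compact and hence $\PP(\Sigma)$ is compact in the weak topology).

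For convexity, if $Q_1, Q_2$ are adapted with witnessing distributions $\overline{Q}_1, \overline{Q}_2$, and $\lambda \in [0,1]$, then linearity of both sides of \eqref{eq:adapted} shows that $\lambda Q_1 + (1-\lambda)Q_2$ is adapted with witness $\lambda \overline{Q}_1 + (1-\lambda)\overline{Q}_2$. This step is immediate.

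For compactness, I would take a sequence of adapted distributions $(Q_n)$ converging weakly to some $Q \in \PP(\PP_\Sigma)$, and find a witness $\overline{Q}$ for $Q$. Let $\overline{Q}_n$ be the witnesses provided by adaptedness of $Q_n$. Since $\PP(\PP(\Sigma))$ is compact, I can pass to a subsequence (not relabelled) so that $\overline{Q}_n \to \overline{Q}$ weakly. It then suffices to check \eqref{eq:adapted} for $Q$ and $\overline{Q}$. The key observation, and the main technical step, is that for each fixed $f \in C(\PP_\Sigma)$ the function
\begin{equation*}
  g(\mu) = \int_\Sigma f(\mu,\iii) \dd\mu(\iii)
\end{equation*}
is continuous on $\PP(\Sigma)$. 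To see this, fix $\mu_k \to \mu$ weakly; then $|g(\mu_k)-g(\mu)|$ is bounded by
\begin{equation*}
  \Bigl|\int f(\mu_k,\iii)\dd\mu_k(\iii) - \int f(\mu,\iii)\dd\mu_k(\iii)\Bigr| + \Bigl|\int f(\mu,\iii)\dd\mu_k(\iii) - \int f(\mu,\iii)\dd\mu(\iii)\Bigr|.
\end{equation*}
The second term tends to $0$ because $f(\mu,\cdot) \in C(\Sigma)$ and $\mu_k \to \mu$ weakly. For the first term, the compactness of $\PP_\Sigma$ makes $f$ uniformly continuous, so $\sup_{\iii} |f(\mu_k,\iii)-f(\mu,\iii)| \to 0$, which forces the first term to $0$ as well.

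With continuity of $g$ in hand, the weak convergence $\overline{Q}_n \to \overline{Q}$ gives $\int g \dd\overline{Q}_n \to \int g \dd\overline{Q}$, while the weak convergence $Q_n \to Q$ together with $f \in C(\PP_\Sigma)$ gives $\int f \dd Q_n \to \int f \dd Q$. Since $Q_n$ is adapted, $\int f \dd Q_n = \int g \dd\overline{Q}_n$ for each $n$, so passing to the limit yields
\begin{equation*}
  \int_{\PP_\Sigma} f(\mu,\iii) \dd Q(\mu,\iii) = \int_{\PP(\Sigma)} \int_\Sigma f(\mu,\iii)\dd\mu(\iii)\dd\overline{Q}(\mu)
\end{equation*}
for every $f \in C(\PP_\Sigma)$, establishing that $Q$ is adapted. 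The main obstacle is verifying the continuity of $\mu \mapsto g(\mu)$, which is the only place where the joint continuity of $f$ (not merely separate continuity in the two variables) is essential.
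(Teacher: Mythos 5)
Your proposal is correct and follows the same overall strategy as the paper: convexity by linearity of both sides of \eqref{eq:adapted}, and compactness by exhibiting the family as a closed subset of the compact space $\PP(\PP_\Sigma)$, taking a subsequential weak limit of the witness distributions $\overline{Q}_n$.

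The one place you diverge from the paper is in the technical verification that the limit $Q$ is adapted. The paper invokes its earlier reduction (Lemma \ref{thm:adapted2}) and tests adaptedness only against tensor-product functions $(\mu,\iii) \mapsto f(\mu)g(\iii)$; there the relevant functional $\mu \mapsto f(\mu)\int_\Sigma g\dd\mu$ is continuous essentially by the definition of weak convergence, so passing to the limit is immediate. You instead verify \eqref{eq:adapted} directly for arbitrary $f \in C(\PP_\Sigma)$, which requires you to prove that $\mu \mapsto \int_\Sigma f(\mu,\iii)\dd\mu(\iii)$ is continuous; your argument via uniform continuity of $f$ on the compact product space $\PP_\Sigma$ is correct and standard. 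Your route bypasses Lemma \ref{thm:adapted2} (making the compactness proof more self-contained) at the cost of the extra two-term splitting and uniform-continuity step; the paper's route is shorter once the reduction lemma is in hand, since it has already been used elsewhere. Both are sound.
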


\begin{proof}
  To show the convexity, assume that $Q_1$ and $Q_2$ are adapted distributions on $\PP_\Sigma$. Fix $0<\lambda<1$ and let $P = \lambda Q_1 + (1-\lambda) Q_2$. Since
  \begin{align*}
    \int_{\PP_\Sigma} f(\mu,\iii) \dd P(\mu,\iii) &= \lambda \int_{\PP_\Sigma} f(\mu,\iii) \dd Q_1(\mu,\iii) + (1-\lambda)\int_{\PP_\Sigma} f(\mu,\iii) \dd Q_2(\mu,\iii) \\
    &= \lambda \int_{\PP(\Sigma)} \int_{\Sigma} f(\mu,\iii) \dd\mu(\iii) \dd\overline{Q}_1(\mu) \\
    &\qquad\quad + (1-\lambda) \int_{\PP(\Sigma)} \int_{\Sigma} f(\mu,\iii) \dd\mu(\iii) \dd\overline{Q}_2(\mu) \\
    &= \int_{\PP(\Sigma)} \int_{\Sigma} f(\mu,\iii) \dd\mu(\iii) \dd(\lambda\overline{Q}_1 + (1-\lambda)\overline{Q}_2)(\mu)
  \end{align*}
  for all $f \in C(\PP_\Sigma)$ we see that also $P$ is an adapted distribution on $\PP_\Sigma$.
  
  To show the compactness, let $(Q_n)_{n \in \N}$ be a sequence of adapted distributions on $\PP_\Sigma$. For each $Q_n$ let $\overline{Q}_n$ be a distribution on $\PP(\Sigma)$ as in Lemma \ref{thm:adapted2}. Since $\PP_\Sigma$ and $\PP(\Sigma)$ are compact we may, by going into a subsequence if necessary, assume that there exist distributions $Q_0$ on $\PP_\Sigma$ and $\overline{Q}_0$ on $\PP(\Sigma)$ such that $Q_n \to Q_0$ and $\overline{Q}_n \to \overline{Q}_0$ as $n \to \infty$. According to Lemma \ref{thm:adapted2},
  \begin{equation*}
    \int_{\PP_\Sigma} f(\mu)g(\iii) \dd Q_n(\mu,\iii) = \int_{\PP(\Sigma)} f(\mu) \int_\Sigma g(\iii) \dd\mu(\iii) \dd\overline{Q}_n(\mu)
  \end{equation*}
  for all $f \in C(\PP(\Sigma))$, $g \in C(\Sigma)$, and $n \in \N$. Since the function $\mu \mapsto \int_\Sigma g(\iii) \dd\mu(\iii)$ defined on $\PP(\Sigma)$ is continuous we see that, by letting $n \to \infty$, also $Q_0$ is adapted.
\end{proof}

\subsection{Symbolic CP-distributions}
If $\mu \in \PP(\Sigma)$, then for each $\iii \in \Sigma_*$ we define a measure $\mu_\iii \in \PP(\Sigma)$ by setting
\begin{equation*}
  \mu_\iii([\jjj]) =
  \begin{cases}
    \mu([\iii\jjj])/\mu([\iii]), &\text{if } \mu([\iii])>0, \\
    0, &\text{otherwise},
  \end{cases}
\end{equation*}
for all $\jjj \in \Sigma_*$. We define a mapping $M \colon \PP_\Sigma \to \PP_\Sigma$ by setting $M(\mu,\iii) = (\mu_{\iii|_1},\sigma(\iii))$ for all $(\mu,\iii) \in \PP_\Sigma$. Observe that if $\mu([\iii\jjj])>0$, then
\begin{equation*}
  (\mu_\iii)_\jjj([\hhh]) = \frac{\mu_\iii([\jjj\hhh])}{\mu_\iii([\jjj])} = \frac{\mu([\iii\jjj\hhh])}{\mu([\iii\jjj])} = \mu_{\iii\jjj}([\hhh]).
\end{equation*}
Thus $M^k(\mu,\iii) = (\mu_{\iii|_k},\sigma^k(\iii))$ for all $(\mu,\iii) \in \PP_\Sigma$ and $k \in \N$. Furthermore, if $\mu([\iii|_n]) > 0$ for all $n \in \N$, then also $\mu_{\iii|_1}([\sigma(\iii)|_n]) > 0$ for all $n \in \N$. Hence $M(\Omega) \subset \Omega$ where $\Omega$ is defined in \eqref{eq:omegagamma}. It follows that $M$ restricted to $\Omega$ is continuous.

\begin{lemma} \label{thm:invariant_adapted}
  If $Q$ is an adapted distribution on $\PP_\Sigma$, then $MQ$ is an adapted distribution on $\PP_\Sigma$.
\end{lemma}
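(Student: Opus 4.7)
The plan is to exhibit an explicit candidate for $\overline{MQ}$ and then verify the defining identity \eqref{eq:adapted} by direct computation, using Lemma \ref{thm:adapted_def} to move freely between continuous and bounded measurable integrands. Given $\overline{Q}$ associated to $Q$, define
\begin{equation*}
  \overline{MQ} = \int_{\PP(\Sigma)} \sum_{a=1}^{N} \mu([a])\,\delta_{\mu_a}\,\dd\overline{Q}(\mu),
\end{equation*}
where the sum is understood to omit any $a$ with $\mu([a])=0$. This is the distribution obtained by drawing $\mu$ according to $\overline{Q}$ and then choosing one of the conditional measures $\mu_a$ with probability $\mu([a])$.

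Now fix $f\in C(\PP_\Sigma)$ and compute $\int f\,\dd MQ = \int f\circ M\,\dd Q$. The composition $f\circ M$ need not be continuous on all of $\PP_\Sigma$, but $M$ is continuous on $\Omega$ and by Lemma \ref{thm:adapted_omega} we have $Q(\Omega)=1$, so $f\circ M$ is bounded and measurable. Hence Lemma \ref{thm:adapted_def} applies and gives
\begin{equation*}
  \int_{\PP_\Sigma} f\circ M \,\dd Q = \int_{\PP(\Sigma)}\int_{\Sigma} f(\mu_{\iii|_1},\sigma(\iii))\,\dd\mu(\iii)\,\dd\overline{Q}(\mu).
\end{equation*}
Splitting $\Sigma = \bigsqcup_{a=1}^{N}[a]$ and using that on $[a]$ the first coordinate is constantly $\mu_a$, together with the standard identity $\sigma_*(\mu|_{[a]}) = \mu([a])\,\mu_a$, the inner integral rearranges as
\begin{equation*}
  \int_{\Sigma} f(\mu_{\iii|_1},\sigma(\iii))\,\dd\mu(\iii) = \sum_{a=1}^{N}\mu([a])\int_{\Sigma} f(\mu_a,\jjj)\,\dd\mu_a(\jjj).
\end{equation*}
Substituting back and using the definition of $\overline{MQ}$ yields
\begin{equation*}
  \int_{\PP_\Sigma} f\,\dd MQ = \int_{\PP(\Sigma)}\int_{\Sigma} f(\nu,\jjj)\,\dd\nu(\jjj)\,\dd\overline{MQ}(\nu),
\end{equation*}
which is precisely the adaptedness relation \eqref{eq:adapted} for $MQ$ with witness $\overline{MQ}$.

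The only delicate point is the first step, where one needs Lemma \ref{thm:adapted_def} to justify applying the adaptedness of $Q$ to the non-continuous integrand $f\circ M$; everything else is a bookkeeping computation based on the disintegration of $\mu$ over its first-symbol cylinders. One should also observe, for the definition of $\overline{MQ}$ to make sense, that $\mu\mapsto\sum_a\mu([a])\delta_{\mu_a}$ is Borel measurable from $\PP(\Sigma)$ to $\PP(\PP(\Sigma))$, which follows from the continuity of $\mu\mapsto\mu([a])$ and the measurability of $\mu\mapsto\mu_a$ on the set $\{\mu([a])>0\}$.
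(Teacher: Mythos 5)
Your proof is correct, and it takes a genuinely more direct route than the paper's. The paper never writes down $\overline{MQ}$ explicitly: instead it introduces the linear operator $Th(\mu)=\sum_i\mu([i])h(\mu_i)$ on $C(\PP(\Sigma))$, invokes the Riesz representation theorem to obtain an adjoint $T^*$ on the dual, checks separately that $T^*$ carries probability measures to probability measures (rather than signed measures), and then identifies $\hat Q = T^*\overline{Q}$. It also first reduces, via Lemma~\ref{thm:adapted2}, to test functions of product form $f(\mu)g(\iii)$, and approximates $g$ by cylinder step functions $g_n$ before handling the measurability of $(\mu,\iii)\mapsto f(\mu_{\iii|_1})g_n(\sigma(\iii))$ with Lemma~\ref{thm:adapted_def}. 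You shortcut all of this by writing $\overline{MQ}=\int\sum_a\mu([a])\,\delta_{\mu_a}\,\dd\overline{Q}(\mu)$ outright — which is precisely $T^*\overline{Q}$, but now manifestly a probability distribution since each $\sum_a\mu([a])\delta_{\mu_a}$ is one — and by applying Lemma~\ref{thm:adapted_def} directly to the bounded measurable function $f\circ M$ for an arbitrary $f\in C(\PP_\Sigma)$, bypassing both the product reduction and the step-function approximation. The one computational ingredient you share with the paper is the disintegration of the inner integral over first-symbol cylinders using $\sigma_*(\mu|_{[a]})=\mu([a])\mu_a$. The explicit-barycenter formulation is cleaner and makes the well-definedness and measurability of $\overline{MQ}$ transparent, at no loss of rigor.
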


\begin{proof}
  Fix $f \in C(\PP(\Sigma))$ and $g \in C(\Sigma)$. For each $n \in \N$ define a function $g_n \colon \Sigma \to \R$ by setting
  \begin{equation*}
    g_n(\iii) = \sum_{\jjj \in \Sigma_n} \min\{ g(\hhh) : \hhh \in [\jjj] \} \cdot \chi_{[\jjj]}
  \end{equation*}
  for all $\iii \in \Sigma$. Observe that $(g_n)_{n \in \N}$ is an increasing sequence of continuous functions such that $g_n \to g$ as $n \to \infty$. Thus, if we are able to show that there exists a distribution $\hat Q$ on $\PP(\Sigma)$ such that
  \begin{equation*}
    \int_{\PP_\Sigma} f(\mu) g_n(\iii) \dd MQ(\mu,\iii) = \int_{\PP(\Sigma)} f(\mu) \int_\Sigma g_n(\iii) \dd\mu(\iii) \dd\hat Q(\mu)
  \end{equation*}
  for all $n \in \N$, then the claim follows from the monotone convergence theorem and Lemma \ref{thm:adapted2}.
  
  Fix $n \in \N$ and define a linear operator $T \colon C(\PP(\Sigma)) \to C(\PP(\Sigma))$ by setting
  \begin{equation*}
    Th(\mu) = \sum_{i \in \{1,\ldots,N\}} \mu([i]) h(\mu_i)
  \end{equation*}
  for all $h \in C(\PP(\Sigma))$. By the Riesz representation theorem, for every positive continuous linear function $\Lambda \colon C(\PP(\Sigma)) \to \R$ with dual norm one, there exists a unique $\overline{Q} \in \PP(\PP(\Sigma))$ such that $\Lambda(h) = \int_{\PP(\Sigma)} h(\mu) \dd\overline{Q}(\mu)$ for all $h \in C(\PP(\Sigma))$. Thus there exists an adjoint operator $T^* \colon \PP(\PP(\Sigma)) \to \PP(\PP(\Sigma))$ such that
  \begin{equation} \label{eq:adjoint}
    \int_{\PP(\Sigma)} Th(\mu) \dd\overline{Q}(\mu) = \int_{\PP(\Sigma)} h(\mu) \dd T^*\overline{Q}(\mu)
  \end{equation}
  for all $h \in C(\PP(\Sigma))$ and $\overline{Q} \in \PP(\PP(\Sigma))$; see e.g.\ \cite[Theorem 2.14]{Rudin1966} and \cite[Theorem 4.10]{Rudin1973}. Observe that to apply the Riesz representation theorem, we momentarily worked with signed measures. Let us show that $T^*$ acts on $\PP(\PP(\Sigma))$. Fix $\overline{Q} \in \PP(\PP(\Sigma))$. If there are $A \subset \PP(\Sigma)$ and $\delta>0$ so that $T^*\overline{Q}(A)=-\delta$, then, by choosing a positive function $h \in C(\PP(\Sigma))$ such that $\int_{\PP(\Sigma)} |h(\mu)-\chi_A(\mu)| \dd T^*\overline{Q}(\mu) \le \delta/2$, we see that
  \begin{equation*}
    \delta \le \int_{\PP(\Sigma)} Th(\mu) \dd\overline{Q}(\mu) - T^*\overline{Q}(A) 
    \le \int_{\PP(\Sigma)} |h(\mu)-\chi_A(\mu)| \dd T^*\overline{Q}(\mu) \le \delta/2.
  \end{equation*}
  This contradiction shows that we may restrict our analysis to measures.
  
  Let $h \in C(\PP(\Sigma))$ be so that
  \begin{equation*}
    h(\mu) = f(\mu) \int_\Sigma g_n(\iii) \dd\mu(\iii)
  \end{equation*}
  for all $\mu \in \PP(\Sigma)$. Recalling that $Q$ is adapted, we choose $\overline{Q} \in \PP(\PP(\Sigma))$ so that it satisfies \eqref{eq:adapted}. Since each $\mu \in \PP(\Sigma)$ is a measure on $\Sigma$ with $\spt(\mu) \subset \Sigma$ we get
  \begin{equation} \label{eq:lasku}
  \begin{split}
    Th(\mu) &= \sum_{i \in \{1,\ldots,N\}} \mu([i]) h(\mu_i) \\
    &= \sum_{i \in \{1,\ldots,N\}} \mu([i]) f(\mu_i) \sum_{\jjj \in \Sigma_n} \min\{ g(\hhh) : \hhh \in [\jjj] \} \cdot \mu_i([\jjj]) \\
    &= \sum_{i \in \{1,\ldots,N\}} f(\mu_i) \sum_{\jjj \in \Sigma_n} \min\{ g(\hhh) : \hhh \in [\jjj] \} \cdot \mu([i\jjj]) \\
    &= \sum_{i \in \{1,\ldots,N\}} f(\mu_i) \int_{[i]} g_n(\sigma(\iii)) \dd\mu(\iii) \\
    &= \int_\Sigma f(\mu_{\iii|_1}) g(\sigma(\iii)) \dd\mu(\iii)
  \end{split}
  \end{equation}
  for all $\mu \in \PP(\Sigma)$.
  Observe that since $\mu_n \to \mu$ weakly if and only if $\mu_n([\iii]) \to \mu([\iii])$ for all $\iii \in \Sigma_*$ the function $\mu \mapsto f(\mu_i)$ is continuous whenever $\mu([i])>0$. Thus the bounded function $(\mu,\iii) \mapsto f(\mu_{\iii|_1}) g(\sigma(\iii))$ defined on $\PP_\Sigma$ is measurable since its restriction to $\Omega$ is continuous and $\Omega$ has full measure by Lemma \ref{thm:adapted_omega}. Now, by Lemma \ref{thm:adapted_def}, \eqref{eq:lasku}, and \eqref{eq:adjoint}, we get
  \begin{align*}
    \int_{\PP_\Sigma} f(\mu) g_n(\iii) \dd MQ(\mu,\iii) &= \int_{\PP_\Sigma} f(\mu_{\iii|_1}) g(\sigma(\iii)) \dd Q(\mu,\iii) \\
    &= \int_{\PP(\Sigma)} \int_\Sigma f(\mu_{\iii|_1}) g(\sigma(\iii)) \dd\mu(\iii) \dd\overline{Q}(\mu) \\
    &= \int_{\PP(\Sigma)} Th(\mu) \dd\overline{Q}(\mu)
    = \int_{\PP(\Sigma)} h(\mu) \dd T^*\overline{Q}(\mu) \\
    &= \int_{\PP(\Sigma)} f(\mu) \int_\Sigma g_n(\iii) \dd\mu(\iii) \dd T^*\overline{Q}(\mu).
  \end{align*}
  This is what we wanted to show.
\end{proof}

%
%
%

If $\mu \in \PP(\Sigma)$ and $\iii \in \Sigma$, then we define
\begin{equation*}
  \la \mu,\iii \ra_n = \tfrac{1}{n} \sum_{k=0}^{n-1} \delta_{M^k(\mu,\iii)}
\end{equation*}
for all $n \in \N$. Any accumulation point of $\{ \la \mu,\iii \ra_n \}_{n \in \N}$ is called a \emph{micromeasure distribution} of $\mu$ at $\iii$. The family of micromeasure distributions of $\mu$ at $\iii$ is denoted by $\MD(\mu,\iii)$.
We say that $\mu \in \PP(\Sigma)$ \emph{generates} a distribution $Q$ on $\PP_\Sigma$ at $\iii$ if $\MD(\mu,\iii) = \{ Q \}$. If $\mu$ generates $Q$ at $\mu$-almost all $\iii$, then we say that $\mu$ \emph{generates} $Q$.

\begin{lemma} \label{thm:generated-adapted}
  If $Q \in \MD(\mu,\iii)$ is a micromeasure distribution on $\PP_\Sigma$, then $Q$ is adapted for $\mu$-almost all $\iii \in \Sigma$.
\end{lemma}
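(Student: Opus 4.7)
The strategy is to verify the criterion of Lemma \ref{thm:adapted2}. Let $\overline{Q}$ denote the pushforward of $Q$ under the continuous projection $(\nu, \iii) \mapsto \nu$ from $\PP_\Sigma$ onto $\PP(\Sigma)$; since $Q = \lim_j \la \mu, \iii \ra_{n_j}$ weakly along some sequence $n_j \to \infty$, we have $\overline{Q} = \lim_j \tfrac{1}{n_j} \sum_{k=0}^{n_j-1} \delta_{\mu_{\iii|_k}}$. For $f \in C(\PP(\Sigma))$ and $g \in C(\Sigma)$, the continuity of $\nu \mapsto \int g \, d\nu$ on $\PP(\Sigma)$ combined with weak convergence reduces the identity
\[
  \int_{\PP_\Sigma} f(\nu)g(\iii) \, dQ(\nu, \iii) = \int_{\PP(\Sigma)} f(\nu) \int_\Sigma g \, d\nu \, d\overline{Q}(\nu)
\]
to the $\mu$-almost sure statement
\[
  \lim_{n \to \infty} \frac{1}{n} \sum_{k=0}^{n-1} Y_k(\iii) = 0, \qquad Y_k(\iii) := f(\mu_{\iii|_k})\Bigl[g(\sigma^k \iii) - \int g \, d\mu_{\iii|_k}\Bigr].
\]
Since $C(\PP(\Sigma))$ and $C(\Sigma)$ are separable under the supremum norm, it suffices to prove the almost sure convergence for one fixed pair $(f, g)$ and then intersect full-measure sets over a countable dense family.

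The engine of the almost sure convergence is the conditional identity that for $\jjj \in \Sigma_k$ with $\mu([\jjj])>0$, the pushforward of $\mu|_{[\jjj]}/\mu([\jjj])$ under $\sigma^k$ equals $\mu_\jjj$; this yields $E_\mu[Y_k \mid \iii|_k] = 0$ with respect to the filtration $\FF_k := \sigma(\iii|_k)$. Were $g$ a cylinder function depending only on the first $m$ coordinates, $Y_k$ would be $\FF_{k+m}$-measurable, so the tower property would give $E_\mu[Y_k Y_{k'}] = 0$ whenever $k' \ge k + m$, and the uniform bound $|Y_k| \le 2\|f\|_\infty \|g\|_\infty$ would imply
\[
  E_\mu\Bigl[\Bigl(\tfrac{1}{n}\sum_{k=0}^{n-1} Y_k\Bigr)^2\Bigr] = O(m/n).
\]
A Borel--Cantelli argument along the subsequence $n = j^2$, combined with an interpolation in $[j^2,(j+1)^2]$ afforded by the uniform bound on $|Y_k|$, would then give $\tfrac{1}{n}\sum_{k=0}^{n-1} Y_k \to 0$ for $\mu$-a.e.\ $\iii$.

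The main obstacle is that a general $g \in C(\Sigma)$ is not a cylinder function, so $Y_k$ is not measurable with respect to any finite-stage $\FF_{k+m}$. I would circumvent this by uniform approximation: compactness of $\Sigma$ and uniform continuity of $g$ provide, for every $\eps > 0$, a cylinder function $g^{(m)}$ with $\|g - g^{(m)}\|_\infty < \eps$. Applying the orthogonality argument to the truncated variables $Y_k^{(m)}$ defined using $g^{(m)}$ in place of $g$ yields $\tfrac{1}{n}\sum_{k=0}^{n-1} Y_k^{(m)} \to 0$ $\mu$-almost surely. Since $|Y_k - Y_k^{(m)}| \le 2\|f\|_\infty \eps$, one then concludes $\limsup_n |\tfrac{1}{n}\sum_{k=0}^{n-1} Y_k(\iii)| \le 2\|f\|_\infty \eps$ $\mu$-almost surely, and letting $\eps \downarrow 0$ completes the proof.
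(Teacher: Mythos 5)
Your proof is correct and follows essentially the same route as the paper: reduce via Lemma \ref{thm:adapted2} to an almost sure Ces\`aro convergence statement, observe that the summands have vanishing conditional expectation with respect to the cylinder filtration, handle cylinder $g$ first and then extend by density in the uniform norm. The only difference is that where the paper directly invokes a strong law of large numbers for such sequences (citing Feller, Chapter~VII.9, Theorem~3), you unpack that citation into the explicit second-moment bound, Borel--Cantelli along $n=j^2$, and interpolation argument; this is a self-contained rendering of the same idea rather than a genuinely different approach.
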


\begin{proof}
  Let $\FF_m$ be the $\sigma$-algebra generated by the $m$-th level cylinders. Fix $f \in C(\PP(\Sigma))$, $g \in \{ h \in C(\Sigma) : h \text{ is $\FF_m$-measurable for some } m \}$, and write
  \begin{equation*}
    X_k(\iii) = f(\mu_{\iii|_k})\biggl( \int_\Sigma g(\hhh) \dd \mu_{\iii|_k}(\hhh) - g(\sigma^k(\iii)) \biggr)
  \end{equation*}
  for all $\iii \in \Sigma$ and $k \in \N$. Since
  \begin{align*}
    \mathbb{E}\bigl( f(\mu_{\iii|_k})g(\sigma^k(\iii)) \,\big|\, \FF_{k-1} \bigr) &= \frac{1}{\mu([\iii|_{k-1}])} \sum_{h \in \{ 1,\ldots,N \}} \sum_{\jjj \in \Sigma_m} f(\mu_{\iii|_{k-1}h}) g(\sigma^k(\iii)) \mu([\iii|_{k-1}h\jjj]) \\
    &= \frac{1}{\mu([\iii|_{k-1}])} \sum_{h \in \{ 1,\ldots,N \}} f(\mu_{\iii|_{k-1}h}) \int_\Sigma g(\jjj) \dd \mu_{\iii|_{k-1}h}(\jjj) \mu([\iii|_{k-1}h]) \\
    &= \mathbb{E}\biggl( f(\mu_{\iii|_k}) \int_\Sigma g(\iii) \dd \mu_{\iii|_k}(\jjj) \,\bigg|\, \FF_{k-1} \biggr)
  \end{align*}
  we notice that $\mathbb{E}(X_k \,|\, \FF_{k-1}) = 0$ for all $k \in \N$. It is easy to see that $\sup_{k \in \N} \|X_k\|_{L^2(\mu)} < \infty$. Therefore, by \cite[Chapter VII.9, Theorem 3]{Feller1971}, we have $\tfrac{1}{n}\sum_{k=0}^{n-1} X_k(\iii) = 0$ for $\mu$-almost all $\iii \in \Sigma$.

  Since $Q = \lim_{n \to \infty} \tfrac{1}{n} \sum_{k=0}^{n-1} \delta_{M^k(\mu,\iii)}$, where the convergence is along a subsequence, and $\iii$ is chosen according to $\mu$, we thus have
  \begin{equation} \label{eq:integraali-nolla}
    0 = \lim_{n \to \infty} \tfrac{1}{n}\sum_{k=0}^{n-1} X_k(\iii) = \int_{\PP_\Sigma} f(\nu) \biggl( \int_\Sigma g(\hhh) \dd \nu(\hhh) - g(\jjj) \biggr) \dd Q(\nu,\jjj).
  \end{equation}
  The function space $C(\Sigma)$ is separable in the uniform norm and, furthermore, each element in the dense set separating $C(\Sigma)$ can be chosen to be $\FF_m$-measurable for some $m$. Thus the equation \eqref{eq:integraali-nolla} holds for all $f \in C(\PP(\Sigma))$ and $g \in C(\Sigma)$. Let $\overline{Q} \in \PP(\Sigma)$ be the projection of $Q$ onto the measure component. Then
  \begin{align*}
    \int_{\PP(\Sigma)} f(\nu) \int_\Sigma g(\hhh) \dd \nu(\hhh) \dd \overline{Q}(\nu) &= \int_{\PP_\Sigma} f(\nu) \int_\Sigma g(\hhh) \dd \nu(\hhh) \dd Q(\nu,\jjj) \\
    &= \int_{\PP_\Sigma} f(\nu) g(\jjj) \dd Q(\nu,\jjj)
  \end{align*}
  and Lemma \ref{thm:adapted2} finishes the proof.
\end{proof}

Let $Q$ be a distribution on $\PP_\Sigma$. Recall that $A \subset \PP_\Sigma$ is \emph{invariant} (with respect to $Q$) if $Q(M^{-1}(A) \triangle A) = 0$. It follows from Lemmas \ref{thm:invariant_adapted} and \ref{thm:adapted_omega} that $\Omega$ is invariant with respect to any adapted distribution. Furthermore, recall that $Q$ is \emph{invariant} if $MQ=Q$ and \emph{ergodic} if $Q(A) \in \{ 0,1 \}$ for all invariant $A \subset \PP_\Sigma$. An invariant and adapted distribution $Q$ on $\PP_\Sigma$ is called a \emph{symbolic CP-distribution}.

\begin{lemma} \label{thm:generated-CP}
  If $Q \in \MD(\mu,\iii)$ is a micromeasure distribution on $\PP_\Sigma$, then $Q$ is a symbolic CP-distribution for $\mu$-almost all $\iii \in \Sigma$.
\end{lemma}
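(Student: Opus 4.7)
The plan is to combine the already-proved adaptedness from Lemma \ref{thm:generated-adapted} with a Krylov--Bogolyubov-type argument for invariance, so the only remaining task is to show $MQ = Q$ for $\mu$-almost every $\iii$ and every $Q \in \MD(\mu,\iii)$. For $\mu$-a.e.\ $\iii$ we have $\iii \in \spt(\mu)$, and then $M^k(\mu,\iii) = (\mu_{\iii|_k},\sigma^k(\iii))$ lies in $\Omega$ for every $k \ge 0$ because $\mu_{\iii|_k}([\sigma^k(\iii)|_n]) = \mu([\iii|_{k+n}])/\mu([\iii|_k]) > 0$. Consequently the empirical averages $\la\mu,\iii\ra_n$ are all supported on $\Omega$, and by Lemma \ref{thm:generated-adapted} combined with Lemma \ref{thm:adapted_omega} the limit $Q$ satisfies $Q(\Omega)=1$.

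The invariance argument rests on the telescoping identity
\begin{equation*}
  M\la\mu,\iii\ra_n - \la\mu,\iii\ra_n = \tfrac{1}{n}\bigl(\delta_{M^n(\mu,\iii)} - \delta_{(\mu,\iii)}\bigr),
\end{equation*}
which implies that for any bounded $F \colon \PP_\Sigma \to \R$,
\begin{equation*}
  \Bigl| \int F \dd \la\mu,\iii\ra_n - \int F \circ M \dd \la\mu,\iii\ra_n \Bigr| \le \frac{2\|F\|_\infty}{n}.
\end{equation*}
Along a subsequence realising $Q = \lim_n \la\mu,\iii\ra_n$, the first integral converges to $\int F \dd Q$ whenever $F$ is continuous. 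The main obstacle is that $M$ is continuous only on $\Omega$ and not on all of $\PP_\Sigma$, so weak convergence does not automatically give $\int F\circ M \dd\la\mu,\iii\ra_n \to \int F \circ M \dd Q$.

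This obstacle is bypassed using the Portmanteau theorem: for $F \in C(\PP_\Sigma)$ the discontinuity set of $F \circ M$ is contained in $\PP_\Sigma \setminus \Omega$, which is $Q$-null; since also each $\la\mu,\iii\ra_n$ lives on $\Omega$ where $F \circ M$ is continuous, we conclude $\int F \circ M \dd\la\mu,\iii\ra_n \to \int F \circ M \dd Q = \int F \dd MQ$. Combining this with the telescoping estimate gives $\int F \dd Q = \int F \dd MQ$ for all $F \in C(\PP_\Sigma)$, hence $MQ = Q$. Together with the adaptedness supplied by Lemma \ref{thm:generated-adapted}, this proves that $Q$ is a symbolic CP-distribution for $\mu$-almost every $\iii \in \Sigma$.
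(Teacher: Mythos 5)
Your proposal is correct and follows essentially the same route as the paper: obtain adaptedness from Lemma \ref{thm:generated-adapted}, deduce $Q(\Omega)=1$ from Lemma \ref{thm:adapted_omega}, and then prove invariance via the telescoping identity together with the continuity of $M$ on $\Omega$. Your explicit appeal to the Portmanteau theorem is a slightly more careful version of the step the paper handles by simply citing that ``$M$ is continuous on $\Omega$''; for completeness one should note that $M$ is in fact continuous on the \emph{open} set $\{(\nu,\jjj):\nu([\jjj|_1])>0\}\supset\Omega$ (since $\nu_k\to\nu$ weakly and $\jjj_k\to\jjj$ eventually force $\jjj_k|_1=\jjj|_1$ and $\nu_k([\jjj|_1])\to\nu([\jjj|_1])>0$), which is what guarantees that the discontinuity set of $F\circ M$ is genuinely disjoint from $\Omega$ rather than merely that $F\circ M|_\Omega$ is continuous.
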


\begin{proof}
  The adaptedness follows from Lemma \ref{thm:generated-adapted}. Therefore, it suffices to show that an adapted micromeasure distribution $Q$ is invariant. By Lemma \ref{thm:adapted_omega}, $Q(\Omega)=1$ where $\Omega$ is defined in \eqref{eq:omegagamma}. Since $Q = \lim_{n \to \infty}\la \mu,\iii \ra_n$, where the convergence is along a subsequence, and, by the remark made before Lemma \ref{thm:invariant_adapted}, $M$ is continuous on $\Omega$, we have
  \begin{align*}
    MQ-Q &= \lim_{n \to \infty} (M\la \mu,\iii \ra_n - \la \mu,\iii \ra_n) \\
    &= \lim_{n \to \infty} \tfrac{1}{n} \biggl( \sum_{k=0}^{n-1} M\delta_{M^k(\mu,\iii)} - \sum_{k=0}^{n-1} \delta_{M^k(\mu,\iii)} \biggr) \\
    &= \lim_{n \to \infty} \tfrac{1}{n} (\delta_{M^n(\mu,\iii)} - \delta_{(\mu,\iii)}) = 0,
  \end{align*}
  which is what we wanted to show.
\end{proof}

\begin{lemma} \label{thm:lemma18}
  If $Q$ is a symbolic CP-distribution on $\PP_\Sigma$, then its ergodic components are symbolic $CP$-distributions.
\end{lemma}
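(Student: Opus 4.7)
The plan is as follows. First, invoke the ergodic decomposition theorem quoted in the paper to write $Q = \int Q_\omega \, dQ(\omega)$, where $Q_\omega$ is $M$-invariant and ergodic for $Q$-almost every $\omega$. The $M$-invariance of the ergodic components is immediate from this decomposition, so the entire content of the lemma is in verifying that $Q_\omega$ is adapted for $Q$-almost every $\omega$. The natural candidate for $\overline{Q_\omega}$ in the adaptedness equation is the push-forward of $Q_\omega$ under the projection $\pi\colon\PP_\Sigma\to\PP(\Sigma)$, $(\mu,\iii)\mapsto\mu$.

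By Lemma \ref{thm:adapted2}, adaptedness of $Q_\omega$ is equivalent to the vanishing
\[
  \int_{\PP_\Sigma} f(\mu)\biggl( g(\iii)-\int_\Sigma g(\hhh)\dd\mu(\hhh)\biggr)\dd Q_\omega(\mu,\iii) = 0
\]
for all $f\in C(\PP(\Sigma))$ and $g\in C(\Sigma)$. Writing $h(\mu,\iii)=f(\mu)\bigl(g(\iii)-\int g\dd\mu\bigr)$, Birkhoff's ergodic theorem applied to the ergodic system $(\PP_\Sigma,Q_\omega,M)$ identifies this integral with the $Q_\omega$-almost sure limit of the Birkhoff averages $\tfrac1n\sum_{k=0}^{n-1} h(M^k(\mu,\iii))$. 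Hence it suffices to show these averages tend to $0$ for $Q$-almost every $(\mu,\iii)$: by Fubini applied to the ergodic decomposition, this forces $\int h\dd Q_\omega=0$ for $Q$-almost every $\omega$.

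The remaining claim is precisely the martingale computation carried out inside the proof of Lemma \ref{thm:generated-adapted}. Indeed, since $M^k(\mu,\iii)=(\mu_{\iii|_k},\sigma^k(\iii))$, the quantity $h(M^k(\mu,\iii))$ is, up to a sign, the variable $X_k(\iii)$ appearing there. Because $Q$ itself is adapted, the two-step sampling description in \eqref{eq:adapted} lets us, for $\overline{Q}$-almost every $\mu$, compute the conditional expectations of these variables against the $\sigma$-algebra generated by cylinders of length $k-1$ under the measure $\mu$, and the cancellation shown in Lemma \ref{thm:generated-adapted} makes $(X_k)$ a martingale difference sequence which is uniformly bounded in $L^2(\mu)$. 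The $L^2$ strong law of large numbers for martingale differences (\cite[Chapter VII.9, Theorem 3]{Feller1971}) then yields $\tfrac1n\sum_{k=0}^{n-1} X_k(\iii)\to 0$ for $\mu$-almost every $\iii$, and the adaptedness of $Q$ promotes this to $Q$-almost every $(\mu,\iii)$.

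The one subtlety is that the martingale argument is naturally carried out when $g$ is measurable with respect to a finite cylinder $\sigma$-algebra $\FF_m$, whereas Lemma \ref{thm:adapted2} requires $g\in C(\Sigma)$. This is the main point requiring care, but it is purely technical: the cylinder-measurable continuous functions are uniformly dense in $C(\Sigma)$, so a monotone or uniform approximation argument (exactly as in the final paragraph of the proof of Lemma \ref{thm:generated-adapted}, invoking the separability of $C(\Sigma)$) lets us pass from the dense subclass to all of $C(\Sigma)$ and conclude via Lemma \ref{thm:adapted2} that $Q_\omega$ is adapted, hence a symbolic CP-distribution.
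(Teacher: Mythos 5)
Your proof is correct and follows the paper's own strategy: reduce via Lemma~\ref{thm:adapted2} and the Birkhoff ergodic theorem to the almost-sure vanishing of the Birkhoff averages of $h(\mu,\iii)=f(\mu)\bigl(g(\iii)-\int g\,\mathrm{d}\mu\bigr)$, and then establish this via the martingale computation already appearing in the proof of Lemma~\ref{thm:generated-adapted}. If anything you are more careful than the paper here: its opening reduction (``it suffices to show that an ergodic and invariant distribution is adapted'') is not literally correct, since for instance $\delta_\mu\times\nu$ with $\mu$ a nontrivial Bernoulli measure and $\nu\ne\mu$ a shift-invariant ergodic measure is an ergodic $M$-invariant distribution supported on $\Omega$ that fails to be adapted, and your explicit use of the adaptedness of the ambient $Q$ to promote the $L^2$-martingale convergence from ``$\mu$-a.e.\ $\iii$, $\overline{Q}$-a.e.\ $\mu$'' to ``$Q$-a.e.\ $(\mu,\iii)$'' is precisely the step that closes this gap.
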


\begin{proof}
  By the ergodic decomposition, each ergodic component $Q_\omega$ is invariant. Thus it suffices to show that an ergodic and invariant distribution is adapted. If $Q$ is such a distribution on $\PP_\Sigma$ and $f$ is a continuous function defined on $\PP_\Sigma$, then, by the Birkhoff ergodic theorem, we have
  \begin{equation*}
    \lim_{n \to \infty} \int_{\PP_\Sigma} f(\nu,\jjj) \dd \la \mu,\iii \ra_n(\nu,\jjj) = \lim_{n \to \infty} \tfrac{1}{n} \sum_{k=0}^{n-1} f(M^k(\mu,\iii)) = \int_{\PP_\Sigma} f(\nu,\jjj) \dd Q(\nu,\jjj)
  \end{equation*}
  for $Q$-almost all $(\mu,\iii)$. Since this holds simultaneously for any countable collection of continuous functions and $C(\PP_\Sigma)$ is separable in the uniform norm, it holds simultaneously for all continuous $f$. Therefore $\overline{Q}$-almost every $\mu$ generates $Q$ where $\overline{Q} \in \PP(\Sigma)$ is the projection of $Q$ onto the measure component. By Lemma \ref{thm:generated-adapted}, $Q$ is adapted.
\end{proof}

\subsection{CP-distributions}
Let
\begin{align*}
  \fii_1 \colon \R^2 \to \R^2, \quad &\fii_1(x,y) = \tfrac12(x,y) + (\tfrac12,\tfrac12), \\
  \fii_2 \colon \R^2 \to \R^2, \quad &\fii_2(x,y) = \tfrac12(x,y) + (\tfrac12,-\tfrac12), \\
  \fii_3 \colon \R^2 \to \R^2, \quad &\fii_3(x,y) = \tfrac12(x,y) + (-\tfrac12,-\tfrac12), \\
  \fii_4 \colon \R^2 \to \R^2, \quad &\fii_4(x,y) = \tfrac12(x,y) + (-\tfrac12,\tfrac12).
\end{align*}
Observe that $[-1,1]^2 = \bigcup_{i=1}^4 \fii_i([-1,1]^2)$ and the interiors of the cubes $\fii_i([-1,1]^2)$ and $\fii_j([-1,1]^2)$ are disjoint whenever $i \ne j$. Write $\fii_\iii = \fii_{i_1} \circ \cdots \circ \fii_{i_n}$ for all $\iii = i_1 \cdots i_n \in \Sigma_n$ and $n \in \N$. Then, similarly, $[-1,1]^2 = \bigcup_{\iii \in \Sigma_n} \fii_\iii([-1,1]^2)$ and the interiors of the dyadic cubes $\fii_\iii([-1,1]^2)$ and $\fii_\jjj([-1,1]^2)$ of side length $2^{1-n}$ are disjoint for all $\iii,\jjj \in \Sigma_n$ with $\iii \ne \jjj$.

Each element in $\Sigma$ can now be made to represent a point in $[-1,1]^2$. The \emph{projection} $\pi \colon \Sigma \to [-1,1]^2$ is defined by setting $\pi(\iii)$ to be the single point in the set $\bigcap_{n \in \N} \fii_{\iii|_n}([-1,1]^2)$ for all $\iii \in \Sigma$. The projection is continuous and surjective. Furthermore, each point in $[-1,1]^2 \setminus \Delta$ is represented by a unique element in $\Sigma$, where $\Delta = \bigcup_{\iii \in \Sigma_*} \partial \fii_\iii([-1,1]^2)$ and $\partial A$ is the boundary of the set $A$.

If $Q$ is a distribution on $\PP_\Sigma$ and $\overline{Q} \in \PP(\Sigma)$ its projection onto the measure component, then the \emph{intensity measure} $[Q]$ of $Q$ is defined by
\begin{equation*}
  [Q](A) = \int_{\PP(\Sigma)} \pi\mu(A) \dd \overline{Q}(\mu)
\end{equation*}
for all $A \subset [-1,1]^2$. Write $\LL$ to denote the normalized Lebesgue measure on $[-1,1]^2$. We say that a distribution $Q$ has \emph{Lebesgue intensity} if $[Q]=\LL$. If a distribution $Q$ on $\PP_\Sigma$ has Lebesgue intensity, then $[Q](\Delta) = \LL(\Delta) = 0$ and we have $\pi\nu(\Delta) = 0$ for $\overline{Q}$-almost all $\nu$. 

\begin{lemma} \label{thm:trans-leb-intensity}
  Let $\mu' \in \PP([-\tfrac12,\tfrac12]^2)$, write $\mu_z' = T_z\mu'$ for all $z \in [-\tfrac12,\tfrac12]^2$, and let $\mu_z$ be a measure on $\Sigma$ such that $\pi\mu_z = \mu_z'$. Then, for $\mu_z$-almost every $\iii_z \in \Sigma$, a micromeasure distribution $Q_z \in \MD(\mu_z,\iii_z)$ has Lebesgue intensity for $\LL$-almost all $z \in [-\tfrac12,\tfrac12]^2$.
\end{lemma}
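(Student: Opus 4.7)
The plan is to establish that for every $g$ in a countable dense subset of $C([-1,1]^2)$, the Ces\`aro averages $\tfrac{1}{n}\sum_{k=0}^{n-1}\int g\dd\pi(\mu_z)_{\iii|_k}$ converge to $\int g\dd\LL$ for $\LL$-almost every $z$ and $\mu_z$-almost every $\iii$. Since any $Q_z\in\MD(\mu_z,\iii)$ is a weak accumulation point of $\la\mu_z,\iii\ra_n$ and $\nu\mapsto\int g\dd\pi\nu$ is continuous on $\PP(\Sigma)$, this forces $\int g\dd[Q_z]=\int g\dd\LL$ and hence $[Q_z]=\LL$.

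The first moment is computed by Fubini. Writing $\fii_\iii(y)=2^{-|\iii|}y+c_\iii$, the identity $\mu_z([\jjj])=\mu_z'(\fii_\jjj([-1,1]^2))$ (valid off the $\mu_z'$-null boundary $\Delta$) lets one telescope the normalization and conclude
\begin{equation*}
\int_\Sigma\int g\dd\pi(\mu_z)_{\iii|_k}\dd\mu_z(\iii)=\int h_k\dd\mu_z',
\end{equation*}
where $h_k(y)=g(2^k(y-c_k(y)))$ and $c_k(y)$ denotes the centre of the level-$k$ dyadic cube containing $y$. A cell-by-cell change of variables verifies $\int h_k\dd\LL=\int g\dd\LL$. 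Writing $\mu_z'=T_z\mu'$ and integrating in $z$, the inner average $\int_{[-\tfrac12,\tfrac12]^2}h_k(u-z)\dd\LL(z)$ integrates $h_k$ over the unit window $u+[-\tfrac12,\tfrac12]^2$, which by the dyadic quasi-periodicity of $h_k$ at scale $2^{1-k}$ equals $\LL([-\tfrac12,\tfrac12]^2)\int g\dd\LL+O(2^{-k}\|g\|_\infty)$, the error coming from partial dyadic cells at the boundary of the window. Consequently the mean of $\Phi_k(z,\iii):=\int g\dd\pi(\mu_z)_{\iii|_k}$ under the probability measure $\Lambda$ defined by $\int f\dd\Lambda=\LL([-\tfrac12,\tfrac12]^2)^{-1}\int\int f(z,\iii)\dd\mu_z(\iii)\dd\LL(z)$ converges to $\int g\dd\LL$ as $k\to\infty$.

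To upgrade mean convergence to $\Lambda$-almost-sure Ces\`aro convergence, one bounds the covariances $\mathrm{Cov}_\Lambda(\Phi_j,\Phi_k)$. A parallel Fubini computation for the product $\Phi_j\Phi_k$ splits the dyadic sums at the coarser scale and uses that, after the $z$-randomization, fine-scale fluctuations at level $\max(j,k)$ decorrelate from coarse-scale ones at level $\min(j,k)$ up to the same type of boundary errors, yielding $|\mathrm{Cov}_\Lambda(\Phi_j,\Phi_k)|\lesssim 2^{-|j-k|}\|g\|_\infty^2$. Summable covariances give $\mathrm{Var}_\Lambda(\tfrac{1}{n}\sum_{k=0}^{n-1}\Phi_k)=O(1/n)$, and Chebyshev plus Borel-Cantelli along the subsequence $n_\ell=\ell^2$, combined with the uniform bound $|\Phi_k|\le\|g\|_\infty$ used to interpolate between consecutive $n_\ell$, yields $\Lambda$-almost-sure convergence of the Ces\`aro averages to $\int g\dd\LL$. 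Fubini then extracts $\LL$-almost every $z$ and $\mu_z$-almost every $\iii$, and running $g$ over a countable dense family concludes $[Q_z]=\LL$. The main obstacle is the covariance estimate: it requires careful bookkeeping of the boundary dyadic cells straddling the unit window that may be disproportionately hit by $\mu'$, and it is precisely here that the randomization by $z$ plays its decisive decorrelating role.
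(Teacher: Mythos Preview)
Your route is genuinely different from the paper's and considerably more laborious. The paper's proof hinges on a single observation you are not using: by Lemma~\ref{thm:generated-adapted} the micromeasure distribution $Q_z$ is adapted, and adaptedness lets one compute the intensity via the \emph{point} component rather than the measure component. Concretely,
\[
  \int g\dd[Q_z]
  = \int_{\PP_\Sigma} g\circ\pi(\jjj)\dd Q_z(\nu,\jjj)
  = \lim_{n\to\infty}\frac{1}{n}\sum_{k=0}^{n-1} g\bigl(\pi(\sigma^k\iii_z)\bigr).
\]
Since $\pi(\iii_z)=\pi(\iii)-z$ and $z$ is Lebesgue-random, the point $\pi(\iii_z)$ is Lebesgue-generic for the doubling map $T$; Birkhoff's ergodic theorem for $(T,\LL)$ then gives the right-hand side equals $\int g\dd\LL$ with no further work. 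Your $\Phi_k(z,\iii)=\int g\dd\pi(\mu_z)_{\iii|_k}$ is precisely the conditional expectation of $g(\pi(\sigma^k\iii_z))$ given $\iii|_k$, so you are in effect re-proving a projected version of the same ergodic statement by hand via second moments.

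On correctness: your first-moment computation is fine, but the covariance bound $|\mathrm{Cov}_\Lambda(\Phi_j,\Phi_k)|\lesssim 2^{-|j-k|}\|g\|_\infty^2$ as stated is not right for arbitrary $g\in C([-1,1]^2)$. Even in the simplest case $\mu'=\delta_p$, where $\Phi_k(z)=g(T^k(p-z))$, this is the decay-of-correlations estimate for the doubling map, and the rate $2^{-|j-k|}$ requires H\"older or Lipschitz regularity of $g$, with the implied constant depending on that norm rather than on $\|g\|_\infty$. You can repair this by taking your countable dense family to consist of Lipschitz functions, but for general $\mu'$ the covariance computation still needs genuine work that you have only sketched (the ``decorrelation after $z$-randomization'' is exactly the mixing statement one would otherwise get for free from ergodicity). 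The paper's use of adaptedness collapses all of this to a one-line application of Birkhoff.
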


\begin{proof}
  Let $g \in C([-1,1]^2)$ and pick $\mu_0$-generic $\iii \in \Sigma$. This means that the point $\iii_z$ for which $\pi(\iii_z) = \pi(\iii) - z$ is $\mu_z$-generic for all $z \in [-\tfrac12,\tfrac12]$. Therefore, by Lemma \ref{thm:generated-adapted}, $Q_z = \lim_{n \to \infty} \la \mu_z,\iii_z \ra_n$ is adapted. Note that here the convergence is along a subsequence. Furthermore, if $z$ is chosen according to the Lebesgue measure, then $\iii_z$ is generic for the uniform distribution $\lambda$ on $\Sigma$. Hence, by Lemma \ref{thm:adapted2} and the Birkhoff ergodic theorem, we have
  \begin{align*}
    \int_{\PP(\Sigma)} \int_\Sigma g(x) \dd \pi\nu(x) \dd \overline{Q}_z(\nu)
    &= \int_{\PP_\Sigma} g \circ \pi(\jjj) \dd Q_z(\nu,\jjj) \\
    &= \lim_{n \to \infty} \int_{\PP_\Sigma} g \circ \pi(\jjj) \dd \la \mu_z,\iii_z \ra_n(\nu,\jjj) \\
    &= \lim_{n \to \infty} \tfrac{1}{n} \sum_{k=0}^{n-1} g \circ \pi(\sigma^k(\iii_z)) \\
    &= \int_\Sigma g \circ \pi(\jjj) \dd \lambda(\jjj)
    = \int_{[-1,1]^2} g(x) \dd \LL(x).
  \end{align*}
  It follows that $[Q_z] = \LL$ for $\LL$-almost all $z \in [-\tfrac12,\tfrac12]^2$.
\end{proof}

Observe that in Lemma \ref{thm:trans-leb-intensity}, instead of translating the measure $\mu$, one can slightly adjust the dyadic grid to see that $\pi\nu(\Delta)=0$ for $\overline{Q}$-almost all $\nu$, where $Q$ is a micromeasure distribution of $\mu$. Therefore, up to choosing a suitable dyadic grid, the previous lemmas guarantee that all the symbolic results hold analogously in $[-1,1]^2$. More precisely, let $\II_n = \{ [\tfrac{i}{2^n},\tfrac{i+2}{2^n}) : i \in \N \text{ is even} \}$ and define $\DD_n = \{ I_1 \times I_2 : I_1,I_2 \in \II_n \}$ for all $n \in \N$. Note that for each $\iii \in \Sigma_n$ there exists $D \in \DD_n$ such that $\fii_\iii([-1,1]^2) = \overline{D}$. Let $D_n(x) \in \DD_n$ be the only element containing $x \in \R^2$. If $x \in [-1,1]^2 \setminus \Delta$, then there exists unique $\iii \in \Sigma$ such that $\pi(\iii) = x$ and $\overline{D_n(x)} = \fii_{\iii|_n}([-1,1]^2)$ for all $n \in \N$. Let $B$ be a closed ball and write $T_B \colon \R^2 \to \R^2$ for the orientation preserving homothety map taking $B$ onto $[-1,1]^2$. Define $M \colon \MM \times [-1,1]^2 \to \MM \times [-1,1]^2$ by setting
\begin{equation*}
  M(\mu,x) = (T_{D_1(x)}\mu, T_{D_1(x)}x).
\end{equation*}
Now $M^\square$ is a mapping from $\MM^\square \times [-1,1]^2$ to $\MM^\square \times [-1,1]^2$ such that
\begin{equation*}
  M^\square(\mu,x) = (T^\square_{D_1(x)}\mu, T_{D_1(x)}x) = \biggl( \frac{T_{D_1(x)}\mu}{\mu(D_1(x))}, T_{D_1(x)}(x) \biggr).
\end{equation*}
This is well-defined whenever $\mu(D_1(x))>0$. Observe that the map $T \colon [-1,1]^2 \to [-1,1]^2$ defined by $T(x) = T_{D_1(x)}(x)$ for all $x \in [-1,1]^2$ satisfies $T \circ \pi(\iii) = \pi \circ \sigma(\iii)$ for all $\iii \in \Sigma \setminus \pi^{-1}(\Delta)$. We adopt all the definitions from the symbolic case to be used analogously with $M^\square$ in $[-1,1]^2$. In particular, we say that a distribution $Q$ on $\PP$ is a \emph{CP-distribution} if it is invariant and adapted. Via this correspondence, Lemma \ref{thm:generated-CP} thus is Theorem \ref{thm:theorem11} in the CP-distribution setting.

The final lemma in this section guarantees that also the Lebesgue intensity is preserved when going to ergodic components.

\begin{lemma}
  If a distribution $Q$ on $\PP_\Sigma$ has Lebesgue intensity, then $Q$-almost every ergodic component of $Q$ has Lebesgue intensity.
\end{lemma}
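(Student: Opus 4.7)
The plan is to combine ergodic decomposition with uniqueness of the ergodic decomposition on $[-1,1]^2$. First I would write $Q=\int Q_\omega\dd Q(\omega)$ via the ergodic decomposition; by Lemma \ref{thm:lemma18}, each $Q_\omega$ is a symbolic CP-distribution. Linearity of the intensity gives
\begin{equation*}
\LL=[Q]=\int[Q_\omega]\dd Q(\omega),
\end{equation*}
and in particular $[Q_\omega](\Delta)=0$ for $Q$-a.e.\ $\omega$, since $\LL(\Delta)=0$.

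Next, I would show that for $Q$-a.e.\ $\omega$ the measure $[Q_\omega]$ is a $T$-invariant Borel probability on $[-1,1]^2$, where $T$ is the dyadic map introduced just before the statement. Since $\pi\circ\sigma=T\circ\pi$ on $\Sigma\setminus\pi^{-1}(\Delta)$ and the mean $\overline{M}_\omega:=\int\mu\dd\overline{Q_\omega}(\mu)$ satisfies $\pi\overline{M}_\omega=[Q_\omega]$, it suffices to show $\sigma\overline{M}_\omega=\overline{M}_\omega$. For any Borel $B\subset\Sigma$, the identity $\mu(\sigma^{-1}B)=\sum_i\mu([i])\mu_i(B)$ together with Lemma \ref{thm:adapted_def} and $MQ_\omega=Q_\omega$ gives
\begin{equation*}
\overline{M}_\omega(\sigma^{-1}B)=\int\mu_{\iii|_1}(B)\dd Q_\omega(\mu,\iii)=\int\nu(B)\dd Q_\omega(\nu,\jjj)=\overline{M}_\omega(B),
\end{equation*}
and pushing forward by $\pi$ (using $[Q_\omega](\Delta)=0$) yields $T[Q_\omega]=[Q_\omega]$.

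The heart of the argument is showing that $[Q_\omega]$ is $T$-ergodic for $Q$-a.e.\ $\omega$. Since $Q_\omega$ is $M$-ergodic, Birkhoff's theorem applied to the continuous function $(\nu,\jjj)\mapsto g(\pi\jjj)$ yields, for $Q_\omega$-a.e.\ $(\mu,\iii)$,
\begin{equation*}
\lim_{n\to\infty}\tfrac{1}{n}\sum_{k=0}^{n-1}g(T^k\pi\iii)=\int g\dd[Q_\omega]
\end{equation*}
simultaneously for every $g$ in a countable dense subfamily of $C([-1,1]^2)$. Projecting the good set to the $\iii$-coordinate (whose $Q_\omega$-law is $\overline{M}_\omega$, so its $\pi$-image has law $[Q_\omega]$) shows that $[Q_\omega]$-a.e.\ point is $T$-generic for $[Q_\omega]$, which is the standard characterization of $T$-ergodicity.

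Finally, $\LL$ is itself $T$-ergodic because via $\pi$ it corresponds to the uniform Bernoulli measure on $\Sigma$; hence the ergodic decomposition of $\LL$ is $\delta_\LL$. But $\LL=\int[Q_\omega]\dd Q(\omega)$ already expresses $\LL$ as a barycenter of $T$-invariant, $T$-ergodic probability measures, so uniqueness of the ergodic decomposition forces the pushforward of $Q$ under $\omega\mapsto[Q_\omega]$ to equal $\delta_\LL$; equivalently $[Q_\omega]=\LL$ for $Q$-a.e.\ $\omega$. I expect the main obstacle to be the third step, the transfer from $M$-ergodicity of $Q_\omega$ on $\PP_\Sigma$ to $T$-ergodicity of $[Q_\omega]$ on $[-1,1]^2$, where one must verify that the dyadic boundary $\Delta$ is negligible along entire forward orbits so that the identity $\pi\circ\sigma=T\circ\pi$ can be used iteratively.
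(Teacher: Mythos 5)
Your proposal is correct but follows a genuinely different route from the paper. The paper's argument is an entropy argument: from $[Q](\Delta)=0$ it gets $[Q_\omega](\Delta)=0$ for $Q$-a.e.\ $\omega$, observes that for the dyadic map $T$ the normalized Lebesgue measure is the unique measure of maximal entropy $\log 4$, and concludes from the affinity of entropy,
\begin{equation*}
  \log 4 = h_{\LL}(T) = h_{[Q]}(T) = \int h_{[Q_\omega]}(T)\dd Q(\omega) \le \log 4,
\end{equation*}
that $h_{[Q_\omega]}(T) = \log 4$ (hence $[Q_\omega]=\LL$) almost surely. This bypasses any discussion of ergodicity of the individual intensities: affinity of entropy only requires the $[Q_\omega]$ to be $T$-invariant. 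Your route instead proves that each $[Q_\omega]$ is both $T$-invariant and $T$-ergodic (the latter via Birkhoff generic points pushed through $\pi$), and then invokes the uniqueness of the ergodic decomposition for the single ergodic measure $\LL$. Both are valid; yours is more elementary (no entropy, no variational principle) at the cost of establishing $T$-ergodicity of the $[Q_\omega]$, which is the technical heart of your argument and is precisely the step the entropy proof avoids. Your care with the boundary set $\Delta$ is warranted and resolves correctly: since $\LL(\Delta)=0$ forces $[Q_\omega](\Delta)=0$, and $\Sigma\setminus\pi^{-1}(\Delta)$ is forward $\sigma$-invariant, the intertwining $T\circ\pi=\pi\circ\sigma$ can indeed be iterated on a set of full $\overline{M}_\omega$-measure, so both the invariance and the genericity transfers go through. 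Your proof also makes explicit the $T$-invariance of $[Q_\omega]$, which the paper's entropy argument tacitly assumes.
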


\begin{proof}
  Observe first that, by the ergodic decomposition, $[Q](\Delta)=0$ implies $[Q_\omega](\Delta) = 0$ for $Q$-almost all $\omega$. Therefore, for such $\omega$, the measure-theoretic entropy $h_{[Q_\omega]}(T)$ satisfies $h_{[Q_\omega]}(T) \le \log 4$, with equality if and only if $[Q_\omega] = \LL$. By the affinity of the entropy,
  \begin{equation*}
    h_{[Q]}(T) = \int_{\PP_\Sigma} h_{[Q_\omega]}(T) \dd Q(\omega)
  \end{equation*}
  which finishes the proof.
\end{proof}

\subsection{Factors, extensions, and suspensions}
Given two measure preserving systems $(X,Q,M)$ and $(X',Q',M')$, a map $\pi \colon X \to X'$ is called a \emph{factor map} if the underlying semigroups coincide, $\pi Q = Q'$, and $\pi$ intertwines the actions of the semigroups, i.e.\ $\pi \circ M_s = M_s' \circ \pi$ for all $s$. In this case, we also say that $(X',Q',M')$ is a factor of $(X,Q,M)$. The factor of an ergodic system is ergodic.

Let $(X,P)$ be a metric probability space, and consider the product spaces $(X^\N,P^\N)$ and $(X^\Z,P^\Z)$. The shift map $\sigma$ defined by $\sigma((x_i)_i) = (x_{i+1})_i$ acts on both spaces. If $(X^\N,Q,M)$ is a measure preserving system, then there exists a measure preserving system $(X^\Z,\hat Q,\hat M)$ such that $(X^\N,Q,M)$ is a factor of $(X^\Z,\hat Q,\hat M)$ under the natural projection map. This is called the \emph{two-sided extension} of the one-sided system $(X^\N,Q,M)$. The two-sided extension is ergodic if and only if the one-sided system is ergodic. Furthermore, any discrete measure preserving system $(X,Q,M)$ can always be represented as a shift space via the identification $x \mapsto (M^kx)_{i=0}^\infty$. The measure on $X^\N$ is the push-forward of $\mu$ under this map.

A standard way to build measure preserving flows from discrete systems is via \emph{suspensions}. Let $(X,Q,M)$ be a discrete measure preserving system and let $r>0$. Write $X' = X \times [0,r)$, $P = Q \times \lambda$, where $\lambda$ is the normalized Lebesgue measure on $[0,r)$, and set $T_t(x,s) = (x,s+t)$ if $s+t<r$ and $T_t(x,r-t) = (M(x),0)$. By iterating, this defines a flow, a so called \emph{suspension flow}, which indeed preserves the measure $P$. Moreover, the measure preserving system $(X,Q,M)$ is ergodic if and only if its suspension flow $(X',P,T)$ is ergodic.

\subsection{Proof of Theorem \ref{thm:theorem11}} \label{sec:MH-proof}
Let us next turn Lemma \ref{thm:generated-CP} into the language of fractal distributions and prove Theorem \ref{thm:theorem11}. We shall start by constructing an extended version of a CP-distribution. Let $Q$ is a CP-distribution. Its extended version $\hat Q$ is an $M^*$-invariant distribution on $\MM^* \times [-1,1]^2$ whose push-forward under $(\mu,x) \mapsto (\mu^\square,x)$ is $Q$. Let $(\mu_n,x_n)_{n \in \Z}$ be a stationary process via marginal $Q$ such that
\begin{equation*}
  (\mu_{n+1},x_{n+1}) = M^\square(\mu_n,x_n).
\end{equation*}
Let
\begin{equation*}
  \nu_n = T^*_{D_n(x_{-n})}\mu_{-n} \quad \text{and} \quad
  E_n = T_{D_n(x_{-n})}([-1,1]^2)
\end{equation*}
for all $n \in \N \cup \{ 0 \}$. Now $\nu_n$ is supported on $E_n$ and $[-1,1]^2 = E_0 \subset E_1 \subset E_2 \subset \cdots$. For $n \ge m \ge 0$ we have
\begin{equation*}
  (M^\square)^m(\mu_{-n},x_{-n}) = (\mu_{-n+m},x_{-n+m}).
\end{equation*}
This means that $\nu_n|_{E_m} = \nu_m$ and we may define a Radon measure $\nu$ by setting
\begin{equation*}
  \nu(A) = \lim_{n \to \infty} \nu_n(A)
\end{equation*}
for all Borel sets $A \subset \R^2$. Let the distribution of $\nu$ be $\overline{\hat Q}$, and let $\hat Q$ be such that $\dd\hat Q(\nu,x) = \dd\nu(x)\dd\overline{\hat Q}(\nu)$. The distribution $\hat Q$ on $\MM^* \times [-1,1]^2$ is $M^*$-invariant. It is easy to see that $(\mu,x) \mapsto (\mu^\square,x)$ is a factor map between the measure preserving systems $(\MM^* \times [-1,1]^2,\hat Q,M^*)$ and $(\MM^\square \times [-1,1]^2,Q,M^\square)$. The distribution $\hat Q$ is called the \emph{extended version} of $Q$.

If $\mu \in \MM$ and $x \in \spt(\mu)$, then we write
\begin{equation*}
  \cent_0(\mu,x) = T^*_x\mu.
\end{equation*}
The \emph{centering} of an extended CP-distribution $\hat Q$ is the distribution
\begin{equation*}
  \cent\hat Q = \frac{1}{\log 2} \int_0^{\log 2} S^*_t \cent_0\hat Q \dd t,
\end{equation*}
where
\begin{equation*}
  \cent_0 \hat Q = \int \la \mu \ra^*_{[-1,1]^2} \dd\overline{\hat Q}(\mu).
\end{equation*}
In other words, the centering of $\hat Q$ is a push-forward of the suspension flow of $\hat Q$ under the magnification of $\mu$ at $x$. Indeed, it is immediate that $(\mu',x') = M^*(\mu,x)$ satisfies
\begin{equation*}
  S^*_{\log 2}(T_x\mu) = T^*_{x'}\mu'.
\end{equation*}
Therefore, the map $\cent_0(\mu,x) = T^*_x\mu$ is a factor map from the discrete measure preserving system $(\MM^* \times [-1,1]^2,\hat Q,M^*)$ to the discrete system $(\MM^*,\cent_0\hat Q,S^*_{\log 2})$. The flow $(\MM^*,\cent\hat Q,S^*)$ is thus a factor of the suspension of $(\MM^* \times [-1,1]^2,\hat Q,M^*)$.

\begin{lemma} \label{thm:cent-cont}
  The mapping $Q \mapsto (\cent\hat Q)^\square$ defined on the set of all CP-distributions is continuous.
\end{lemma}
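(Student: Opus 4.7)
The plan is to fix a bounded continuous $g\colon\MM^\square\to\R$ and to show that the functional
$$\Phi(Q) := \int g\,d(\cent\hat Q)^\square = \frac{1}{\log 2}\int_0^{\log 2}\int g\bigl(S^\square_t T^\square_x\mu\bigr)\,d\hat Q(\mu,x)\,dt$$
depends weakly-continuously on the CP-distribution $Q$. The first observation I would make is that the integrand depends on $\mu$ only through the restriction $\mu|_{[-2,2]^2}$: indeed, $T^\square_x\mu$ normalizes the mass of $\mu$ on $[-1,1]^2+x\subset[-2,2]^2$ for $x\in[-1,1]^2$, and the scaling $S^\square_t$ for $t\in[0,\log 2]$ only shrinks the window further. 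Thus $\Phi(Q)$ depends on $\hat Q$ only through the joint law of the pair $(\mu|_{[-2,2]^2},x)$.

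Next, I would exploit the $M^*$-invariance and adaptedness of $\hat Q$ to represent this joint law via a finite segment of a stationary Markov chain $\{(\mu_j^\square,x_j)\}_{j\in\Z}$ whose marginal is $Q$ and whose forward dynamics is the deterministic map $M$. Setting $(\mu_0,x_0)=(\mu,x)$, the preimages $(\mu_{-j},x_{-j})$ satisfy $\mu=T^*_{D_j(x_{-j})}\mu_{-j}$ on $E_j=T_{D_j(x_{-j})}([-1,1]^2)$, and for $j$ large enough $E_j$ almost surely contains $[-2,2]^2$, so that $\mu|_{[-2,2]^2}$ is a measurable function of the past window $\{(\mu_{-i}^\square,x_{-i})\}_{i=0}^j$. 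The joint law of this window is fully determined by $Q$ through its marginal together with the backward transition kernels induced by $M$-invariance and adaptedness; these kernels vary weakly-continuously in $Q$, so $Q_n\to Q$ weakly implies weak convergence of the joint law of $(\mu|_{[-2,2]^2},x)$ under $\hat Q_n$ to that under $\hat Q$.

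Finally, I would appeal to the portmanteau theorem. The bounded functional $(\nu,x,t)\mapsto g(S^\square_t T^\square_x\nu)$ is continuous except at those configurations for which $\nu$ charges the boundary of either $[-1,1]^2+x$ or $x+e^{-t}[-1,1]^2$. The Fubini averaging over $t\in[0,\log 2]$ eliminates the at most countably many bad scales associated to each $(\nu,x)$, while the adapted nature of $\hat Q$—where $x$ is drawn from $\mu$ itself—together with a further Fubini argument handles the translation-boundary exceptions. Since the set of discontinuities is therefore null under the limiting joint distribution, $\Phi(Q_n)\to\Phi(Q)$, which is what we want. The main obstacle is precisely this last verification: because $x$ is drawn from $\mu$ rather than from Lebesgue measure, the standard Lebesgue-a.e.-continuity argument does not apply directly, and the averaging in $t$ must be combined with the adapted structure of $\hat Q$ to control the atomic configurations where the normalizations in $T^\square_x$ and $S^\square_t$ fail to be continuous.
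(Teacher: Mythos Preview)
Your strategy is essentially the paper's: localize to $\mu|_{[-2,2]^2}$, argue that the relevant joint law is a continuous function of $Q$, and use the $t$-average to wash out the countably many discontinuities of $S^\square_t$. Two remarks, though, would tighten your argument considerably and bring it in line with how the paper actually proceeds.

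First, your concern about the ``translation-boundary exceptions'' in $T^\square_x$ (or $T^*_x$) is unnecessary. A direct computation shows that for any $t>0$,
\[
S^\square_t T^*_x\mu(A)=\frac{\mu(e^{-t}A+x)}{\mu([-e^{-t},e^{-t}]^2+x)},
\]
so the normalizing factor $\mu([-1,1]^2+x)$ cancels entirely. The only discontinuity of $(\mu,x,t)\mapsto S^\square_t T^*_x\mu$ occurs when $\mu$ charges $\partial([-e^{-t},e^{-t}]^2+x)$, and for fixed $(\mu,x)$ this happens for at most countably many $t$. Dominated convergence then gives continuity of the inner integral $\int_0^{\log 2}g(S^\square_t T^*_x\mu)\,dt$ in $(\mu,x)$ with no further work; the adaptedness of $\hat Q$ is not needed at this step.

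Second, your route through ``backward transition kernels varying weakly-continuously in $Q$'' is more delicate than necessary, since conditioning is generally not continuous. The cleaner observation is that, by stationarity, the joint law of $(\mu_{-j},x_{-j})_{j=0}^{J}$ under the two-sided process equals the law of $(\mu_j,x_j)_{j=0}^{J}$, which is just the push-forward of $Q$ under the map $(\mu,x)\mapsto\bigl((\mu,x),M(\mu,x),\ldots,M^{J}(\mu,x)\bigr)$. Since $M$ is continuous on $\Omega$ and $Q(\Omega)=1$ for every CP-distribution (Lemma~\ref{thm:adapted_omega}), this push-forward depends continuously on $Q$. The paper skips this entirely, saying only that one can ``work with the push-forward of $Q$ under the normalization to $[-2,2]^2$'' and then ``naively think'' one is working with CP-distributions; your Markov-chain description is a correct way to make that sentence precise, but it should be framed via forward iterates rather than backward kernels.
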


\begin{proof}
  Since $\cent_0(\mu,x)$ depends only on the restriction of $\mu$ to $[-2,2]^2$, we can work with push-forward of $Q$ under the normalization to $[-2,2]^2$ instead of full extended version of $Q$. We shall, however, omit the details concerning this and naively think that, at every step, we are working with CP-distributions.
  
  Let $f \in C(\MM^\square)$. By the definition of the centering operation and Fubini's theorem, we have
  \begin{equation*}
    \int_{\MM^\square} f(\mu) \dd (\cent\hat Q)^\square(\mu) = \frac{1}{\log 2} \int_{\MM^\square \times [-1,1]^2} \int_0^{\log 2} f(S_t^\square T^*_x\mu) \dd t \dd Q(\mu,x).
  \end{equation*}
  The maps $S_t^\square$ are discontinuous at measures $\nu$ with $\nu(\partial [-e^{-t},e^{-t}]^2) > 0$. Note that a given measure $\nu$ can charge the boundary of a ball centered at the origin only at most countably many radii. It follows from the dominated convergence theorem that the inner integral is a continuous function of $T^*_x\mu$, which in turn is a continuous function of $(\mu,x)$. The lemma follows.
\end{proof}

\begin{theorem} \label{thm:cent-FD}
  If $Q \in \MD(\mu,x)$ is a CP-distribution on $\MM^\square$, then $\cent\hat Q$ is a fractal distribution on $\MM^*$.
\end{theorem}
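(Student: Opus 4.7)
The plan is to verify the two defining properties of a fractal distribution for $\cent\hat Q$, namely $S^*$-invariance and the quasi-Palm property. Both will rely on the suspension-flow construction of $\cent\hat Q$ over the discrete system $(\MM^*\times[-1,1]^2,\hat Q,M^*)$, combined with the adaptedness of $\hat Q$ inherited from the CP-distribution $Q$, in the form $\dd\hat Q(\mu,x)=\dd\mu(x)\dd\overline{\hat Q}(\mu)$.

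For $S^*$-invariance, the crucial identity stated just before the theorem is $S^*_{\log 2}\circ\cent_0=\cent_0\circ M^*$. Pushing the $M^*$-invariance of $\hat Q$ through this identity yields $S^*_{\log 2}\cent_0\hat Q=\cent_0\hat Q$. A standard interval-averaging argument then shows that $S^*_s\cent\hat Q=\cent\hat Q$ for every $s\in[0,\log 2]$: one splits $\int_s^{s+\log 2}S^*_u\cent_0\hat Q\dd u$ at $u=\log 2$ and uses $S^*_{\log 2}\cent_0\hat Q=\cent_0\hat Q$ on the piece beyond $\log 2$ to recover $\int_0^{\log 2}S^*_u\cent_0\hat Q\dd u$. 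The semigroup law then extends invariance to all $s\ge 0$.

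For the quasi-Palm property, fix a bounded open neighborhood $U$ of the origin and a Borel set $\AA\subset\MM^*$. Unfolding $\la\nu\ra_U^*(\AA)=\nu(\{z\in U:T^*_z\nu\in\AA\})$ for $\nu=S^*_tT^*_x\mu$ and performing the substitution $y=x+e^{-t}z$---using the commutation $T_zS_t=S_tT_{e^{-t}z}$, so that $T^*_zS^*_tT^*_x\mu=S^*_tT^*_y\mu$---one rewrites $\int_{\MM^*}\la\nu\ra_U^*(\AA)\dd\cent\hat Q(\nu)$ as
\begin{equation*}
  \frac{1}{\log 2}\int_0^{\log 2}\int\int\chi_{x+e^{-t}U}(y)\chi_\AA(S^*_tT^*_y\mu)\,c_{t,\mu,x}\dd\mu(y)\dd\mu(x)\dd\overline{\hat Q}(\mu)\dd t,
\end{equation*}
after invoking the adaptedness identity; here $c_{t,\mu,x}>0$ absorbs the normalization factors of the $*$-operations. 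Fubini now swaps $x$ and $y$ (using $y\in x+e^{-t}U\iff x\in y-e^{-t}U$), and integrating out $x$ multiplies the remaining integrand by a positive weight comparable to $\mu(y-e^{-t}U)$. Comparing with the direct adapted expansion
\begin{equation*}
  \cent\hat Q(\AA)=\frac{1}{\log 2}\int_0^{\log 2}\int\int\chi_\AA(S^*_tT^*_y\mu)\dd\mu(y)\dd\overline{\hat Q}(\mu)\dd t,
\end{equation*}
one sees that the two integrands have the same zero set: $\mu$-almost every $y$ lies in $\spt(\mu)$, and then $y-e^{-t}U$ is an open neighborhood of $y$, so $\mu(y-e^{-t}U)>0$. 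This is precisely the quasi-Palm equivalence $\cent\hat Q\sim\int\la\nu\ra_U^*\dd\cent\hat Q(\nu)$.

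The main technical obstacle will be the bookkeeping of the normalization factors hidden in $S^*_t$ and $T^*_y$: each application rescales mass by a factor depending on $\mu$, $x$, and $t$, and one must check that these constants are positive on a set of full measure and can be absorbed into the weight $c_{t,\mu,x}$ without affecting null sets, while making the Fubini swap rigorous on the relevant full-measure subset of $\MM^*\times[-1,1]^2$. A simpler secondary point is to confirm that adaptedness genuinely transfers from $Q$ on $\MM^\square\times[-1,1]^2$ to $\hat Q$ on $\MM^*\times[-1,1]^2$ via the extension construction; this is transparent from the stationary-process description of $\hat Q$ given before the theorem.
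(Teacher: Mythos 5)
Your plan correctly identifies the two properties to check, and your treatment of $S^*$-invariance via the identity $S^*_{\log 2}\circ\cent_0=\cent_0\circ M^*$ and the interval-averaging trick is sound; this matches what the paper takes for granted from the suspension-flow description preceding the theorem.

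The quasi-Palm argument, however, has a genuine gap in one of the two directions. Since the $x$-variable in the adapted identity $\dd\hat Q(\mu,x)=\dd\mu(x)\dd\overline{\hat Q}(\mu)$ ranges over $[-1,1]^2$ (not over $\R^2$), the weight produced by your Fubini swap is $\mu\bigl((y-e^{-t}U)\cap[-1,1]^2\bigr)$, not $\mu(y-e^{-t}U)$, and the $y$-integral in the diffused expression extends over $\bigcup_{x\in[-1,1]^2}(x+e^{-t}U)$, which strictly exceeds $[-1,1]^2$. By contrast, the direct formula $\cent\hat Q(\AA)=\tfrac{1}{\log 2}\int_0^{\log 2}\int\int_{[-1,1]^2}\chi_\AA(S^*_tT^*_y\mu)\dd\mu(y)\dd\overline{\hat Q}(\mu)\dd t$ integrates $y$ only over $[-1,1]^2$. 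Consequently, your comparison of integrands shows the \emph{easy} direction (a null set for the diffusion is null for $\cent\hat Q$), because for $\mu$-a.e.\ $y\in[-1,1]^2$ the weight is strictly positive. It does not show the \emph{hard} direction: knowing $\chi_\AA(S^*_tT^*_y\mu)=0$ for $\mu$-a.e.\ $y\in[-1,1]^2$ gives no control on $y$ just outside $[-1,1]^2$, where the weight can still be positive. This is not a technicality to be absorbed into the $c_{t,\mu,x}$'s: it is exactly the content of the theorem, and one cannot get it from adaptedness and Fubini alone. The paper supplies the missing ingredient through the two-sided stationary process: $\nu$ is represented as $\lim_n T^*_{D_n(x_{-n})}\mu_{-n}$, and stationarity of $(\mu_n,x_n)_{n\in\Z}$ together with the $S^*_{\log 2}$-invariance of $\cent_0\hat Q$ lets one rewrite $\int\chi_\UU(T^*_x\nu)\dd T^*_{D_n(x_{-n})}\mu_{-n}(x)$ as $\int\chi_{S^*_{-n\log 2}\UU}(T_x\nu)\dd\mu_0(x)$, whose expectation is $R(S^*_{-n\log 2}\UU)=R(\UU)=0$. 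This ``propagation across dyadic scales'' is what controls the integrand outside the base square, and it must be incorporated for the proof to close.
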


\begin{proof}
  Let us first show that $R = \cent_0\hat Q$ is quasi-Palm. Let $(\mu_n,x_n)_{n \in \Z}$ be the two-sided stationary process with marginal $Q$ and $(\mu_{n+1},x_{n+1}) = M^\square(\mu_n,x_n)$. If
  \begin{equation} \label{eq:nu-map}
    \nu = \lim_{n \to \infty} T^*_{D_n(x_{-n})}\mu_{-n},
  \end{equation}
  then we know that $\nu \sim \hat Q$. Let $B \subset \R^2$ be a bounded neighborhood of $0$ and define
  \begin{equation*}
    R' = \int \la \nu \ra^*_B \dd R(\nu) = \int \biggl( \int_B \delta_{T^*_x\nu} \biggr) \dd \nu(x) \dd R(\nu).
  \end{equation*}
  We must show that $R \sim R'$.
  
  Fix a measurable $\UU \subset \MM^*$ and assume that $R(\UU)=0$. We want to show that $R'(\UU)=0$. Observe that it suffices to prove
  \begin{equation*}
    \iint \chi_{\UU}(T^*_x\nu) \dd \nu(x) \dd R(\nu) = 0.
  \end{equation*}
  For this it is enough to show that for almost every realization $(\mu_n,x_n)_{n \in \Z}$ of the process and $\nu$ satisfying \eqref{eq:nu-map}, we have
  \begin{equation*}
    \int \chi_{\UU}(T^*_x\nu) \dd \nu(x) = 0.
  \end{equation*}
  By \eqref{eq:nu-map}, this will follow if for almost every realization of the process and every $n \in \N$, we have
  \begin{equation*}
    \int \chi_{\UU}(T^*_x\nu) \dd T^*_{D_n(x_{-n})} \mu_{-n}(x) = 0.
  \end{equation*}
  To show this, fix $n \in \N$. By stationarity of the process $(\mu_n,x_n)_{n \in \Z}$ and the fact that the map $(\mu_n,x_n)_{n \in \Z} \mapsto \nu$ intertwines the shift operation and $S^*_{\log 2}$, we have
  \begin{equation*}
    \int \chi_{\UU}(T^*_x\nu) \dd T^*_{D_n(x_{-n})}\mu_{-n}(x) = \int \chi_{\UU}(S^*_{n\log 2}T_x\nu) \dd\mu_0(x) = \int \chi_{S^*_{-n\log 2}\UU}(T_x\nu) \dd\mu_0(x).
  \end{equation*}
  Since $R = \int \la \mu_0 \ra^*_{[-1,1]^2} \dd \overline{\hat Q}(\mu_0)$ we have
  \begin{equation*}
    \iint \chi_{S^*_{-n\log 2}\UU}(T_x\nu) \dd\mu_0(x) \dd\overline{\hat Q}(\mu_0) = R(S^*_{-n\log 2}\UU) = R(\UU) = 0
  \end{equation*}
  and hence,
  \begin{equation*}
    \int \chi_{S^*_{-n\log 2}\UU}(T_x\nu) \dd\mu_0(x) = 0
  \end{equation*}
  for $\overline{\hat Q}$-almost all $\mu_0$ which is what we wanted.

  Conversely, for any measure $\nu$, since $0 \in B$ we have
  \begin{equation*}
    \la \nu \ra^*_{[-1,1]^2} \ll \int \la \theta \ra^*_B \dd\la \nu \ra^*_{[-1,1]^2}(\theta).
  \end{equation*}
  Since $R = \int \la \nu \ra^*_{[-1,1]^2} \dd \overline{\hat Q}(\nu)$ we get
  \begin{equation*}
    R \ll \iint \la \theta \ra^*_B \dd\la \nu \ra^*_{[-1,1]^2}(\theta) \dd \overline{\hat Q}(\nu) = \int \la \theta \ra^*_B \dd R(\theta) = R'.
  \end{equation*}
  Finally, $\cent Q = \tfrac{1}{\log 2} \int_0^{\log 2} S^*_t \cent_0 Q \dd t$ is quasi-Palm since $R = \cent_0Q$ has the same property.
\end{proof}

Theorem \ref{thm:theorem11} follows immediately from Theorem \ref{thm:cent-FD} and the following result.

\begin{theorem} \label{thm:theorem19}
  If $\mu \in \PP([-\tfrac12,\tfrac12]^2)$, then for $\mu$-almost every $x \in \R^2$ and for every $P \in \TD(\mu,x)$ there exists a CP-distribution $Q_z \in \MD(T_z\mu,x-z)$ such that $P = (\cent\hat Q_z)^\square$ for $\LL$-almost all $z \in [-\tfrac12,\tfrac12]^2$.
\end{theorem}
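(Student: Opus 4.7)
The plan is to realize every tangent distribution $P\in\TD(\mu,x)$ as the restricted centering of an accumulation point of the discrete CP process, leaning on the identity (established in the proof of Theorem \ref{thm:cent-FD}) that $S_{\log 2}^*(T_x\mu)=T^*_{x'}\mu'$ whenever $(\mu',x')=M^*(\mu,x)$. This identity says that a single CP step followed by re-centering at the new base-point is exactly the scenery flow run for time $\log 2$; consequently the operation $\cent$ builds the full scaling flow out of the CP dynamics by suspending with height $\log 2$. The auxiliary shift $z$ appears for a technical reason: the CP process is sensitive to the fixed dyadic grid and degenerates when $x$ lies on, or very near, a dyadic boundary; replacing $(\mu,x)$ by $(T_z\mu,x-z)$ slides the grid off of $x$, and Fubini on $z$ washes out the arbitrary choice of grid.

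I would proceed as follows. Fix $\mu$, a $\mu$-typical $x$, and a tangent distribution $P\in\TD(\mu,x)$ realized as the weak limit of $\la\mu\ra^\square_{x,T_k}$ along $T_k\to\infty$; set $N_k=\lfloor T_k/\log 2\rfloor$. For each $z\in[-\tfrac12,\tfrac12]^2$, by Lemma \ref{thm:lemma15} and compactness of $\PP(\MM^\square\times[-1,1]^2)$, extract (via a diagonal subsequence argument along $N_k$) a weak accumulation point $Q_z$ of the empirical CP-averages $\la T_z\mu,x-z\ra_{N_k}$; by Lemmas \ref{thm:generated-CP} and \ref{thm:trans-leb-intensity}, for $\LL$-almost every $z$ the limit $Q_z$ is a CP-distribution of Lebesgue intensity, which already guarantees that $Q_z$-almost every measure assigns no mass to the dyadic boundary $\Delta$. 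Decompose the continuous-time scenery average into blocks of length $\log 2$,
\[
\la\mu\ra^\square_{x,T_k}=\frac{1}{N_k\log 2}\sum_{n=0}^{N_k-1}\int_0^{\log 2}\delta_{S_s^\square\mu^\square_{x,n\log 2}}\,ds+o(1),
\]
and match it block by block with the corresponding decomposition of $(\cent\hat Q_z)^\square$; pushing the limit through using continuity of $Q\mapsto(\cent\hat Q)^\square$ (Lemma \ref{thm:cent-cont}) then concludes $(\cent\hat Q_z)^\square=P$.

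The principal obstacle is this block-by-block matching. At time $n\log 2$ the scenery of $\mu$ at $x$ reads off $\mu$ through the window $B(x,2^{-n})$ centered at $x$, whereas the centered CP iterate $\cent_0 M^n(T_z\mu,x-z)$ reads off $\mu$ through the window $D_n(x-z)+z$, a dyadic cube of the same side length but displaced from $x$ by an amount depending on the fractional part of $2^n(x-z)$. For any individual $z$ these two windows disagree by a translation, so the scenery block and the CP block are not equal measures. The key point is that as $z$ varies over $[-\tfrac12,\tfrac12]^2$ the displacement equidistributes, and when combined with the resampling of base-point built into $\cent_0\hat Q=\int\la\mu\ra^*_{[-1,1]^2}\,d\overline{\hat Q}(\mu)$---where the new base-point is drawn afresh from $\mu$ itself---the displacement is absorbed. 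A Fubini argument in $z$ then extracts a full-$\LL$-measure set of shifts for which $(\cent\hat Q_z)^\square=P$; the potential discontinuity of $S_s^\square$ on measures charging $\partial[-e^{-s},e^{-s}]^2$ is neutralized by the Lebesgue-intensity conclusion already obtained.
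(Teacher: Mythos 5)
Your proposal correctly sets up the objects: extracting $Q_z$ as an accumulation point of $\langle T_z\mu,\,x-z\rangle_{N_k}$ along the same subsequence that realizes $P$, invoking Lemmas \ref{thm:trans-leb-intensity} and \ref{thm:generated-CP} to conclude $Q_z$ is a CP-distribution of Lebesgue intensity for $\LL$-a.e.\ $z$, and decomposing the scenery average into blocks of length $\log 2$. You also correctly pinpoint the obstacle: the scenery block at level $n$ reads $\mu$ through $B(x,2^{-n})$, whereas the CP iterate reads it through the dyadic cube $D_n(x-z)+z$, and these windows are genuinely displaced.

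The gap is in your resolution of that obstacle. You propose to defeat the displacement by ``equidistribution as $z$ varies'' combined with ``resampling of base-point in $\cent_0\hat Q$'' and a Fubini argument in $z$. This cannot work as stated: an equidistribution-over-$z$ argument could at best yield an identity for the $z$-average $\int(\cent\hat Q_z)^\square\,d\LL(z)$, not the pointwise statement $(\cent\hat Q_z)^\square = P$ for $\LL$-a.e.\ $z$ that the theorem asserts (there is no positivity or monotonicity to upgrade a Fubini average to a pointwise identity). And the ``resampling'' built into $\cent_0\hat Q$ is just the adaptedness of $\hat Q$ — it does not cancel the window offset. In the paper, the shift $z$ plays a single role: to ensure $Q_z$ is a genuine CP-distribution with Lebesgue intensity. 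Once that is secured, $z$ is fixed and plays no further part. The displacement is instead handled by introducing a \emph{delay} parameter $m$: one compares $P_m := S^\square_{m\log 2}\cent Q_z$ (which converges to $(\cent\hat Q_z)^\square$ as $m\to\infty$) with the scenery averages, using the observation that the scenery block at time $k+m$ equals $S^\square_{m\log 2}\cent\delta_{(\mu_k,x_k)}$ whenever $\dist(x_k,\Delta_0)\ge 2^{-m}$ — after $m$ extra zoom steps the scenery window fits entirely inside the current dyadic cube, so the two pictures literally coincide. The Lebesgue-intensity/Birkhoff conclusion then shows the empirical frequency of the bad $k$'s tends to $c_m = O(2^{-m})$, and the two-parameter limit ($i\to\infty$, then $m\to\infty$) closes the argument. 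This delay-and-Birkhoff device is the essential idea missing from your proposal; without it the block-by-block matching you describe does not actually hold for any individual block.
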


\begin{proof}
  Choose $x \sim \mu$. Since $P$ is an accumulation point of $\{ \la \mu \ra^\square_{x,T} \}_{T \in \R_+}$, there exists a subsequence so that Lemmas \ref{thm:trans-leb-intensity} and \ref{thm:generated-CP} imply that the micromeasure distribution $Q = \lim_{i \to \infty} \la T_z\mu,x-z \ra_{n_i}$ is a CP-distribution for $\LL$-almost all $z \in [-\tfrac12,\tfrac12]^2$. Observe also that $\{ \la \mu \ra^\square_{x,T} \}_{T \in \R_+} = \{ \la T_z\mu \ra^\square_{x-z,T} \}_{T \in \R_+}$ for all $z \in \R^2$. We shall omit the translation by $z$ for notational convenience. We will prove that the extension of $Q$ has the claimed properties.
  
  Let $P_m = S_{m\log 2}^\square \cent Q$ for all $m \in \N$. This means that if
  \begin{equation*}
    g_m(\nu,y) = \int_0^1 \delta_{S^\square_{(m+t)\log 2}T_y\nu} \dd t,
  \end{equation*}
  then $P_m = g_mQ$. Recall that the maps $S^\square_{t\log 2}$ are discontinuous at measures $\nu$ which satisfy $\nu(\partial [-e^{-t\log 2},e^{-t\log 2}]^2)>0$. A given measure $\nu$ can charge the boundary of a ball centered at the origin only at most countably many radii. Therefore, if $(\nu_n,y_n) \to (\nu,y)$, then $S^\square_{(m+t)\log 2}T_{y_n}\nu_n \to S^\square_{(m+t)\log 2}T_y\nu$ for all except at most countably many $t$ and hence $g_m(\nu_n,y_n) \to g_m(\nu,y)$.
  
  From the construction of the extended version $\hat Q$ of $Q$ and the definition of the centering operation we see that $P_m \to (\cent\hat Q)^\square$. It therefore suffices to show that $P_m \to P$. To that end, write $P_{m,i} = g_m\la \mu,x \ra_{n_i}$ for all $i \in \N$. The continuity of $g_m$ implies that $P_m = g_mQ = \lim_{i \to \infty}g_m\la \mu,x \ra_{n_i} = \lim_{i \to \infty} P_{m,i}$.

  Fix $m$ and write $(\mu_k,x_k) = M^k(\mu,x)$ for all $k \in \N$. If $x$ is such that $\dist(x,\Delta_0) \ge 2^{-m}$, where $\Delta_0 = \bigcup_{i=1}^4 \partial\fii_i([-1,1]^2)$, then we have
  \begin{equation*}
    S^\square_{(k+m)\log 2}T_x\mu = S^\square_{m\log 2}T_{x_k}\mu_k.
  \end{equation*}
  Thus,
  \begin{equation*}
    \int_0^1 \delta_{S^\square_{(k+m+t)\log 2}T_x\mu} \dd t = S^\square_{m\log 2}\cent\delta_{(\mu_k,x_k)}.
  \end{equation*}
  Now
  \begin{align*}
    P_{m,i} - \la \mu \ra_{x,n_i} &= \tfrac{1}{n_i} \sum_{k=1}^{n_i} S^\square_{m\log 2}\cent\delta_{(\mu_k,x_k)} - \tfrac{1}{n_i} \sum_{k=0}^{n_i-1} \int_0^1 \delta_{S^\square_{t \log 2}T_x\mu} \dd t \\
    &= \tfrac{1}{n_i} \sum_{k=1}^{n_i-m} \biggl( S^\square_{m\log 2}\cent\delta_{(\mu_k,y_k)} - \int_0^1 \delta_{S^\square_{(k+m+t)\log 2}T_x\mu} \dd t \biggr) + \tfrac{m}{n_i} \theta_{m,i},
  \end{align*}
  where $\theta_{m,i}$ is a probability measure. Observe that $\tfrac{m}{n_i} \theta_{m,i} \to 0$ as $i \to \infty$ and in the average, the summands vanish provided that $\dist(x,\Delta_0) \ge 2^{-m}$. Thus, by letting $i \to \infty$, the right-hand side of the above equation is a probability measure whose total mass is asymptotic to
  \begin{equation} \label{eq:TD-FD_asymptotic}
    \tfrac{1}{n_i} \#\{ n \in \{ 1,\ldots,n_i\} : \dist(x_n,\Delta_0) \ge 2^{-m} \}.
  \end{equation}
  Applying the Birkhoff ergodic theorem as in the proof of Lemma \ref{thm:trans-leb-intensity}, we see that
  \begin{equation*}
    \tfrac{1}{n_i} \sum_{k=1}^{n_i} \delta_{x_k} \to \LL
  \end{equation*}
  as $i \to \infty$. Therefore, as $i \to \infty$, \eqref{eq:TD-FD_asymptotic} converges to $1-c_m$, where $c_m \to 0$ as $m \to \infty$. This completes the proof.
\end{proof}

\subsection{Ergodic fractal distributions}
We say that a distribution $P$ on $\MM^*$ is an \emph{ergodic fractal distribution} if it is a fractal distribution and the flow $(\MM^*,P,S^*)$ is ergodic.

Theorem \ref{thm:theorem11} shows that tangent distributions are fractal distributions. The following result shows that ergodic fractal distributions are tangent distributions. The non-ergodic case will be addressed in Theorem \ref{thm:theorem22}.

\begin{theorem} \label{thm:EFD-generated}
  If $P^\square$ is a restricted ergodic fractal distribution on $\MM^\square$, then
  \begin{equation*}
    \TD(\mu,x) = \{ P^\square \}
  \end{equation*}
  for $P^\square$-almost all $\mu \in \MM^\square$ and $\mu$-almost all $x \in \R^2$.
\end{theorem}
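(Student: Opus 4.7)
The plan is to combine two ingredients: the Birkhoff ergodic theorem for the ergodic flow $(\MM^\square, P^\square, S^\square)$, which produces a full-measure set of measures whose forward sceneries (zooming at the origin) equidistribute for $P^\square$; and the quasi-Palm property of the extended version $P$, which is exactly the tool for transferring such genericity from ``zooming at $0$'' to ``zooming at a $\mu$-random point $x$''. This pairing of ergodicity with the quasi-Palm property is what makes fractal distributions useful.

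First I would use compactness of $[-1,1]^2$ and hence of $\MM^\square$ to pick a countable uniformly dense family $\{g_n\} \subset C(\MM^\square)$. Applying the Birkhoff ergodic theorem to each $g_n$ and intersecting the resulting full-measure sets yields a Borel set $\AA \subset \MM^\square$ with $P^\square(\AA)=1$ such that for every $\nu \in \AA$,
\begin{equation*}
  \frac{1}{T}\int_0^T \delta_{S_t^\square \nu} \dd t \longrightarrow P^\square
\end{equation*}
weakly as $T \to \infty$. Via the one-to-one correspondence between restricted and extended versions, $\AA$ pulls back to a set $\tilde\AA \subset \MM^*$ of full $P$-measure, characterised by $\nu \in \tilde\AA$ iff $\nu^\square \in \AA$. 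Next I would apply the quasi-Palm property of $P$ to $\tilde\AA$ with the bounded neighbourhood $U = (-1,1)^2$ of the origin: since $P(\tilde\AA) = 1$, the relation $P \sim \int \la\mu\ra_U^* \dd P(\mu)$ forces that for $P$-a.e.\ $\mu$ and $\mu$-a.e.\ $x \in U$ one has $T_x^* \mu \in \tilde\AA$. A standard exhaustion by larger neighbourhoods (or, equivalently, the scale invariance of the system through $S^*_t$) upgrades this to $\mu$-a.e.\ $x \in \R^2$.

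Finally, for such a pair $(\mu,x)$, I would observe the crucial identity $S_t^\square(T_x^* \mu) = S_t^\square T_x \mu = \mu_{x,t}^\square$, because the overall positive scalars appearing in the two levels of normalization cancel. Combined with $T_x^* \mu \in \tilde\AA$, i.e.\ $(T_x^* \mu)^\square \in \AA$, this gives
\begin{equation*}
  \la \mu \ra_{x,T}^\square = \frac{1}{T}\int_0^T \delta_{\mu_{x,t}^\square} \dd t \longrightarrow P^\square
\end{equation*}
weakly as $T \to \infty$, which is exactly the assertion $\TD(\mu,x) = \{P^\square\}$. The main technical obstacle is that $S_t^\square$ is discontinuous at measures charging $\partial[-e^{-t},e^{-t}]^2$, and likewise the projection $\nu \mapsto \nu^\square$ is discontinuous on measures with mass on $\partial[-1,1]^2$. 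This is harmless because any fixed Radon measure can charge only countably many concentric boundaries, so these discontinuities sit on a set of zero Lebesgue measure in $t$ and do not affect the integrated time averages; one can therefore justify pulling test functions $g \in C(\MM^\square)$ back to bounded measurable functions $\tilde g(\nu) = g(\nu^\square)$ on $\MM^*$ and applying the Birkhoff theorem there without losing the required weak convergence.
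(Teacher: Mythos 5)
Your proposal is correct and follows essentially the same route as the paper: apply the Birkhoff ergodic theorem simultaneously to a countable dense family of continuous observables on $\MM^\square$ to obtain a full-measure set of scale-generic measures, then transfer this genericity from the origin to $\mu$-typical points via the quasi-Palm property, closing with the identity $S_t^\square(T_x^*\mu)=\mu_{x,t}^\square$. The only cosmetic difference is that the paper runs Birkhoff on the extended flow $(\MM^*,P,S^*)$ with the bounded measurable observable $\hat f(\nu)=f(\nu^\square)$ (so the flow is continuous and the observable is not), while you run it on $(\MM^\square,P^\square,S^\square)$ directly (continuous observable, discontinuous flow); both are equivalent, and your final ``exhaustion'' step is actually unnecessary since the paper's quasi-Palm formulation already yields $\nu$-a.e.\ $z\in\R^2$ directly.
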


\begin{proof}
  The proof of similar to that of Lemma \ref{thm:lemma18}. Let $P$ be the extended version of $P^\square$. For each $f \in C(\PP(\MM^\square))$ let $\hat f \colon \MM^* \to \R$ denote the map $\mu \mapsto f(\mu^\square)$. Observe that this map is no longer continuous, but it is still bounded and measurable. For such a map $f$, the Birkhoff ergodic theorem guarantees that
  \begin{equation} \label{eq:FD-Birkhoff}
    \lim_{T \to \infty} \frac{1}{T} \int_0^T f(S^\square_t\mu) \dd t = \int_{\MM^*} \hat f(\nu) \dd P(\nu)
  \end{equation}
  for $P$-almost all $\mu \in \MM^*$. Since this holds simultaneously for any countable collection of continuous functions and $C(\PP(\MM^\square))$ is separable in the uniform norm, it holds simultaneously for all continuous $f$. By the quasi-Palm property, the limit \eqref{eq:FD-Birkhoff} holds for $\mu = T^*_z\nu$ for $P$-almost all $\nu \in \MM^*$ and for $\nu$-almost all $z \in \R^2$. This is what we wanted to show.
\end{proof}

In Theorem \ref{thm:theorem19}, we showed that for every tangent distribution $P$ there exists a CP-distribution $Q$ such that $P = (\cent\hat Q)^\square$. The following theorem gives the same result for fractal distributions.

\begin{theorem} \label{thm:FDs-are-CPs}
  If $P$ is a fractal distribution on $\MM^*$, then there is a CP-distribution $Q$ such that $\cent\hat Q = P$.
\end{theorem}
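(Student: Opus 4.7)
My plan is to reduce to the ergodic case and then invoke the constructions already in place. First, I would apply the ergodic decomposition theorem to the flow $(\MM^*, P, S^*)$, writing $P = \int P_\omega \, dP(\omega)$ with each $P_\omega$ ergodic and $S^*$-invariant. A disintegration argument, using that the quasi-Palm condition $P \sim \int \langle \mu \rangle^*_U dP(\mu)$ expresses an equivalence of measures that splits under the ergodic decomposition, shows that $P$-almost every $P_\omega$ is quasi-Palm, hence an ergodic fractal distribution.

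Next, for such $\omega$, Theorem \ref{thm:EFD-generated} produces a measure $\mu$ and a point $x \in \spt(\mu)$ with $P_\omega^\square \in \TD(\mu, x)$. Applying Theorem \ref{thm:theorem19} to this pair yields a CP-distribution $Q_\omega$ with $(\cent \hat Q_\omega)^\square = P_\omega^\square$. Since $\cent \hat Q_\omega$ and $P_\omega$ are both $S^*$-invariant distributions on $\MM^*$ whose restrictions to $\MM^\square$ coincide, the one-to-one correspondence between $S^*$- and $S^\square$-invariant distributions forces $\cent \hat Q_\omega = P_\omega$. I would use a standard measurable selection argument (for instance via Kuratowski--Ryll-Nardzewski, applied to the closed set of CP-distributions realizing a given restricted centering, which is closed thanks to Lemma \ref{thm:cent-cont}) to ensure that $\omega \mapsto Q_\omega$ is measurable.

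Finally, I would define $Q = \int Q_\omega \, dP(\omega)$ and verify that it is a CP-distribution. Invariance is immediate from $M Q = \int M Q_\omega \, dP(\omega) = Q$, and adaptedness follows from Lemma \ref{thm:adapted2}: taking $\overline{Q} = \int \overline{Q_\omega} \, dP(\omega)$ and applying Fubini produces the identity characterizing adaptedness. Both the extension $Q \mapsto \hat Q$ and the centering $\hat Q \mapsto \cent \hat Q$ are linear operations (the former by its characterization as the unique $M^*$-invariant lift, the latter by its explicit integral formula in terms of $\overline{\hat Q}$), and so they commute with the averaging over $\omega$. This gives
\[
  \cent \hat Q = \int \cent \hat Q_\omega \, dP(\omega) = \int P_\omega \, dP(\omega) = P,
\]
as required. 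The main obstacle is to justify rigorously that the quasi-Palm property descends to ergodic components and to carry out the measurable selection of $Q_\omega$; once these are in hand, the remaining steps reduce to checking linearity of extension and centering and applying Fubini.
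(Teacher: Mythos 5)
Your strategy mirrors the paper's own: reduce to the ergodic case, where Theorem \ref{thm:EFD-generated} together with Theorem \ref{thm:theorem19} produces an ergodic CP-distribution $Q$ with $\cent\hat Q = P$, and then lift to general $P$ via its ergodic decomposition, a measurable selection, and linearity of centering and extension. The paper's lifting step uses Varadarajan's lemma (a push-forward existence statement for Borel surjections between standard Borel spaces) where you propose Kuratowski--Ryll-Nardzewski; these are interchangeable in this context, and your Fubini verification of adaptedness and invariance of $Q = \int Q_\omega\,\dd P(\omega)$ is essentially what the convexity Lemma \ref{thm:lemma15} already gives. So the shape of the argument is the same.

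The one genuine gap is your claim that ``a disintegration argument, using that the quasi-Palm condition $P \sim \int \langle\mu\rangle^*_U\,\dd P(\mu)$ expresses an equivalence of measures that splits under the ergodic decomposition,'' shows that $P$-almost every ergodic component is quasi-Palm. This does not follow from disintegration alone: if $P = \int P_\omega\,\dd P(\omega)$ and $R = \int R_\omega\,\dd P(\omega)$ with $R_\omega = \int\langle\mu\rangle^*_U\,\dd P_\omega(\mu)$, then $P \sim R$ is entirely compatible with $P_\omega \perp R_\omega$ for every $\omega$ (one can cook up two-component examples where the failures of absolute continuity cancel in the average). Crucially, $R$ is not $S^*$-invariant, so $\{R_\omega\}$ is not an ergodic decomposition and the uniqueness of ergodic decompositions gives you no purchase. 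The fact that almost every ergodic component of a fractal distribution is a fractal distribution is precisely Theorem \ref{thm:theorem10}, and the paper's proof of it routes through CP-distributions rather than any direct splitting of the quasi-Palm condition; notice moreover that Theorem \ref{thm:theorem10} as stated in this survey cites Theorem \ref{thm:FDs-are-CPs}, so invoking it here would be circular. To repair your argument you would need to either reproduce the CP-route argument for the ergodic components (using that ergodic components of a CP-distribution are CP-distributions, Lemma \ref{thm:lemma18}, together with Theorem \ref{thm:cent-FD}), or observe, as the paper tacitly does, that only the already-proved ergodic case of the present theorem is required for that. As it stands, your ``disintegration argument'' is the step that fails.
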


\begin{proof}
  If $P$ is an ergodic fractal distribution and $P^\square$ its restricted version, then Theorem \ref{thm:EFD-generated} shows that $\TD(\mu,x) = \{ P^\square \}$ for $P^\square$-almost all $\mu \in \MM^\square$ and for $\mu$-almost all $x \in \R^2$. Therefore, after possibly adjusting the dyadic grid, Theorem \ref{thm:theorem19} implies that there exists an ergodic CP-distribution $Q$ such that $P = \cent\hat Q$.
  
  The centering $\cent$ from the set of extended ergodic CP-distributions to the set of ergodic fractal distributions is thus surjective. Since the associate sets can be verified to be Borel and $\cent$ is continuous by Lemma \ref{thm:cent-cont}, it follows from \cite[Lemma 2.2]{Varadarajan1963} that for each probability measure $\tau$ on the family of all ergodic fractal distributions there is a probability measure $\tau'$ on the family of all ergodic CP-distributions such that $\tau = \cent\tau'$. Since, in particular, this applies to the ergodic decomposition of a given fractal distribution $P$, we have found a CP-distribution $Q$ for which $\cent \hat Q = P$.
\end{proof}

\begin{theorem} \label{thm:theorem10}
  If $P$ is a fractal distribution on $\MM^*$, then $P$-almost every ergodic component of $P$ is an ergodic fractal distribution.
\end{theorem}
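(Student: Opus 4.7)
The claim decomposes into three parts: for $P$-almost every ergodic component $P_\omega$, we need (i) $S^*$-invariance, (ii) ergodicity of the flow $(\MM^*, P_\omega, S^*)$, and (iii) the quasi-Palm property. Parts (i) and (ii) are immediate from the ergodic decomposition theorem applied to the measure-preserving flow $(\MM^*, P, S^*)$. Thus the entire content of the theorem lies in transferring the quasi-Palm property from $P$ to almost every ergodic component.

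My plan is to use the correspondence between fractal distributions and CP-distributions as a bridge. By Theorem \ref{thm:FDs-are-CPs}, we can write $P = \cent\hat Q$ for some CP-distribution $Q$ on $\MM^\square$. Let $Q = \int Q_\omega \dd Q(\omega)$ be the ergodic decomposition of $Q$. By Lemma \ref{thm:lemma18}, $Q$-almost every $Q_\omega$ is a CP-distribution, and each such $Q_\omega$ is ergodic as a discrete measure-preserving system. The extension operation $Q \mapsto \hat Q$ and the centering operation are both linear and commute with integration against $Q$, so
\begin{equation*}
  P = \cent\hat Q = \int \cent\hat{Q_\omega} \dd Q(\omega).
\end{equation*}
By Theorem \ref{thm:cent-FD}, each $\cent\hat{Q_\omega}$ is a fractal distribution on $\MM^*$.

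The key step is then to verify that $\cent\hat{Q_\omega}$ is \emph{ergodic} as a flow whenever $Q_\omega$ is ergodic as a discrete CP-distribution. Recall from the construction preceding Lemma \ref{thm:cent-cont} that the flow $(\MM^*, \cent\hat{Q_\omega}, S^*)$ is a factor of the suspension flow of the discrete system $(\MM^* \times [-1,1]^2, \hat{Q_\omega}, M^*)$. Two facts recorded in the subsection on factors, extensions and suspensions then conclude this step: the suspension flow of an ergodic discrete system is ergodic, and factors of ergodic systems are ergodic. To apply these, I also need that the two-sided extension $\hat{Q_\omega}$ of the one-sided ergodic system $Q_\omega$ is ergodic, which is again recorded in that subsection. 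Chaining these gives ergodicity of $\cent\hat{Q_\omega}$.

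Combining these ingredients, we have exhibited $P$ as an integral of ergodic fractal distributions. By the uniqueness (up to $P$-null sets) part of the ergodic decomposition theorem, this decomposition must agree $P$-almost surely with the canonical ergodic decomposition of $(\MM^*, P, S^*)$. Hence $P$-almost every ergodic component $P_\omega$ of $P$ coincides with some $\cent\hat{Q_\omega}$ and is therefore an ergodic fractal distribution. The main obstacle I expect is not conceptual but bookkeeping: one must be careful that the measurable selection $\omega \mapsto Q_\omega$ behaves well under the extension and centering operations (both are Borel maps by Lemma \ref{thm:cent-cont} and the construction of $\hat Q$), so that the integral representation $P = \int \cent\hat{Q_\omega}\dd Q(\omega)$ is a genuine ergodic decomposition and the uniqueness clause can be invoked.
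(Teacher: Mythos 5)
Your proposal is correct and takes essentially the same route as the paper: invoke Theorem~\ref{thm:FDs-are-CPs} to realize $P = \cent\hat Q$ for a CP-distribution $Q$, pass to the ergodic decomposition of $Q$ (resp.\ $\hat Q$) via Lemma~\ref{thm:lemma18}, obtain ergodicity of each $\cent\hat Q_\omega$ through the factor/extension/suspension machinery, apply Theorem~\ref{thm:cent-FD} to see that each $\cent\hat Q_\omega$ is a fractal distribution, and conclude by the uniqueness of the ergodic decomposition of $P$. The paper compresses these steps into a few lines but the underlying argument is identical.
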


\begin{proof}
  Let $P$ be a fractal distribution on $\MM^*$. Recalling Theorem \ref{thm:FDs-are-CPs}, let $Q$ be a CP-distribution whose extension $\hat Q$ satisfies $\cent\hat Q = P$. The ergodic components of $\hat Q$ have discrete centerings that are $S^*_{\log 2}$-invariant and, being factors of ergodic CP-distributions, also ergodic. Since the same holds for the suspensions and the centerings of the ergodic components of $\hat Q$ integrate to $P$, we have, by Theorem \ref{thm:cent-FD}, shown the claim.
%
%
\end{proof}

\begin{theorem} \label{thm:theorem20}
  The set $\{ P^\square : P \text{ is a fractal distribution} \} \subset \PP(\MM^\square)$ is convex and compact.
\end{theorem}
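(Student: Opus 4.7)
Convexity is a direct check from the definitions. Given fractal distributions $P_1, P_2$ on $\MM^*$ and $\lambda \in [0,1]$, set $P = \lambda P_1 + (1-\lambda)P_2$. Then $S^*_t P = P$ by linearity of push-forward, and for any open bounded neighborhood $U$ of the origin
\begin{equation*}
  \int \la \nu \ra_U^* \dd P(\nu) = \lambda \int \la \nu \ra_U^* \dd P_1(\nu) + (1-\lambda)\int \la \nu \ra_U^* \dd P_2(\nu).
\end{equation*}
Since the null sets of a convex combination of positive measures coincide with the intersection of the null sets of the summands, the quasi-Palm equivalences $P_i \sim \int \la \nu \ra_U^* \dd P_i(\nu)$ yield $P \sim \int \la \nu \ra_U^* \dd P(\nu)$. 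Because the restriction $P \mapsto P^\square$ is linear, convexity passes to the restricted versions.

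For compactness, the space $\MM^\square = \PP([-1,1]^2)$ is compact metrizable and hence so is $\PP(\MM^\square)$, so it suffices to establish closedness. The idea is to realize restricted fractal distributions as images of CP-distributions under the centering operation. Suppose $P_n^\square \to P_\infty^\square$ weakly with each $P_n$ a fractal distribution. By Theorem \ref{thm:FDs-are-CPs}, together with the grid-adjustment remark in its proof, one may choose for each $n$ a CP-distribution $Q_n$ on $\MM^\square \times [-1,1]^2$ with Lebesgue intensity such that $(\cent \hat Q_n)^\square = P_n^\square$. Passing to a subsequence, $Q_n \to Q_\infty$ in the compact space $\PP(\MM^\square \times [-1,1]^2)$. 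Adaptedness of $Q_\infty$ passes to the limit via the product-test-function characterization of Lemma \ref{thm:adapted2}, following the pattern of Lemma \ref{thm:lemma15}. Lebesgue intensity is captured by the linear continuous conditions $\int g(x) \dd Q(\mu,x) = \int g \dd\LL$ for $g \in C([-1,1]^2)$, so $Q_\infty$ also has Lebesgue intensity, and hence assigns zero mass to the set of pairs $(\mu,x)$ with $x$ on the dyadic-grid boundary.

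Lebesgue intensity implies that $M^\square$ is $Q_\infty$-almost everywhere continuous, and so by the portmanteau theorem $\int f \circ M^\square \dd Q_n \to \int f \circ M^\square \dd Q_\infty$ for every $f \in C(\MM^\square \times [-1,1]^2)$. Combined with $M^\square Q_n = Q_n \to Q_\infty$, this gives $M^\square Q_\infty = Q_\infty$, and thus $Q_\infty$ is a CP-distribution. By the continuity of centering (Lemma \ref{thm:cent-cont}),
\begin{equation*}
  (\cent \hat Q_\infty)^\square = \lim_{n \to \infty} (\cent \hat Q_n)^\square = P_\infty^\square,
\end{equation*}
so $P_\infty^\square$ is a restricted fractal distribution.

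The main obstacle throughout is the discontinuity of $M^\square$ (and, similarly, of $S^\square_t$) on measures charging dyadic-cell boundaries: neither adaptedness nor invariance is automatically closed under weak convergence in the presence of such discontinuities. This is precisely what forces the detour through CP-distributions of Lebesgue intensity, a class rich enough to represent every restricted fractal distribution (by the grid-averaging machinery surrounding Lemma \ref{thm:trans-leb-intensity}) yet well-behaved enough for $M^\square$ to be almost everywhere continuous.
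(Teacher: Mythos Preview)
Your argument is correct and follows the same route as the paper: represent each $P_n^\square$ as the centering of a CP-distribution $Q_n$ via Theorem~\ref{thm:FDs-are-CPs}, pass to a subsequential limit $Q_\infty$, verify that $Q_\infty$ is again a CP-distribution, and conclude by continuity of centering (Lemma~\ref{thm:cent-cont}) together with Theorem~\ref{thm:cent-FD}.

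The one point of divergence is how compactness of CP-distributions is obtained. The paper simply cites Lemma~\ref{thm:lemma15}, which is proved in the \emph{symbolic} setting: there cylinders are clopen, so $M$ is genuinely continuous on $\Omega$ and both adaptedness and invariance pass to weak limits without further fuss. You instead work directly in the Euclidean picture, where $M^\square$ is discontinuous along the dyadic grid, and compensate by arranging the $Q_n$ to have Lebesgue intensity; since this condition is closed (it is tested by continuous linear functionals), $Q_\infty$ inherits it, whence $[Q_\infty](\Delta)=0$ and so $\overline{Q_\infty}$-almost every $\mu$ gives zero mass to every dyadic face. Combined with adaptedness this makes $M^\square$ continuous $Q_\infty$-almost everywhere, and invariance follows. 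This is a legitimate and self-contained way to handle the discontinuity, at the cost of tracing through the proofs of Lemma~\ref{thm:trans-leb-intensity} and Theorem~\ref{thm:FDs-are-CPs} to see that the representing $Q_n$ can indeed be chosen with Lebesgue intensity. The paper's symbolic shortcut avoids that bookkeeping entirely.
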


\begin{proof}
  Since the convexity is clear, let $(P^\square_n)_{n = 1}^\infty$ be a sequence of restricted fractal distributions such that $P^\square_n \to P^\square$ in $\PP(\MM^\square)$. It suffices to show that there exists a fractal distribution $P$ such that its restricted version is $P^\square$.
  
  For each $n \in \N$, Theorem \ref{thm:FDs-are-CPs} shows that there exists a CP-distribution $Q_n$ such that $(\cent\hat Q_n)^\square = P^\square_n$. Since, by Lemma \ref{thm:lemma15}, the set of all CP-distributions is compact, the set $\{ Q_n \}_{n \in \N}$ has an accumulation point, say, a CP-distribution $Q$. Since, by Theorem \ref{thm:cent-FD}, $\cent\hat Q$ is a fractal distribution, it suffices to show that $(\cent\hat Q)^\square = P^\square$. But this follows immediately since, by Lemma \ref{thm:cent-cont}, the centering operation is continuous.
\end{proof}

Together with the uniqueness of the ergodic decomposition, Theorem \ref{thm:theorem20} implies that the family of fractal distributions is a Choquet simplex. A \emph{Poulsen simplex} is a Choquet simplex in which extremal points are dense. Recall that by Theorem \ref{thm:theorem10}, extremal points here are the ergodic fractal distributions.

\begin{theorem} \label{thm:theorem21}
  The set $\{ P^\square : P \text{ is a fractal distribution} \} \subset \PP(\MM^\square)$ is a Poulsen simplex.
\end{theorem}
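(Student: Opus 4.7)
The plan is to combine Theorem \ref{thm:theorem20} with the uniqueness of the ergodic decomposition, so that the set of restricted fractal distributions becomes a Choquet simplex whose extremal points, by Theorem \ref{thm:theorem10}, are precisely the restricted ergodic fractal distributions. The Poulsen property then reduces to showing that every restricted fractal distribution lies in the weak closure of the restricted ergodic fractal distributions.

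I would first reduce the problem to a symbolic one. By Theorem \ref{thm:FDs-are-CPs}, every fractal distribution has the form $\cent\hat Q$ for some CP-distribution $Q$, and by Lemma \ref{thm:cent-cont} the centering map $Q \mapsto (\cent\hat Q)^\square$ is continuous. Moreover, $(\MM^*,\cent\hat Q,S^*)$ is a factor of the suspension of $(\MM^*\times[-1,1]^2,\hat Q,M^*)$, so since suspensions and factors of ergodic systems are ergodic, the centering of an ergodic CP-distribution is an ergodic fractal distribution. Consequently it suffices to prove that the ergodic CP-distributions are weakly dense in the space of all CP-distributions. Using the ergodic decomposition together with Lemma \ref{thm:lemma18} and a standard truncation, this in turn reduces to approximating finite convex combinations $R = \sum_{i=1}^k \lambda_i Q_i$ of ergodic CP-distributions by a single ergodic CP-distribution.

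The construction I envision would use Lemma \ref{thm:generated-CP} to pick, for each $i$, a measure $\mu^{(i)}$ generating $Q_i$ together with a $\mu^{(i)}$-generic $\iii^{(i)}$, and then amalgamate these by a cutting-and-stacking procedure: choose a very sparse sequence of scales $n_1 \ll n_2 \ll \cdots$ and build a measure $\mu$ on $\Sigma$ whose dyadic structure between levels $n_j$ and $n_{j+1}$ mimics that of $\mu^{(i_j)}$, where the labels $i_j$ are drawn independently with probabilities $(\lambda_1,\ldots,\lambda_k)$. If the scales grow rapidly enough, the law of large numbers applied to the $i_j$, combined with the within-block convergence $\la \mu^{(i)},\iii^{(i)} \ra_n \to Q_i$, should force the Birkhoff averages $\la \mu,\iii \ra_n$ to approach $R$ in the weak topology.

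The main obstacle will be verifying that almost every realization of $\mu$ actually generates an \emph{ergodic} CP-distribution, rather than some non-trivial mixture. I would attack this via a Kolmogorov zero-one argument applied to the tail $\sigma$-algebra of the independent block labels $(i_j)$, combined with the ergodicity of each $Q_i$ within its block, in order to conclude that the micromeasure distribution of $\mu$ is $\mu$-almost surely a single CP-distribution independent of the realization. Once that is established, one tunes the block lengths so that boundary effects between adjacent blocks contribute negligibly and the produced ergodic CP-distribution lies within any prescribed weak neighborhood of $R$, which is all that is needed.
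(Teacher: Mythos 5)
Your reduction is exactly the paper's: via Lemma~\ref{thm:cent-cont} and Theorem~\ref{thm:FDs-are-CPs} you pass to the symbolic side and, by Krein--Milman (equivalently, the ergodic decomposition plus a truncation), you only need to approximate a finite convex combination $R = \sum_{i=1}^k \lambda_i Q_i$ of ergodic CP-distributions by ergodic CP-distributions. The gap is in the proposed splicing. You take block lengths $n_1 \ll n_2 \ll \cdots$ growing rapidly and iid labels $i_j$ with law $(\lambda_1,\ldots,\lambda_k)$. But with rapidly growing blocks the empirical averages $\la \mu,\iii\ra_n$ do \emph{not} converge to $R$: at times $n$ deep inside the $j$-th block the average $\frac{1}{n}\sum_{k<n}\delta_{M^k(\mu,\iii)}$ is dominated by that single block and is therefore close to $Q_{i_j}$, not to the mixture. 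More precisely, the fraction of time spent in type-$i$ blocks up to $\sum_{j\le J} n_j$ is the $n_j$-weighted frequency of $\{i_j=i\}$, and when $n_{j+1}/n_j\to\infty$ this is essentially $\mathbf{1}[i_J=i]$, which oscillates. Thus the micromeasure distribution is not unique, the constructed measure is not uniformly scaling, and the whole approximation collapses. The law of large numbers you invoke controls the unweighted frequency of labels, which is the wrong quantity here.

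The paper resolves this by using blocks of a \emph{fixed} length $N$ arranged deterministically and periodically ($t_1$ blocks of $R_1$, then $t_2$ blocks of $R_2$, etc.), which keeps the time-weighted frequencies equal to $t_i/q$, and then averages the resulting micromeasure distribution over the $N$ phases to restore stationarity. Ergodicity of the result is then obtained cheaply from the argument in Lemma~\ref{thm:lemma18}: the full-measure set of points generating $Q^N$ has positive $Q^N$-measure, which forces $Q^N$ to be an ergodic CP-distribution. Your proposed route to ergodicity via a Kolmogorov zero-one law on the iid label sequence is also off target: it yields tail-triviality for the labeling process, not ergodicity of the $M$-action on $\PP_\Sigma$, which is what is actually needed. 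Finally, note that the rapidly-growing-blocks construction you describe is genuinely useful, but for a different purpose: it is essentially the device used in Theorem~\ref{thm:theorem22} to build a uniformly scaling measure generating a \emph{given} (possibly non-ergodic) fractal distribution from a sequence $Q_i \to Q$, where one only needs the limit along the block indices, not a time average equal to a fixed mixture. Conflating these two constructions is what derails the argument.
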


\begin{proof}
  The task is to show that ergodic fractal distributions are weakly dense. By Lemma \ref{thm:cent-cont} and Theorem \ref{thm:FDs-are-CPs}, the centering operation is a continuous surjection from the set of all CP-distributions to the set of all fractal distributions taking ergodic CP-distributions to ergodic fractal distributions. It thus suffices to show that ergodic CP-distributions are weakly dense. We shall do this by first approximating a given CP-distribution by a finite convex combination of ergodic CP-distributions. Then we splice together those finite CP-distributions, constructing a sequence of ergodic CP-distributions converging to the convex combination.
  
  Let us explain what we mean by splicing. Given a sequence $\mathbf{n} = (n_k)_{k = 1}^\infty$ of integers, we define the \emph{splicing map} $\spl_{\mathbf{n}} \colon \Sigma^\N \to \Sigma$ by
  \begin{equation*}
    \spl_{\mathbf{n}}((\iii^k)_{k = 1}^\infty) = \iii^1|_{n_1}\iii^2|_{n_2}\cdots.
  \end{equation*}
  We are interested in splicing product measures $\times_{k=1}^\infty \mu^k$ on $\Sigma^\N$, and also product distributions. If $\nu = \spl_{\textbf{n}} \times_{k=1}^\infty \mu^k$, then, by the definition on splicing, the $\nu$-mass of a cylinder $[\hhh]$ is built from the $\mu^k$-masses of consequtive sub-words of $\hhh$ whose length come from the sequence $\mathbf{n}$. Indeed, if $k \in \N$, $n_1+\cdots+n_k < N \le n_1+\cdots+n_{k+1}$, and $\hhh \in \Sigma_N$, then
  \begin{equation*}
    \nu([\hhh]) = \mu^1([\hhh|_{n_1}]) \mu^2([\sigma^{n_1}(\hhh)|_{n_2}])
    \cdots \mu^k([\sigma^{n_1+\cdots+n_{k-1}}(\hhh)|_{n_k}]) \mu^{k+1}([\sigma^{n_1+\cdots+n_k}(\hhh)]).
  \end{equation*}
  To see this, fix $k \in \N$, $0 \le i \le i+j \le n_{k+1}$, and choose $\kkk \in \Sigma_{n_1+\cdots+n_k+1}$ and $\hhh \in \Sigma_j$. Then we have
  \begin{equation} \label{eq:24}
    \nu_\kkk([\hhh]) = \mu^{k+1}_{\sigma^{n_1+\cdots+n_k}(\kkk)|_i}([\hhh]),
  \end{equation} 
  and, in particular, $\nu_\kkk([\hhh]) = \mu^{k+1}([\hhh])$ when $i=0$. The claim follows by iterating \eqref{eq:24}. Relying on this observation, by choosing a suitable sequence $\mathbf{n}$, we can now control the frequency of occurences of the measures $\mu^k$ in $\la \nu,\iii \ra_N$.

  By the Krein-Milman Theorem, it suffices to show that, given a rational probability vector $(t_1/q,\ldots,t_k/q)$ and ergodic CP-distributions $R_1,\ldots,R_k$, there is a sequence $(Q^N)_{N = 1}^\infty$ of ergodic CP-distributions converging to $\tfrac{1}{q} \sum_{i=1}^k t_iR_i$ as $N \to \infty$. For a fixed $N$, to find the ergodic CP-distribution $Q^N$, let
  \begin{equation*}
    \mathbf{n} = (Nt_1,\ldots,Nt_k,Nt_1,\ldots,Nt_k,\ldots).
  \end{equation*}
  Write $\hat R = \times_{i=1}^k \overline{R}_i$ and define an adapted distribution $P^N$ by
  \begin{equation*}
    \overline{P} = \spl_{\mathbf{n}} \times_{i=1}^\infty \hat R.
  \end{equation*}
  Furthermore, define $Q^N$ by
  \begin{equation*}
    \overline{Q}^N = \frac{1}{N} \sum_{j=0}^{N-1} \overline{Q}_j,
  \end{equation*}
  where $\overline{Q}_j$ is the pushforward of $P^N$ under $(\mu,\iii) \mapsto \mu_{\iii|_j}$. It follows from the Birkhoff ergodic theorem that
  \begin{equation*}
    \lim_{L \to \infty} \frac{1}{L} \sum_{i=0}^{L-1} \delta_{\mu_{\iii|_{iN+j}}} = \overline{Q}_j
  \end{equation*}
  for $P$-almost all $(\mu,\iii)$. Therefore, $Q$ is a micromeasure distribution and, by Lemma \ref{thm:generated-CP}, a CP-distribution. Since a full $P^N$-measure set has positive $Q^N$-measure, the proof of Lemma \ref{thm:lemma18} implies that $Q^N$ is ergodic. To show that $Q^N \to \tfrac{1}{q} \sum_{i=1}^k t_iR_i$ as $N \to \infty$ is straightforward.
\end{proof}

\subsection{Uniformly scaling measures}
A measure $\mu \in \MM$ \emph{generates} a distribution $P^\square \in \PP(\MM^\square)$ at $x \in \R^2$ if
\begin{equation*}
  \TD(\mu,x) = \{ P^\square \}.
\end{equation*}
If $\mu$ generates $P^\square$ for $\mu$-almost all $x \in \R^2$, then $\mu$ is a \emph{uniformly scaling measure}. Note that, by Theorem \ref{thm:theorem11}, generated distributions are restricted fractal distributions and, by Theorem \ref{thm:EFD-generated}, restricted ergodic fractal distributions are generated by uniformly scaling measures. The following theorem shows that any restricted fractal distribution is generated by a uniform scaling measure.

\begin{theorem} \label{thm:theorem22}
  If $P^\square$ is a restricted fractal distribution on $\MM^\square$, then there exists a uniform scaling measure $\mu \in \MM$ generating $P^\square$.
\end{theorem}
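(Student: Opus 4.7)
The plan is to reduce to the ergodic case via the CP-distribution correspondence and then build $\mu$ by a splicing construction analogous to the one used in the proof of Theorem \ref{thm:theorem21}. First I would invoke Theorem \ref{thm:FDs-are-CPs} to obtain a CP-distribution $Q$ with $\cent\hat Q = P$, and use Theorem \ref{thm:theorem10} to decompose $P = \int P_\omega \dd P(\omega)$ into ergodic fractal components, with a parallel decomposition $Q = \int Q_\omega\,\dd Q(\omega)$ into ergodic CP-distributions satisfying $\cent\hat Q_\omega = P_\omega$. For each ergodic component, Theorem \ref{thm:EFD-generated} supplies a uniformly scaling measure $\nu_\omega$ that generates $P_\omega^\square$. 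The problem therefore reduces to assembling the $\nu_\omega$ into a single measure whose scenery averages reproduce the mixture.

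Next I would perform the assembly by splicing at dyadic scales. Approximate $P^\square$ by rational convex combinations $\tfrac{1}{q_N}\sum_{i=1}^{k_N} t_i^{(N)} P_i^\square$ of ergodic fractal distributions, which is possible by Theorem \ref{thm:theorem21}, and fix generators $\nu_i$ for the $P_i^\square$. Following the splicing template from the proof of Theorem \ref{thm:theorem21}, pick a rapidly increasing block-length sequence $\mathbf{n} = (n_k)_{k=1}^\infty$ and a label sequence $\omega(k)$ whose empirical frequencies over the first $N$ blocks realize the weights $t_i^{(N)}/q_N$ and then stabilize to $P^\square$ as $N \to \infty$. Build $\mu$ inductively: inside each level-$(n_1+\cdots+n_{k-1})$ dyadic cube $D$, place a homothetic copy of $\nu_{\omega(k)}$ rescaled to $D$ and carrying the mass that $\mu$ already assigns to $D$ from the previous stage.

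The verification amounts to checking that for $\mu$-almost every $x$ and for $t$ inside the $k$-th block of scales, $\mu_{x,t}^\square$ agrees up to a small boundary error with the analogous scene of $\nu_{\omega(k)}$. Since $\nu_{\omega(k)}$ generates $P_{\omega(k)}^\square$, the time-average of $\delta_{\mu_{x,t}^\square}$ over the $k$-th block converges to $P_{\omega(k)}^\square$ once $n_k$ is large enough. Summing across blocks, the Cesàro averages $\la\mu\ra_{x,T}^\square$ then converge to $\sum_i (\text{frequency of }i)\cdot P_i^\square = P^\square$. The quasi-Palm property of each $P_\omega$ is what guarantees that the set of good base points $x$ inside each sub-copy of $\nu_\omega$ has full measure, so that the generation statement holds $\mu$-almost everywhere rather than at just a specially chosen point.

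\emph{Main obstacle.} I expect the delicate step to be the diagonal choice of $(n_k)$: the $n_k$ must be large enough that within block $k$ the Birkhoff-type average has time to approach $P_{\omega(k)}^\square$, yet not so large that a single block dominates the total scenery time and prevents the frequencies from stabilizing to the weights defining $P^\square$. One also needs to absorb the boundary effects arising from discontinuity of $S_t^\square$ at scales where $e^{-t}$ crosses between dyadic levels and from the nested boundaries $\Delta$ introduced at each splicing step; the Lebesgue-intensity machinery from Lemma \ref{thm:trans-leb-intensity}, possibly after an adjustment of the dyadic grid, should be enough to make both error terms negligible.
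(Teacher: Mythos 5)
Your overall plan -- reduce to the ergodic case via Theorem \ref{thm:FDs-are-CPs} and Theorem \ref{thm:EFD-generated}, use the Poulsen property (Theorem \ref{thm:theorem21}) to approximate $P^\square$ by finitely many ergodic pieces, and assemble a measure by splicing generators -- shares the paper's toolkit, but the paper takes a cleaner route that you should compare against. Rather than trying to realize $P^\square$ directly as a frequency-weighted splice of generators of its ergodic components, the paper observes that it is enough (by Theorems \ref{thm:theorem20} and \ref{thm:theorem21}) to show that the set of restricted fractal distributions generated by some uniformly scaling measure is \emph{closed}; since ergodic distributions are dense and already handled by Theorem \ref{thm:EFD-generated}, one only needs to take an arbitrary sequence of \emph{ergodic} distributions $P_i^\square \to P^\square$, pass to ergodic CP-distributions $Q_i \to Q$, let $\mu_i$ generate $Q_i$, and splice the $\mu_i$ with super-exponentially growing block lengths. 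The key point is that each block $i$ carries a \emph{different} measure $\mu_i$, with $Q_i \to Q$, so convergence of the Cesàro averages comes from the domination of the current block together with $Q_i \to Q$; there is no need for any frequency bookkeeping.

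This matters because the frequency-based scheme you propose has a genuine tension you should be aware of: to make the within-block empirical distributions settle to the target you want $n_k$ to grow very fast, but once $n_k$ grows super-exponentially the cumulative average at a scale inside block $k$ is dominated by block $k$ alone, so the empirical distribution of labels does not track your intended weights $t_i/q$ -- it collapses onto whichever single ergodic piece the current block carries. Conversely, with bounded block lengths and fixed frequencies (as in the Theorem \ref{thm:theorem21} construction) one obtains a single \emph{ergodic} CP-distribution approximating the convex combination, not the combination itself, which would not give a uniformly scaling measure generating the non-ergodic $P^\square$. You correctly flag the choice of $(n_k)$ as the delicate step, but the resolution is not just a diagonal argument: it is the change of target from ``realize the mixture via frequencies'' to ``approximate by a varying sequence of ergodic generators,'' which is precisely the reformulation the paper uses. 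Two smaller points: (i) you do not actually use the decomposition $Q = \int Q_\omega\,\dd Q(\omega)$ you set up, and justifying that $\cent$ intertwines the two ergodic decompositions would require extra work; (ii) the paper's final step of passing from the spliced $\mu'$ to a genuine uniformly scaling measure restricts to a positive-measure good set $U$ and invokes the Besicovitch density theorem to preserve the micromeasure distributions, rather than the quasi-Palm property -- the quasi-Palm property has already been used inside Theorem \ref{thm:EFD-generated}.
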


\begin{proof}
  If $P^\square$ is a restricted ergodic fractal distribution, then, by Theorem \ref{thm:EFD-generated}, $P^\square$-almost every measure is an uniformly scaling measure generating $P^\square$. By Theorems \ref{thm:theorem20} and \ref{thm:theorem21}, it then suffices to show that the set of all restricted fractal distributions $P^\square$ for which there exists a uniformly scaling measure generating $P^\square$ is closed. Let $(P^\square_i)_{i = 1}^\infty$ be a sequence of restricted ergodic fractal distributions converging to $P^\square$. Our goal is to find a uniformly scaling measure $\mu$ generating $P^\square$.
  
  Observe that, by Theorem \ref{thm:FDs-are-CPs}, there exist a CP-distribution $Q$ and ergodic CP-distributions $Q_i$ such that $(\cent\hat Q)^\square = P^\square$, $(\cent\hat Q_i)^\square = P_i^\square$ for all $i \in \N$, and $Q_i \to Q$ as $i \to \infty$. It follows from the proof of Lemma \ref{thm:lemma18} that for each $i \in \N$ there exists $\mu_i$ generating $Q_i$ for $\mu_i$-almost all $x$. If we can now show that there exists a measure $\mu$ which generates $Q$ at $\mu$-almost all $x \in \R^2$, then the proof of Theorem \ref{thm:theorem19} guarantees that $\mu$ also generates $(\cent\hat Q)^\square$ at $\mu$-almost all $x \in \R^2$. This will finish the proof. Note that the only role of the random translation used in the proof is to guarantee that the micromeasure distribution is a CP-distribution. To obtain such a measure $\mu$, we splice the measures $\mu_i$ together.
  
  Fix $0<\eps<1$ and choose $\eps_i \downarrow 0$ such that
  \begin{equation*}
    \prod_{i=1}^\infty (1-\eps_i) = \eps.
  \end{equation*}
  Since $\mu_i$ generates $Q_i$, there exists $m_i \in \N$ such that $\mu_i(U_i) > 1-\eps_i$, where
  \begin{equation*}
    U_i = \{ x \in [-1,1]^2 : d(\overline{\la \mu_i,x \ra}_N, \overline{Q}_i) < \eps_i \text{ for all } N \ge m_i \}
  \end{equation*}
  and $d$ is a metric giving the weak topology. Let $n_1 = \max\{ e^{m_1}, e^{m_2} \}$ and
  \begin{equation*}
    n_i = m_i + \max\{ e^{n_{i-1}}, e^{m_i}, e^{m_{i+1}} \}
  \end{equation*}
  for all $i \ge 2$. Recalling the splicing operation $\spl_{\mathbf{n}}$ introduced in the proof of Theorem \ref{thm:theorem21}, we define
  \begin{equation*}
    \mu' = \spl_{\mathbf{n}}\times_{i=1}^\infty\mu_i \quad \text{and} \quad
    U = \spl_{\mathbf{n}}(\times_{i=1}^\infty U_i).
  \end{equation*}
  Since
  \begin{equation*}
    \mu'(U) = \lim_{M \to \infty} \prod_{i=1}^M \mu_i(U_i) \ge \lim_{M \to \infty} \prod_{i=1}^M (1-\eps_i) = \eps > 0,
  \end{equation*}
  the normalized measure $\mu=\mu_U'$ is well-defined. It follows from the Besicovitch density theorem (see e.g.\ \cite[Theorem 2.14(1)]{Mattila1995}) that
  \begin{equation*}
    \MD(\mu,x) = \MD(\mu',x)
  \end{equation*}
  for $\mu'$-almost all $x \in U$. Thus, by showing that $\overline{\la \mu',x \ra}_N \to \overline{Q}$ as $N \to \infty$ for $\mu'$-almost all $x \in U$, we have finished the proof since then $\mu$ is the uniformly scaling measure we are seeking for. The proof of this fact is technical and the interested reader can find it at \cite[Lemma 5.5]{KaenmakiSahlstenShmerkin2015a}.
\end{proof}

\section{Conical densities via scenery flow} \label{sec:conical-scenery}

With the dynamical machinery built around the scenery flow, we are now ready to study conical densities. The presentation in this section follows \cite{Hochman2010, KaenmakiSahlstenShmerkin2015b}.

In \S \ref{sec:conical-densities}, we studied conical density results by using the Euclidean norm but in \S \ref{sec:scenery-flow}, we used $L^\infty$-norm. There is nothing special about the $L^\infty$-norm besides geometrically easy correspondence between CP-distributions and fractal distributions. All the results concerning fractal and tangent distributions remain the same regardless of the choice of the norm. Changing the norm for the extended fractal distribution only changes the normalization. The restricted fractal distributions basically only require the one-to-one correspondence between the extended and restricted versions. Therefore, we continue working with the Euclidean norm.

\subsection{Fractal distributions and dimension}
We shall first study how fractal distributions are related to dimension. The \emph{lower local dimension} of $\mu \in \MM$ at $x \in \R^2$ is defined by
\begin{equation} \label{eq:lower-local}
  \ldimloc(\mu,x) = \liminf_{r \downarrow 0} \frac{\log \mu(B(x,r))}{\log r}
\end{equation}
and the \emph{lower Hausdorff dimension} of $\mu$ is
\begin{equation*}
  \ldimh(\mu) = \essinf_{x \sim \mu} \ldimloc(\mu,x).
\end{equation*}
A measure is \emph{exact-dimensional} if the limit in \eqref{eq:lower-local} exists and is $\mu$-almost everywhere constant. In this case, the limit in \eqref{eq:lower-local} is denoted by $\dimloc(\mu,x)$ and the almost sure common value by $\dim(\mu)$.

\begin{lemma}
  If $\mu \in \MM$, then
  \begin{equation*}
    \ldimh(\mu) = \inf\{ \dimh(A) : A \subset \R^2 \text{ is a Borel set with } \mu(A)>0 \}.
  \end{equation*}
\end{lemma}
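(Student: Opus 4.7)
Write $d = \ldimh(\mu)$ and $d' = \inf\{\dimh(A) : A \subset \R^2 \text{ Borel with } \mu(A) > 0\}$. The plan is to prove the two inequalities $d \le d'$ and $d \ge d'$ separately, using ideas parallel to the two halves of the proof of Theorem~\ref{thm:besicovitch}.

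For $d \le d'$, I would fix $s < d$ and show $\dimh(A) \ge s$ for every Borel set $A$ with $\mu(A) > 0$. By definition of the essential infimum, the set $E = \{x : \ldimloc(\mu,x) > s\}$ is $\mu$-conull, and writing $E_k = \{x \in E : \mu(B(x,r)) \le r^s \text{ for all } 0 < r < 1/k\}$ gives $E = \bigcup_{k \in \N} E_k$. Hence some $A_k := A \cap E_k$ satisfies $\mu(A_k) > 0$. For any $\delta < 1/k$ and any $\delta$-cover $\{U_i\}$ of $A_k$ for which $A_k \cap U_i \ne \emptyset$, choosing $x_i \in A_k \cap U_i$ yields
\begin{equation*}
  \mu(A_k) \le \sum_i \mu(A_k \cap U_i) \le \sum_i \mu(B(x_i,\diam U_i)) \le \sum_i \diam(U_i)^s,
\end{equation*}
so $\HH^s_\delta(A_k) \ge \mu(A_k) > 0$, and hence $\dimh(A) \ge \dimh(A_k) \ge s$. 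As $s < d$ was arbitrary, $d' \ge d$.

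For $d \ge d'$, I would fix $s > d$ and produce a Borel set $A$ with $\mu(A) > 0$ and $\dimh(A) \le s$. By definition of $d$, the set $A_0 = \{x : \ldimloc(\mu,x) < s\}$ satisfies $\mu(A_0) > 0$, and for each $x \in A_0$ there exist arbitrarily small radii $r$ with $\mu(B(x,r)) > r^s$. Since $\mu$ is Radon, we may intersect with a closed ball $B(0,R)$ to assume $A \subset B(0,R)$ with $0 < \mu(A) < \infty$. Given $\delta > 0$, for each $x \in A$ pick $r_x \in (0,\delta/10)$ with $\mu(B(x,r_x)) > r_x^s$, and apply the $5r$-covering theorem (see e.g.\ \cite[Theorem 2.1]{Mattila1995}) to extract a countable disjoint subfamily $\{B(x_i,r_i)\}$ with $A \subset \bigcup_i B(x_i,5r_i)$. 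Since the balls $B(x_i,r_i)$ are disjoint and contained in the bounded set $B(0,R+1)$, we get
\begin{equation*}
  \HH^s_\delta(A) \le \sum_i (10 r_i)^s < 10^s \sum_i \mu(B(x_i,r_i)) \le 10^s \mu(B(0,R+1)) < \infty,
\end{equation*}
so $\HH^s(A) < \infty$ and $\dimh(A) \le s$. As this holds for every $s > d$, we obtain $d' \le d$.

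The first inequality is a standard mass distribution argument, where the only subtlety is the exhaustion $E = \bigcup_k E_k$ used to make the radius threshold uniform on a set of positive $\mu$-measure. The main obstacle is the second inequality: the radii along which $\mu(B(x,r)) > r^s$ holds depend on $x$ and are not bounded below, so one cannot directly bound $\HH^s_\delta$ by a pointwise estimate. The $5r$-covering theorem is the natural remedy because it only requires the cover to consist of balls of uniformly bounded radius, which I arrange by enforcing $r_x < \delta/10$.
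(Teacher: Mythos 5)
Your proof is correct. The first inequality is essentially identical to the paper's argument, with the exhaustion $E = \bigcup_k E_k$ playing the role of the paper's sets $F_\delta$. For the second inequality, you take the same route as the paper ($5r$-covering theorem applied to small balls where $\mu(B(x,r)) > r^s$), but with one genuine simplification: the paper first restricts the measure to the set $C$ and invokes the Besicovitch density theorem to obtain $\mu|_C(B(x,r_i)) > r_i^s/2$, then bounds $\sum_i \mu|_C(B_i) \le \mu(C)$ by disjointness; you instead work with $\mu$ itself and bound $\sum_i \mu(B(x_i,r_i)) \le \mu(B(0,R+1))$ by disjointness plus the fact that all the balls sit inside a fixed bounded ball (which has finite $\mu$-measure since $\mu$ is Radon). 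This eliminates the appeal to the Besicovitch density theorem entirely. The direct (non-contradiction) formulation is also cleaner, though that is purely cosmetic. One tiny point: in your definition of $E_k$ you could use strict inequality $\mu(B(x,r)) < r^s$, which is what $\ldimloc(\mu,x) > s$ actually gives, but the non-strict version you wrote is of course also valid and changes nothing.
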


\begin{proof}
  Let $s < \ldimh(\mu)$. This means that for $\mu$-almost every $x \in \R^2$ there exists $r_x>0$ such that $\mu(B(x,r)) < r^s$ for all $0<r<r_x$. Let $F_\delta = \{ x \in \R^2 : \mu(B(x,r)) < r^s \text{ for all } 0<r<\delta \}$. Fix a Borel set $A \subset \R^2$ such that $\mu(A)>0$ and choose $\delta>0$ so small that $\mu(A \cap F_\delta)>0$. Let $\{ U_i \}_i$ be a $\delta$-cover of $A \cap F_\delta$. For each $i$ fix $x_i \in U_i \cap A \cap F_\delta$. Then
  \begin{equation*}
    \mu(U_i) \le \mu(B(x_i,\diam(U_i))) < \diam(U_i)^s
  \end{equation*}
  and
  \begin{equation*}
    \mu(A \cap F_\delta) \le \sum_i \mu(U_i) \le \sum_i \diam(U_i)^s.
  \end{equation*}
  Therefore, $\mu(A \cap F_\delta) \le \HH^s_\delta(A \cap F_\delta) \le \HH^s(A)$. Since $\bigcup_{\delta > 0} A \cap F_\delta = A$, we get $0 < \mu(A) \le \HH^s(A)$ and $\dimh(A) \ge s$.
  
  To show the other direction, assume to the contrary that $\ldimh(\mu)$ is strictly smaller than the claimed infimum. In other words, there exist a bounded Borel set $C \subset \R^2$ with $0<\mu(C)<\infty$ and $s>0$ such that
  \begin{equation} \label{eq:dim-antithesis}
    \ldimloc(\mu,x) < s < \inf\{ \dimh(A) : A \subset \R^2 \text{ is a Borel set with } \mu(A)>0 \}
  \end{equation}
  for all $x \in C$. By the Besicovitch density theorem (see e.g.\ \cite[Theorem 2.14(1)]{Mattila1995}), for every $x \in C$ there exists a sequence $r_i \downarrow 0$ such that $\mu|_C(B(x,r_i)) > r_i^s/2$. Fix $\delta>0$ and let
  \begin{equation*}
    \CC = \{ B(x,r) : x \in C,\; 0<r\le\delta, \text{ and } \mu|_C(B(x,r)) > r^s/2 \}.
  \end{equation*}
  Note that $C \subset \bigcup_{B \in \CC} B$ regardless of the choice of $\delta$. By the $5r$-covering theorem (see e.g.\ \cite[Theorem 2.1]{Mattila1995}), there exists a countable collection of mutually disjoint balls $B_i \in \CC$ such that $\bigcup_{B \in \CC} B = \bigcup_i 5B_i$. Thus $\{ 5B_i \}_i$ is a $10\delta$-cover of $C$ and
  \begin{align*}
    \HH^s_{10\delta}(C) &\le \sum_i \diam(5B_i)^s = 10^s\sum_i \rad(B_i)^s \\
    &\le 2 \cdot 10^s \sum_i \mu|_C(B_i) \le 2 \cdot 10^s\mu(C) < \infty,
  \end{align*}
  where $\rad(B)$ is the radius of the ball $B$. It follows that $\dimh(C) \le s$, a contradiction with \eqref{eq:dim-antithesis}.
\end{proof}

\begin{lemma} \label{thm:FD-dim}
  If $P^\square$ is a restricted fractal distribution on $\MM^\square$, then $P^\square$-almost every measure is exact-dimensional. Furthermore, if $P^\square$ is ergodic, then the value of the dimension is $P^\square$-almost everywhere constant and given by
  \begin{equation*}
    \int_{\MM^\square} \frac{\log \mu(B(0,r))}{\log r} \dd P^\square(\mu)
  \end{equation*}
  for all $0<r<1$.
\end{lemma}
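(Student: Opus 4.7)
The plan is to reduce to the ergodic case and then exhibit $t \mapsto \log\mu(B(x,e^{-t}))$ as an additive cocycle over the scenery flow, so that the Birkhoff ergodic theorem together with the quasi-Palm property yields pointwise convergence of $\log\mu(B(x,r))/\log r$. By Theorem \ref{thm:theorem10}, $P^\square$-almost every ergodic component of $P^\square$ is itself an ergodic fractal distribution, so it suffices to prove both assertions when $P^\square$ is ergodic; the non-ergodic case follows by integrating the ergodic statement over the ergodic decomposition.

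Assume $P^\square$ is ergodic, fix $r \in (0,1)$, set $\tau = -\log r > 0$, and define $f_r \colon \MM^\square \to [-\infty,0]$ by $f_r(\nu) = \log\nu(B(0,r))$. Writing $h(t) = \log\mu(B(x,e^{-t}))$ for $\mu \in \MM$ and $x \in \spt(\mu)$, the scaling relation $\mu_{x,t}^\square(B(0,r)) = \mu(B(x,e^{-t}r))/\mu(B(x,e^{-t}))$ yields the key cocycle identity
\[
  f_r(\mu_{x,t}^\square) = h(t+\tau) - h(t).
\]
The Birkhoff ergodic theorem for the flow $(\MM^\square,P^\square,S^\square)$ applied to $f_r$ gives, for $P^\square$-almost every $\nu$, $\tfrac{1}{T}\int_0^T f_r(S^\square_s\nu)\dd s \to c_r := \int f_r\dd P^\square$ as $T \to \infty$. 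By the quasi-Palm property, this convergence also holds for $\nu = T_x^\square\mu$ for $P^\square$-almost every $\mu$ and $\mu$-almost every $x$. Telescoping via the cocycle identity,
\[
  \int_0^T f_r(\mu_{x,s}^\square)\dd s = \int_T^{T+\tau}h(s)\dd s - \int_0^\tau h(s)\dd s,
\]
so $\tfrac{1}{T}\int_T^{T+\tau} h(s)\dd s \to c_r$ as $T \to \infty$.

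To convert this window average into pointwise convergence of $h(T)/T$, I use that $h$ is non-increasing (since $\mu(B(x,\cdot))$ is non-decreasing). Monotonicity gives $\tau h(T+\tau) \le \int_T^{T+\tau}h(s)\dd s \le \tau h(T)$; dividing by $T$ and using $(T+\tau)/T \to 1$, set $a = \liminf_T h(T)/T$ and $b = \limsup_T h(T)/T$, and observe $\limsup_T h(T+\tau)/T = b$ and $\liminf_T h(T+\tau)/T = a$. Taking $\limsup$ in the left inequality yields $\tau b \le c_r$, and taking $\liminf$ in the right yields $c_r \le \tau a$. Combined with $a \le b$, this forces $a = b = c_r/\tau$, so $h(T)/T \to c_r/\tau$. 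Therefore $\dimloc(\mu,x) = -c_r/\tau = \int \log\nu(B(0,r))/\log r\,\dd P^\square(\nu)$, giving the claimed formula; its $r$-independence is automatic from that of $\dimloc(\mu,x)$.

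The main technical obstacle is justifying $f_r \in L^1(P^\square)$, required to apply Birkhoff: while $f_r \le 0$, it can take arbitrarily negative values or even $-\infty$, so one needs a quantitative bound on $P^\square(\{\nu : \nu(B(0,r)) < \varepsilon\})$ as $\varepsilon \downarrow 0$, obtained from the quasi-Palm property by a standard estimate. The discontinuity of $S^\square$ at measures charging sphere boundaries poses no difficulty, since Birkhoff requires only measurability.
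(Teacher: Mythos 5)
Your proof is correct and follows essentially the same route as the paper: reduce to the ergodic case, apply the Birkhoff ergodic theorem to the cocycle $\nu \mapsto \log\nu(B(0,r))$ along the scenery flow (the paper uses the discrete-time map $S^*_{\log r^{-1}}$ where you use the continuous flow, but the telescoping identity is the same), and transfer the resulting $P^\square$-a.e.\ statement to a $\mu$-a.e.\ statement via the quasi-Palm property. The main thing you add is the explicit monotonicity argument passing from convergence of window averages of $h(T)/T$ to convergence of $h(T)/T$ itself, and the remark about $L^1$-integrability of $f_r$; both of these steps are left implicit in the paper.
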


\begin{proof}
  Let $P$ be an extended ergodic fractal distribution and $0<r<1$. Set
  \begin{equation*}
    F(\mu) = \frac{\log\mu(B(0,r))}{\log r}
  \end{equation*}
  and notice that
  \begin{equation} \label{eq:MH-30}
  \begin{split}
    \frac{\log\mu(B(0,r^N))}{\log r^N} &= \frac{1}{\log r^N} \sum_{n=1}^N \log\frac{\mu(B(0,r^n))}{\mu(B(0,r^{n-1}))} \\
    &= \frac{1}{N} \sum_{n=1}^N \frac{\log S^*_{n\log r^{-1}}\mu(B(0,r))}{\log r} \\
    &= \frac{1}{N} \sum_{n=1}^N F((S^*_{\log r^{-1}})^n \mu).
  \end{split}
  \end{equation}
  By letting $N \to \infty$, the Birkhoff ergodic theorem implies that
  \begin{equation*}
    \dimloc(\mu,0) = \int_{\MM^*} F(\nu) \dd P(\nu)
  \end{equation*}
  for $P$-almost all $\mu \in \MM^*$. Finally, by the quasi-Palm property,
  \begin{equation*}
    \dimloc(\mu,x) = \int_{\MM^*} F(\nu) \dd P(\nu)
  \end{equation*}
  for $P$-almost all $\mu \in \MM^*$ and for $\mu$-almost all $x \in \R^2$.
\end{proof}

The \emph{dimension of a restricted fractal distribution} $P^\square$ is defined by
\begin{equation*}
  \dim(P^\square) = \int_{\MM^\square} \dim(\mu) \dd P^\square(\mu).
\end{equation*}
Note that, by the ergodic decomposition and Lemma \ref{thm:FD-dim},
\begin{equation*}
  \dim(P^\square) = \int_{\MM^\square} \frac{\log\mu(B(0,r))}{\log r} \dd P^\square(\mu)
\end{equation*}
for all $0<r<1$.

\begin{lemma} \label{thm:FD-dim-cont}
  The function $P^\square \mapsto \dim(P^\square)$ defined on the set of all restricted fractal distributions is continuous.
\end{lemma}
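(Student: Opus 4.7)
The plan is to use the representation from Lemma~\ref{thm:FD-dim}: for any $0 < r < 1$ and any restricted fractal distribution $P^\square$,
\[
\dim(P^\square) = \int_{\MM^\square} \frac{-\log \mu(B(0, r))}{-\log r} \dd P^\square(\mu),
\]
so continuity reduces to analyzing how this integral depends on $P^\square$. I would prove it by separately establishing lower and upper semicontinuity.

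For lower semicontinuity, since $B(0, r)$ is closed, the functional $\mu \mapsto \mu(B(0, r))$ is upper semicontinuous for the weak topology, so $\mu \mapsto -\log \mu(B(0, r))$ is non-negative and lower semicontinuous. Applying the standard Fatou-type inequality for non-negative l.s.c.\ functions under weak convergence $P_n^\square \to P^\square$ immediately yields $\dim(P^\square) \le \liminf_n \dim(P_n^\square)$.

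For upper semicontinuity, I would instead work with the open ball $\inter B(0, r)$. Since every measure charges only countably many concentric spheres, Fubini gives that for all but countably many $r$ the equality $\mu(\partial B(0, r)) = 0$ holds simultaneously for $P^\square$- and $P_n^\square$-almost every $\mu$ for all $n$; the formula then remains valid with $B(0,r)$ replaced by $\inter B(0, r)$. The map $\mu \mapsto -\log \mu(\inter B(0, r))$ is upper semicontinuous, and it is $P_n^\square$- and $P^\square$-almost surely finite because $0 \in \spt(\mu)$ on the support of any fractal distribution by the quasi-Palm property. Truncating at level $M$ yields a bounded u.s.c.\ function to which Portmanteau applies; monotone convergence as $M \to \infty$ then reduces the task to establishing uniform integrability of $\mu \mapsto -\log \mu(\inter B(0, r))$ over the family $\{P_n^\square\}$.

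The main obstacle is this uniform integrability. The first moment is uniformly bounded by $-2 \log r$ because $\dim(P_n^\square) \le 2$, but this yields only tightness. The plan is to invoke the telescoping identity from the proof of Lemma~\ref{thm:FD-dim}, which writes
\[
-\log \mu(B(0, r^k)) = \sum_{j=0}^{k-1} -\log (S^*_{j \log r^{-1}} \mu)(B(0, r))
\]
as a sum of identically distributed terms under the $S^*$-invariance of the extended version of $P^\square$. Combined with the exact-dimensionality of $P^\square$-almost every $\mu$, this forces the distribution of each summand to concentrate around $-\log r \cdot \dim(P^\square)$; coupled with the compactness of the set of restricted fractal distributions (Theorem~\ref{thm:theorem20}), it should provide the uniform tail control required across the entire sequence. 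Combining this uniform integrability argument with the two semicontinuity estimates then gives continuity of $P^\square \mapsto \dim(P^\square)$.
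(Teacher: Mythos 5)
Your split into lower and upper semicontinuity is a legitimate route, and the lower half is correct: since $B(0,r)$ is closed, $\mu\mapsto-\log\mu(B(0,r))$ is non-negative and lower semicontinuous, and the Portmanteau inequality gives $\dim(P^\square)\le\liminf_n\dim(P^\square_n)$. The paper proceeds differently: rather than treating the two semicontinuities separately, it replaces the fixed-scale observable $F(\mu)=\log\mu(B(0,r))/\log r$ by its average $\hat F(\mu)=\int_0^{-\log r}F(S^*_t\mu)\,\dd t$ over a window of scales, and uses the fact that a measure can charge $\partial B(0,s)$ for at most countably many $s$, together with dominated convergence, to show that $\hat F$ is genuinely continuous at every $\mu$ with $0\in\spt(\mu)$; the identity $\dim(P^\square)=\int\hat F\,\dd P^\square$ then follows from Fubini and $S^*$-invariance. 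The averaging produces one continuous test function in place of your pair of semicontinuous ones. Both formulations, however, ultimately have to integrate an unbounded function against weakly convergent distributions, which is why you are right to single out uniform integrability as the crux.

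That is precisely where your proposal has a genuine gap. The route you sketch for uniform integrability does not work. Exact-dimensionality together with the telescoping identity controls only the Ces\`aro averages $\frac1k\sum_{j<k}-\log(S^*_{j\log r^{-1}}\mu)(B(0,r))$ as $k\to\infty$; it places no constraint on the tail of the single summand $-\log\mu(B(0,r))$. A stationary sequence of non-negative identically distributed random variables can have an arbitrarily heavy-tailed marginal while its Birkhoff averages still converge almost surely to a bounded limit, so nothing forces the law of the individual scale-$r$ observable to concentrate near $-\log r\cdot\dim(P^\square)$. Appealing to compactness of the simplex of restricted fractal distributions (Theorem~\ref{thm:theorem20}) is circular here: that is exactly the set on which you are trying to prove $\dim$ is continuous, so compactness alone cannot supply the tail control without prior input about $\dim$. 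As written, the step you flag as ``the main obstacle'' is left unresolved and the proof is incomplete there. To close it you would need either a direct uniform tail estimate for $-\log\mu(B(0,r))$ over fractal distributions, or a reformulation through CP-distributions via Theorem~\ref{thm:FDs-are-CPs}, exploiting upper semicontinuity of entropy with respect to the dyadic partition in the spirit of Lemma~\ref{lem:onlydim}.
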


\begin{proof}
  A given measure can be a discontinuity point for the function $\mu \mapsto \mu(B(0,r))$ for at most countably many $0<r<1$. Hence, by the dominated convergence theorem, the function
  \begin{equation*}
    \hat F(\mu) = \int_0^{-\log r} F(S_t^*\mu) \dd t
  \end{equation*}
  is continuous at every $\mu \in \MM^\square$ with $0 \in \spt(\mu)$. By Fubini's theorem, $\dim(P^\square) = \int_{\MM^\square} \hat F(\mu) \dd P^\square(\mu)$.
\end{proof}

Recall that for a given measure $\mu \in \MM$, by Theorem \ref{thm:theorem11}, a tangent distribution $P \in \TD(\mu,x)$ is a restricted fractal distribution for $\mu$-almost all $x \in \R^2$.

\begin{theorem} \label{thm:FD-dimloc}
  If $\mu \in \MM$, then
  \begin{equation*}
    \ldimloc(\mu,x) = \inf\{ \dim(P) : P \in \TD(\mu,x) \}
  \end{equation*}
  for $\mu$-almost all $x \in \R^2$. In particular, if $\mu$ is a uniformly scaling measure generating $P$, then $\mu$ is exact-dimensional and $\dim(\mu)=\dim(P)$.
\end{theorem}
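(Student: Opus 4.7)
The plan is to compare the mass decay of $\mu$ around $x$ with a time-averaged observable on the scenery flow. Let $G(T) = -\log\mu(B(x,e^{-T}))$, which is non-decreasing in $T$ since $r \mapsto \mu(B(x,r))$ is non-decreasing; then $\ldimloc(\mu,x) = \liminf_{T\to\infty} G(T)/T$. For any $c > 0$ and $r = e^{-c}$, the scaling identity $\mu_{x,t}^\square(B(0,r)) = \mu(B(x,re^{-t}))/\mu(B(x,e^{-t}))$ gives
\begin{equation*}
  F(\mu_{x,t}^\square) := \frac{\log\mu_{x,t}^\square(B(0,r))}{\log r} = \frac{G(t+c) - G(t)}{c},
\end{equation*}
and integration telescopes to
\begin{equation*}
  \frac{1}{T}\int_0^T F(\mu_{x,t}^\square)\,\dd t = \frac{\bar G(T)}{T} - \frac{1}{Tc}\int_0^c G(s)\,\dd s, \qquad \bar G(T) = \frac{1}{c}\int_T^{T+c} G(s)\,\dd s.
\end{equation*}

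Suppose $\la\mu\ra_{x,T_n}^\square \to P \in \TD(\mu,x)$. For all but countably many $c$ the function $F$ is continuous at $P$-almost every measure (by Fubini, since each measure charges only countably many spheres about the origin), so weak convergence combined with Lemma \ref{thm:FD-dim} gives $\dim(P) = \int F\,\dd P = \lim_n \bar G(T_n)/T_n$. The monotonicity bound $G(T) \le \bar G(T)$ then yields one direction, $\ldimloc(\mu,x) \le \liminf_n G(T_n)/T_n \le \lim_n \bar G(T_n)/T_n = \dim(P)$. For the matching lower bound I would first show $\liminf_T \bar G(T)/T = \ldimloc(\mu,x)$: the inequality $\ge$ is immediate from $G \le \bar G$, while $\le$ follows by choosing $S_n$ with $G(S_n)/S_n \to \ldimloc$ and noting $\bar G(S_n - c) \le G(S_n)$, so $\bar G(S_n-c)/(S_n-c) \to \ldimloc$. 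Then, using the compactness of $\PP(\MM^\square)$, I would extract a subsequence $T_n$ along which both $\bar G(T_n)/T_n \to \ldimloc(\mu,x)$ and $\la\mu\ra_{x,T_n}^\square$ converges to some $P \in \TD(\mu,x)$, delivering a tangent distribution with $\dim(P) = \ldimloc(\mu,x)$.

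For the uniformly scaling case, $\la\mu\ra_{x,T}^\square \to P$ along the full net, so $\bar G(T)/T \to \dim(P)$ by the identity above. Combined with $G(T) \le \bar G(T)$ this gives $\limsup_T G(T)/T \le \dim(P)$, and together with $\ldimloc(\mu,x) = \dim(P)$ from the first part of the theorem it forces $\lim_T G(T)/T = \dim(P)$, establishing exact-dimensionality with $\dim(\mu) = \dim(P)$. The main technical obstacle is managing the fact that $F$ is discontinuous on $\MM^\square$ precisely at measures charging $\partial B(0,r)$; this is handled by the generic choice of $c$ described above, or alternatively by replacing $F$ with the time-averaged observable $\hat F$ from Lemma \ref{thm:FD-dim-cont}, which is continuous and lets one pass to weak limits without further care.
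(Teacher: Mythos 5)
Your argument is correct and follows essentially the same route as the paper: both proofs derive a telescoping identity expressing the Birkhoff average of the log-mass observable along the scenery flow in terms of averaged ratios $\bar G(T)/T$ (your $\bar G$-identity is the integral form of the paper's sum \eqref{eq:MH-30} with $r=1/e$), and both then pass to weak limits, handling the discontinuity of $F$ either by a generic choice of the scale parameter or by replacing $F$ with the scale-averaged $\hat F$ of Lemma \ref{thm:FD-dim-cont}, an alternative you explicitly note. Your treatment of the ``infimum is achieved'' direction via $\bar G(S_n-c)\le G(S_n)$ is a slightly more explicit version of the paper's terse ``choose $P$ to be an accumulation point for the sequence that realizes $\ldimloc(\mu,x)$,'' and the only small slip is that ``all but countably many $c$'' should read ``Lebesgue almost every $c$.''
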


\begin{proof}
  Let $\mu \in \MM$ and choose $x \sim \mu$. By Theorem \ref{thm:theorem11}, every member of $\TD(\mu,x)$ is a restricted fractal distribution. Let $P \in \TD(\mu,x)$ and $\hat F$ be as in the proof of Lemma 4.3. Since $\hat F$ is continuous, we have
  \begin{equation*}
    \dim(P) = \int_{\MM^\square} \hat F(\mu) \dd P^\square(\mu) = \lim_{i \to \infty} \frac{1}{T_i} \int_0^{T_i} \hat F(\mu_{x,t}^\square) \dd t
  \end{equation*}
  for some sequence $(T_i)_{i \in \N}$. By choosing $r=1/e$, \eqref{eq:MH-30} implies that
  \begin{equation*}
    \frac{1}{N} \sum_{n=1}^N \hat F(\mu_{x,n}^\square) = \int_0^1 \frac{1}{N} \sum_{n=1}^N F(S_t^\square \mu_{x,n}) \dd t = \int_0^1 \frac{1}{N} \log\frac{\mu(B(x,e^{-N-t-1}))}{\mu(B(x,e^{-t-1}))} \dd t.
  \end{equation*}
  Note that the integrand above is bounded and its limit does not depend on $t$. Thus, by the dominated convergence theorem,
  \begin{equation*}
    \liminf_{N \to \infty} \int_0^1 \frac{1}{N} \log\frac{\mu(B(x,e^{-N-t-1}))}{\mu(B(x,e^{-t-1}))} \dd t = \ldimloc(\mu,x).
  \end{equation*}
  Since
  \begin{equation*}
    \frac{1}{N} \sum_{n=1}^N \hat F(\mu_{x,n}^\square) \quad \text{and} \quad
    \frac{1}{T} \int_0^T \hat F(\mu_{x,n}^\square) \dd t
  \end{equation*}
  have the same asymptotics, we have shown $\ldimloc(\mu,x) \le \dim(P)$ for all $P \in \TD(\mu,x)$. Choosing $P$ to be an accumulation point for the sequence that realizes $\ldimloc(\mu,x)$ finishes the proof.
\end{proof}

\begin{lemma} \label{lem:onlydim}
  If $P^\square$ is a restricted fractal distribution, then
  \begin{itemize}
    \item[(1)] $\dim(P^\square)=0$ if and only if $P^\square=\delta_{\delta_0}$,
    \item[(2)] $\dim(P^\square)=2$ if and only if $P^\square=\delta_{\LL}$,
  \end{itemize}
  where $\delta_x$ is the Dirac mass at $x$ and $\LL$ is the normalized Lebesgue measure.
\end{lemma}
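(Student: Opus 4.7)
The forward implications are routine computations: if $P^\square=\delta_{\delta_0}$, then $\delta_0(B(0,r))=1$ for every $r>0$, so Lemma~\ref{thm:FD-dim} gives $\dim(P^\square)=0$; if $P^\square=\delta_{\LL}$, then in the $L^\infty$-norm of Section~\ref{sec:scenery-flow} we have $\LL(B(0,r))=r^2$ for $0<r\le 1$, so the same lemma gives $\dim(P^\square)=2$.

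For the converse of (1), I apply Lemma~\ref{thm:FD-dim} (after ergodic decomposition if necessary) to conclude that $\int \log\mu(B(0,r))/\log r\,\dd P^\square(\mu)=0$ for every $0<r<1$. The integrand is pointwise nonnegative on $\MM^\square$, since $\mu\in\MM^\square$ forces $\mu(B(0,r))\le \mu([-1,1]^2)=1$ while $\log r<0$. Hence for each fixed $r$ the equality $\mu(B(0,r))=1$ holds $P^\square$-almost surely, and intersecting these full-measure events over a countable sequence $r_n\downarrow 0$ forces $\mu(\{0\})=1$ for $P^\square$-almost every $\mu$. So $\mu=\delta_0$ $P^\square$-almost surely and $P^\square=\delta_{\delta_0}$.

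For the converse of (2), I first reduce to the ergodic case. Any Radon measure on $\R^2$ has local dimension at most $2$ at almost every point (by Lebesgue differentiation of the singular part), so Lemma~\ref{thm:FD-dim} implies $\dim(P^\square_\omega)\le 2$ for $P^\square$-almost every ergodic component; affinity of the assignment $P^\square\mapsto \dim(P^\square)$ together with $\dim(P^\square)=2$ then forces $\dim(P^\square_\omega)=2$ for $P^\square$-almost every $\omega$. Now assume that $P^\square$ is ergodic with $\dim(P^\square)=2$. By Theorem~\ref{thm:FDs-are-CPs}, I write $P=\cent\hat Q$ for an ergodic CP-distribution $Q$ on $\MM^\square\times[-1,1]^2$. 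Applying the Birkhoff ergodic theorem to the cocycle $f(\mu,x)=-\log\mu(D_1(x))$ under $M^\square$, together with the telescoping identity $-\log\mu(D_n(x))=\sum_{k=0}^{n-1}f\circ(M^\square)^k(\mu,x)$ and the comparison of $\mu(B(x,2^{-n}))$ with $\mu(D_n(x))$ up to bounded multiplicative factors, I obtain the entropy representation
\[
\dim(P^\square)=\frac{h(Q)}{\log 2},
\]
where $h(Q)=\int f\,\dd Q$ equals, by adaptedness, the $\overline{Q}$-average of the Shannon entropy of $\mu$ with respect to the four dyadic children of $[-1,1]^2$. This entropy is bounded above by $\log 4$, with equality if and only if $\mu$ assigns mass $\tfrac14$ to each child.

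Since $\dim(P^\square)=2$ forces equality, $\overline{Q}$-almost every $\mu$ splits uniformly at the first dyadic level. The $M^\square$-invariance of $Q$ propagates this uniform-splitting property to every dyadic scale, so $\overline{Q}$-almost every $\mu$ assigns mass $4^{-n}$ to every level-$n$ dyadic sub-cube of $[-1,1]^2$, which forces $\mu=\LL$ and hence $\overline{Q}=\delta_{\LL}$. Because the extended Lebesgue measure is translation and scaling invariant, the centering operation collapses $\hat Q$ to a Dirac distribution whose restriction is $\delta_{\LL}$, so $P^\square=\delta_{\LL}$. The hard part is establishing the entropy representation and verifying the propagation of the uniform-splitting property under $M^\square$-invariance; once these are in place, the conclusion follows from the strict concavity of the Shannon entropy.
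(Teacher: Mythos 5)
Your argument follows essentially the same route as the paper: part (1) via the pointwise nonnegativity of $\log\mu(B(0,r))/\log r$ on $\MM^\square$ forcing $\mu(B(0,r))=1$ a.s.\ for each $r$, and part (2) via the reduction to an ergodic component, its realization as $(\cent\hat Q)^\square$ for an ergodic CP-distribution $Q$, the entropy identity $\dim(P^\square)=h(Q)/\log 2$, and the propagation of uniform dyadic splitting under $M^\square$-invariance (the paper compresses this to ``we see by iterating''). One small imprecision: $\mu(B(x,2^{-n}))$ and $\mu(D_n(x))$ are \emph{not} pointwise comparable up to bounded multiplicative factors — a ball can straddle cubes with wildly different masses — but the exponential decay rates do agree $\mu$-a.e., which is the actual content of \cite[Lemma~2.1]{Furstenberg2008} that the paper cites and that your Birkhoff/telescoping derivation is reconstructing; with that correction your chain of reasoning is sound.
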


\begin{proof}
  Since clearly $\dim(\delta_{\delta_0}) = 0$, let $P^\square$ be a restricted fractal distribution such that $\dim(P^\square) = 0$. By Lemma \ref{thm:FD-dim}, we have
  \begin{equation*}
    0 = \dim(P^\square) = \int_{\MM^\square} \frac{\log\mu(B(0,1/n))}{\log 1/n} \dd P^\square(\mu)
  \end{equation*}
  for all $n \in \N$. It follows that $\mu(B(0,1/n)) = 1$ for $P^\square$-almost all $\mu \in \MM^\square$ and all $n \in \N$. Therefore, $\mu = \delta_0$ for $P^\square$-almost all $\mu \in \MM^\square$, which implies that $P = \delta_{\delta_0}$.

  To show the second claim, observe first that $\dim(\delta_{\LL}) = 2$. Let $P^\square$ be a restricted fractal distribution such that $\dim(P)=2$. By Theorem \ref{thm:theorem10}, it suffices to prove the claim for an ergodic fractal distribution. Recalling Theorem \ref{thm:FDs-are-CPs}, let $Q$ be an ergodic CP-distribution with $(\cent \hat Q)^\square = P^\square$. Since, by Theorem \ref{thm:EFD-generated}, $P^\square$-almost every $\mu \in \MM^\square$ is a uniformly scaling measure and thus, by Theorem \ref{thm:FD-dimloc}, $\dim(\mu) = \dim(P) = 2$ for $P^\square$-almost every $\mu \in \MM^\square$. It follows that $\dim(\mu) = 2$ for $\overline{Q}$-almost all $\mu$. By \cite[Lemma 2.1]{Furstenberg2008}, we see that
  \begin{equation*}
    2 = \dim(\mu) = \frac{h(\mu)}{\log 2} = -\int_{\MM^\square \times [-1,1]} \frac{\log\nu(D_1(x))}{\log 2} \dd Q(\nu,x)
  \end{equation*}
  for $\overline{Q}$-almost all $\mu$, where $h(\mu)$ is the measure-theoretic entropy of $\mu$ with respect to the dyadic cubes. Since the maximal entropy of any measure on $[-1,1]^2$ with respect to dyadic cubes is $\log 4$, we see by iterating that $P^\square$-almost every $\mu$ gives equal mass to every dyadic cube at a given level. It follows that $P^\square$-almost every $\mu$ is $\LL$ and hence, $P = \delta_{\LL}$.
\end{proof}

\subsection{Fractal distributions and conical densities}
Before we start examining connections between the scenery flow, conical densities, and rectifiability, we modify the quasi-Palm property for our purposes.

\begin{lemma} \label{thm:quasi-palm-mod}
  If $P^\square$ is a restricted fractal distribution with $\dim(P^\square) > 0$ and $\AA \subset \MM^\square$ is a Borel set with $P^\square(\AA)=1$, then $P^\square$-almost every $\nu \in \AA$ and $\nu$-almost every $z \in \R^2$ and for every $t \ge 0$ there exists $t_z \ge t$ such that $B(z,e^{-t_z}) \subset B(0,1)$ and $\nu_{z,t_z} \in \AA$.
\end{lemma}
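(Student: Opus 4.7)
The plan is to derive the statement from the quasi-Palm property applied to the extended version $\hat P$ of $P^\square$. First I would saturate $\AA$ along the scaling flow: define
\begin{equation*}
\tilde\AA = \{ \eta \in \MM^* : (S_s^* \eta)^\square \in \AA \text{ for Lebesgue-a.e.\ } s \ge 0 \}.
\end{equation*}
Since $P^\square(\AA) = 1$, the set $\BB = \{ \eta \in \MM^* : \eta^\square \in \AA \}$ satisfies $\hat P(\BB) = 1$; the $S^*$-invariance of $\hat P$ then gives $\hat P((S_s^*)^{-1} \BB) = 1$ for every $s \ge 0$, and Fubini's theorem upgrades this to $\hat P(\tilde\AA) = 1$.

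Next I would apply the quasi-Palm property of $\hat P$ to the full-measure Borel set $\tilde \AA$ with the bounded open neighborhood $U = B(0,1)$ of the origin. This yields, for $\hat P$-a.e.\ $\eta$ and $\eta$-a.e.\ $z \in B(0,1)$, the membership $T_z^*\eta \in \tilde\AA$; unpacking, for Lebesgue-a.e.\ $s \ge 0$,
\begin{equation*}
(S_s^* T_z^* \eta)^\square \in \AA.
\end{equation*}
The central calculation is to verify that, whenever $B(z, e^{-s}) \subset B(0,1)$, one has $(S_s^* T_z^* \eta)^\square = \nu_{z,s}^\square$ for $\nu = \eta^\square$: both sides equal the measure obtained from $\eta$ by restricting to $B(z, e^{-s})$, translating $z$ to $0$, rescaling by $e^s$, and normalizing to a probability, and the proportionality $\eta|_{B(0,1)} = \eta(B(0,1)) \cdot \nu$ makes the rival normalization factors cancel.

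Transferring the conclusion to the restricted setting, I obtain: for $P^\square$-a.e.\ $\nu$ and $\nu$-a.e.\ $z$ with $|z| < 1$, the set
\begin{equation*}
\{ s \ge -\log(1-|z|) : \nu_{z,s}^\square \in \AA \}
\end{equation*}
has full Lebesgue measure in $[-\log(1-|z|), \infty)$. For any prescribed $t \ge 0$ I may then pick $t_z$ in this set with $t_z \ge \max\{t, -\log(1-|z|)\}$; by construction $B(z, e^{-t_z}) \subset B(0,1)$ and $\nu_{z,t_z}^\square \in \AA$. The hypothesis $\dim(P^\square) > 0$ is used via Lemma~\ref{lem:onlydim} to rule out the trivial case $P^\square = \delta_{\delta_0}$ and to ensure that $\nu$-a.e.\ $z$ lies in the open ball $B(0,1)$ (i.e.\ $\nu(\partial B(0,1)) = 0$ for $P^\square$-a.e.\ $\nu$), so that $-\log(1-|z|)$ is finite $\nu$-a.e.

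The main obstacle is the normalization bookkeeping between the $*$ and $\square$ scaling conventions: verifying the identity $(S_s^* T_z^* \eta)^\square = \nu_{z,s}^\square$ under the containment condition, and ensuring that the quasi-Palm conclusion, which is naturally phrased for the extended distribution $\hat P$, transfers cleanly to the restricted scenery flow $\nu_{z,s}^\square$.
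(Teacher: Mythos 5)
Your reduction via the saturated set $\tilde{\AA}$, the $S^*$-invariance plus Fubini argument, the application of the quasi-Palm property to the extended distribution, and the normalization identity $(S_s^* T_z^* \eta)^\square = \nu_{z,s}^\square$ whenever $B(z,e^{-s}) \subset B(0,1)$ are all correct, and they make explicit the content that the paper compresses into the single sentence ``By the quasi-Palm property, it suffices to show that for $P^\square$-almost every $\nu$ we have $\nu(\partial B) = 0$ for all balls $B$.''

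The gap is in that remaining boundary claim. You assert that Lemma~\ref{lem:onlydim} ensures $\nu(\partial B(0,1)) = 0$ for $P^\square$-almost every $\nu$, but that lemma only says that $\dim(P^\square)>0$ is equivalent to $P^\square \ne \delta_{\delta_0}$; it is silent on whether $P^\square$-typical measures charge the boundaries of balls, and of course there are many probability measures on $B(0,1)$ other than $\delta_0$ that do charge $\partial B(0,1)$. This is precisely where the work of the lemma resides, and the paper's proof is devoted entirely to it: pass to an ergodic component, so that by Theorem~\ref{thm:EFD-generated} $P^\square$-almost every $\nu$ is a uniformly scaling measure generating $P^\square$; if $\nu(\partial B)>0$ for some ball $B$, then by the Besicovitch density theorem the normalized restriction $\nu_{\partial B}$ is again a uniformly scaling measure generating $P^\square$, which forces $P^\square$-typical measures to be supported on a hyperplane; iterating this by intersecting with lower-dimensional spheres drives one to $P^\square = \delta_{\delta_0}$, and only then is Lemma~\ref{lem:onlydim} invoked, as the final contradiction with $\dim(P^\square)>0$. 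Without supplying this argument, or an equivalent one establishing $\nu(\partial B(0,1))=0$ for $P^\square$-a.e.\ $\nu$, your proof is missing the essential step.
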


\begin{proof}
  By the quasi-Palm property, it suffices to show that for $P^\square$-almost every $\nu$ we have $\nu(\partial B)=0$ for all balls $B$. Suppose to the contrary that the set
  \begin{equation*}
    \AA = \{ \nu \in \MM^\square : \nu(\partial B) > 0 \text{ for some ball } B \}
  \end{equation*}
  has positive measure, $P^\square(\AA)>0$. By the ergodic decomposition, we may assume that $P^\square$ is ergodic. According to Theorem \ref{thm:EFD-generated}, $P^\square$-almost every $\nu \in \AA$ is a uniformly scaling measure generating $P^\square$. Recalling that for each $\nu \in \AA$ there is a ball $B$ so that $\nu(\partial B)>0$, it follows from the Besicovitch density theorem (see e.g.\ \cite[Theorem 2.14(1)]{Mattila1995}) that for $P$-almost every $\nu$ the normalized restriction $\nu_{\partial B}$ is a uniformly scaling measure for $P$. Each such $\nu_{\partial B}$ is supported on a $(d-1)$-dimensional sphere $\partial B$ and hence $P$-almost every $\nu \in \AA$ is supported on a $(d-1)$-dimensional plane. This is because tangent measures of measures supported on $\partial B$ are supported on a $(d-1)$-dimensional plane. Thus, in particular, $P^\square$-almost every measure $\nu \in \AA$ is supported on a $(d-1)$-dimensional plane. Since $\nu$ is supported on a $(d-1)$-plane $V_1$ and $\nu(\partial B_1)>0$, we have $\nu(\partial B_1\cap V_1)>0$, where the intersection $\partial B_1 \cap V_1$ is either $(d-2)$-dimensional or a single point. If the intersection is one point, then $P^\square = \delta_{\delta_0}$ which is a contradiction by Lemma \ref{lem:onlydim}.

  Now we continue inductively and show that $P^\square$ almost every $\nu$ gives positive measure for a $(d-3)$-dimensional set $\partial B \cap V_2$ where $V_2 \in G(d,d-2)$. Eventually, we are at dimension $1$ in which case, since the intersection of a line and $\partial B \cap V_{d-2}$ where $V_{d-2} \in G(d,2)$ is at most two points, the conclusion is that $P^\square = \delta_{\delta_0}$. This contradiction finishes the proof.
\end{proof}

The following lemma is needed to be able to work with the weak convergence.

\begin{lemma} \label{thm:lemma24}
  Let $0<\alpha\le 1$. The sets
  \begin{equation*}
    \AA_\eps^\alpha = \{ \nu \in \MM^\square : \nu(X(0,1,\theta,\alpha)) \le \eps \text{ for some } \theta \in S^1 \}
  \end{equation*}
  are nested and closed for all $\eps \ge 0$, and the set
  \begin{equation*}
    \AA_0^\alpha = \{ \nu \in \MM^\square : \spt(\nu) \cap X(0,1,\theta,\alpha) = \emptyset \text{ for some } \theta \in S^1 \}
  \end{equation*}
  satisfies $\AA_0^\alpha \subset S_t^{-1}(\AA_0^\alpha)$ for all $t \ge 0$.
\end{lemma}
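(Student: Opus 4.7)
The nestedness of $(\AA_\eps^\alpha)$ in $\eps$ is immediate from the definition, so the substantive content is closedness and the invariance $\AA_0^\alpha \subset S_t^{-1}(\AA_0^\alpha)$. The plan for closedness is to approximate the varying cones $X(0,1,\theta_n,\alpha)$ uniformly from inside by an open set independent of $n$, and to apply the Portmanteau theorem. The plan for invariance is to exploit the homogeneity of the angular inequality defining the cone: positive rescaling preserves the cone, while scaling by $e^t$ with $t \ge 0$ only shrinks the effective unit ball.

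For closedness, suppose $\nu_n \to \nu$ weakly in $\MM^\square$ with $\nu_n \in \AA_\eps^\alpha$ realized by $\theta_n \in S^1$, so $\nu_n(X(0,1,\theta_n,\alpha)) \le \eps$. By compactness of $S^1$, we may assume $\theta_n \to \theta$. For each $\delta \in (0,\alpha)$ and $r_0 > 0$, introduce the open set
\[
  U_{\delta,r_0} = \{ y \in \R^2 : |y| > r_0 \text{ and } |\proj_{\theta^\bot}(y)| < (\alpha-\delta)|y| \}.
\]
Since $\|\proj_{\theta_n^\bot}-\proj_{\theta^\bot}\| \to 0$, for $n$ sufficiently large every $y \in U_{\delta,r_0} \cap B(0,1)$ satisfies $|\proj_{\theta_n^\bot}(y)| \le |\proj_{\theta^\bot}(y)| + \|\proj_{\theta_n^\bot}-\proj_{\theta^\bot}\|\cdot|y| < \alpha|y|$, so $U_{\delta,r_0} \cap B(0,1) \subset X(0,1,\theta_n,\alpha)$. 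As $\spt(\nu_n) \subset B(0,1)$, the Portmanteau inequality for open sets gives
\[
  \nu(U_{\delta,r_0}) \le \liminf_{n\to\infty} \nu_n(U_{\delta,r_0}) \le \liminf_{n\to\infty} \nu_n(X(0,1,\theta_n,\alpha)) \le \eps.
\]
Taking $\delta, r_0 \downarrow 0$ along a sequence, the sets $U_{\delta,r_0} \cap B(0,1)$ increase to $X(0,1,\theta,\alpha) \setminus \{0\}$, and since the second description of the cone excludes $y = 0$, this limit equals $X(0,1,\theta,\alpha)$. Continuity of the measure from below yields $\nu(X(0,1,\theta,\alpha)) \le \eps$, hence $\nu \in \AA_\eps^\alpha$.

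For the invariance, let $\nu \in \AA_0^\alpha$ be witnessed by $\theta$, so $\spt(\nu) \cap X(0,1,\theta,\alpha) = \emptyset$. Since $S_t^\square \nu$ is obtained from the pushforward of $\nu$ under $y \mapsto e^t y$ by restriction to $B(0,1)$ and renormalization, $\spt(S_t^\square \nu) \subset e^t \spt(\nu) \cap B(0,1)$. Were some $w = e^t y \in \spt(S_t^\square\nu)$ with $y \in \spt(\nu)$ to lie in $X(0,1,\theta,\alpha)$, then $|w| \le 1$ would give $|y| = e^{-t}|w| \le e^{-t} \le 1$, while the homogeneous inequality $|\proj_{\theta^\bot}(w)| < \alpha|w|$ transfers verbatim to $|\proj_{\theta^\bot}(y)| < \alpha|y|$, placing $y \in X(0,1,\theta,\alpha) \cap \spt(\nu)$ and contradicting the choice of $\theta$. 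Hence $\spt(S_t^\square\nu) \cap X(0,1,\theta,\alpha) = \emptyset$ and the same $\theta$ witnesses $S_t^\square\nu \in \AA_0^\alpha$. The main technical obstacle is the closedness step, since $\nu \mapsto \nu(X(0,1,\theta,\alpha))$ is not weakly continuous and both the integrand and the direction $\theta_n$ vary with $n$; the doubly parameterized auxiliary sets $U_{\delta,r_0}$ sidestep this by simultaneously absorbing the angular perturbation (via $\delta$) and the undefined cone axis at the origin (via $r_0$).
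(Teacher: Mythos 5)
Your proof is correct and follows essentially the same route as the paper: compactness of $S^1$ to extract a convergent $\theta_n \to \theta$, inner approximation of the limit cone by open sets contained in $X(0,1,\theta_n,\alpha)$ for large $n$, and the Portmanteau inequality. The only cosmetic difference is that the extra cutoff parameter $r_0$ is superfluous — the set $\{y : |\proj_{\theta^\bot}(y)| < (\alpha-\delta)|y|\}$ is already open and already excludes the origin, so the paper's one-parameter family $X(0,1,\theta,\eta\alpha)$ with $\eta\uparrow 1$ does the same job — and you spell out the $S_t$-invariance that the paper dismisses as immediate.
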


\begin{proof}
  That the sets $\AA^\alpha_\eps$ are nested follows immediately from the definition. Let $\nu_i \in \AA^\alpha_\eps$ and $\nu \in \MM^\square$ be such that $\nu_i \to \nu$. Let $\theta_i \in S^1$ be so that
  \begin{equation*}
    \nu_i(X(0,1,\theta_i,\alpha)) \le \eps.
  \end{equation*}
  By the compactness of $S^1$, there is $\theta \in S^1$ such that $\theta_i \to \theta$ along a subsequence. Now for each $0<\eta<1$ we have $i_\eta$ such that
  \begin{equation*}
    X(0,1,\theta,\eta\alpha) \subset X(0,1,\theta_i,\alpha)
  \end{equation*}
  for all $i \ge i_\eta$. Since the cones are open, we have
  \begin{equation*}
    \nu(X(0,1,\theta,\eta\alpha)) \le \liminf_{i \to \infty} \nu_i(X(0,1,\theta,\eta\alpha)) \le \eps
  \end{equation*}
  for all $0<\eta<1$. Letting $\eta \uparrow 1$ we get
  \begin{equation*}
    \nu(X(0,1,\theta,\alpha)) \le \eps
  \end{equation*}
  and $\nu \in \AA^\alpha_\eps$. The second claim follows directly from the definitions.
%
\end{proof}

The proof of the following result is based on showing that there cannot be ``too many'' rectifiable tangent measures. This means that, perhaps surprisingly, most of the known conical density results are, in some sense, manifestations of rectifiability. In particular, the results follow from Lemma \ref{thm:lemma1} by applying the machinery of fractal distributions. 

\begin{theorem} \label{thm:theorem25}
  If $1<s\le 2$ and $0 < \alpha \le 1$, then there exists $\eps = \eps(s,\alpha) > 0$ satisfying the following: For every Radon measure $\mu$ on $\R^2$ with $\ldimh(\mu) \ge s$ it holds that
  \begin{equation*}
    \liminf_{T \to \infty} \la \mu \ra_{x,T}(\MM^\square \setminus \AA^\alpha_\eps) \ge s-1
  \end{equation*}
  for $\mu$-almost all $x \in \R^2$.
\end{theorem}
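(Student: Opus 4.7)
The strategy is to reduce the statement to a uniform bound on restricted fractal distributions and then establish that bound by an ergodic decomposition combined with the rectifiability criterion of Lemma \ref{thm:lemma1}. Since $\AA^\alpha_\eps$ is closed by Lemma \ref{thm:lemma24}, the Portmanteau theorem together with the extraction of a convergent subsequence from any subsequence of $(\la\mu\ra^\square_{x,T})_T$ yields
\[
  \limsup_{T \to \infty} \la\mu\ra^\square_{x,T}(\AA^\alpha_\eps) \le \sup_{P \in \TD(\mu,x)} P(\AA^\alpha_\eps).
\]
By Theorem \ref{thm:theorem11} each $P \in \TD(\mu,x)$ is a restricted fractal distribution, and by Theorem \ref{thm:FD-dimloc} it satisfies $\dim(P) \ge \ldimloc(\mu,x)$, which is at least $s$ for $\mu$-a.e.\ $x$. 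Since $\la\mu\ra^\square_{x,T}$ is a probability distribution, it suffices to find $\eps = \eps(s,\alpha) > 0$ with $P(\AA^\alpha_\eps) \le 2 - s$ for every restricted fractal distribution $P$ with $\dim(P) \ge s$.

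The core estimate I will establish, first at $\eps=0$ using the support version $\AA^\alpha_0$ of Lemma \ref{thm:lemma24}, is
\[
  P(\AA^\alpha_0) \le 2 - \dim(P). \qquad (\star)
\]
Using the ergodic decomposition (Theorem \ref{thm:theorem10}) and $\dim(P) = \int \dim(P_\omega)\,\dd P(\omega)$ from Lemma \ref{thm:FD-dim}, $(\star)$ reduces to showing $P_\omega(\AA^\alpha_0) = 0$ whenever the ergodic component $P_\omega$ satisfies $\dim(P_\omega) > 1$, because then
\[
  P(\AA^\alpha_0) \le P\{\omega: \dim(P_\omega) \le 1\} \le 2 - \dim(P)
\]
via the elementary bound $\dim(P) \le 1 \cdot P\{\dim(P_\omega)\le 1\} + 2 \cdot P\{\dim(P_\omega)>1\}$. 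For an ergodic $P_\omega$: Lemma \ref{thm:lemma24} gives $\AA^\alpha_0 \subset S_t^{-1}(\AA^\alpha_0)$, which under $S^\square$-invariance of $P_\omega$ forces equality modulo null sets, so by ergodicity $P_\omega(\AA^\alpha_0) \in \{0,1\}$. Assuming $P_\omega(\AA^\alpha_0)=1$, Theorems \ref{thm:EFD-generated} and \ref{thm:FD-dimloc} imply that $P_\omega$-a.e.\ $\nu$ is a uniformly scaling measure with $\dim(\nu) = \dim(P_\omega) > 1$. Applying Lemma \ref{thm:quasi-palm-mod} with $\AA = \AA^\alpha_0$ (which has full $P_\omega$-measure), I obtain, for $\nu$-a.e.\ $z$, a scale $r_z>0$ and direction $\theta_z \in S^1$ with $\spt(\nu) \cap X(z,r_z,\theta_z,\alpha) = \emptyset$: a scale $t_z$ at which the normalised blow-up $\nu_{z,t_z}$ lies in $\AA^\alpha_0$ translates back via $T_z$ and $S_{t_z}$ into a missing cone at $z$. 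The set $E$ of such points has full $\nu$-measure, and Lemma \ref{thm:lemma1}, applied after countably discretising $\theta_z$ and $r_z$, shows that $E$ is rectifiable; hence $\ldimh(\nu) \le \dimh(E) \le 1$, contradicting $\dim(\nu) > 1$. This proves $(\star)$.

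To transfer $(\star)$ to a positive $\eps$, I will use that $(\AA^\alpha_\eps)_{\eps>0}$ is a nested family of closed sets (Lemma \ref{thm:lemma24}) whose intersection coincides with $\AA^\alpha_0$ up to the harmless boundary-of-ball technicality. Combined with the compactness of $\{P \text{ restricted FD} : \dim(P) \ge s\}$ (Theorem \ref{thm:theorem20} and continuity of $\dim$, Lemma \ref{thm:FD-dim-cont}) and the upper semicontinuity of $P \mapsto P(\AA^\alpha_\eps)$ for each fixed $\eps$, a standard contradiction-and-weak-limit argument (if the uniform bound failed along $\eps_n \downarrow 0$ with witnesses $P_n$, extract a weak limit $P^*$ with $\dim(P^*)\ge s$ and $P^*(\AA^\alpha_\eps) \ge 2-s$ for every $\eps>0$ by Portmanteau, hence $P^*(\AA^\alpha_0) \ge 2-s$, contradicting $(\star)$ strictly when the two sides differ, or using the slack coming from $\dim(P^*)>s$ along the subsequence otherwise) produces the required $\eps = \eps(s,\alpha)$. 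The main obstacle is the ergodic-theoretic step in the second paragraph: leveraging the $S^\square$-invariance and the quasi-Palm property to transfer conical vanishing from scale $1$ at the origin to arbitrary-small scales at $\nu$-a.e.\ point, which is exactly what feeds Lemma \ref{thm:lemma1} and supplies the dimensional contradiction.
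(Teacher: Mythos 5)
Your proof mirrors the paper's core argument: reduce to tangent/fractal distributions via Theorems \ref{thm:theorem11} and \ref{thm:FD-dimloc}, invoke the ergodic decomposition (Theorem \ref{thm:theorem10}), the $\{0,1\}$-law for $\AA^\alpha_0$ coming from the sub-invariance in Lemma \ref{thm:lemma24}, the quasi-Palm property (Lemma \ref{thm:quasi-palm-mod}), and Lemma \ref{thm:lemma1} to force $\dim(P_\omega)\le 1$ on ergodic components with $P_\omega(\AA^\alpha_0)=1$, and then integrate the dimension over the decomposition. The reorganization of isolating the estimate $(\star)$, namely $P(\AA^\alpha_0)\le 2-\dim(P)$, is a genuinely clean way to package the heart of the argument, and $(\star)$ itself is established correctly.

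The problem is the transfer from $\eps=0$ to a fixed $\eps>0$, which you acknowledge but do not actually close. Your weak-limit argument produces a restricted fractal distribution $P^*$ with $\dim(P^*)\ge s$ and $P^*(\AA^\alpha_0)\ge 2-s$ (by Portmanteau and the nestedness of the sets $\AA^\alpha_\eta$). But $(\star)$ only yields $P^*(\AA^\alpha_0)\le 2-\dim(P^*)\le 2-s$, so the two bounds coincide and there is no contradiction whenever $\dim(P^*)=s$ exactly --- which is entirely possible, since continuity of the dimension function only preserves the non-strict inequality $\dim(P^*)\ge s$ (and Theorem \ref{thm:theorem26} shows such $P^*$ with $P^*(\AA^\alpha_0)=2-s$ really exist). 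Your proposed escape, ``slack from $\dim(P^*)>s$,'' simply does not apply in this borderline case, so the contradiction does not close. The paper handles this by building the slack into the contradiction hypothesis from the start: it fixes $p<s-1-\delta<s-1$ and assumes $\limsup_T\la\mu\ra_{x,T}(\AA^\alpha_\eps)>1-p$ on a positive-measure set, so that the final chain $s-\delta\le\dim(P)\le 2-P(\AA^\alpha_0)\le 1+p$ yields $p\ge s-1-\delta$, a genuine contradiction with the strict choice of $p$. You should adopt this pre-slack device (or equivalently, parametrize your goal as $P(\AA^\alpha_\eps)\le 1-p$ for a fixed $p<s-1$ and track the slack $\delta=s-1-p$ through the inequalities), rather than hoping for strictness to appear for free at the weak limit.
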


\begin{proof}
  Let $p,\delta>0$ be such that
  \begin{equation*}
    p < s-1-\delta < s-1.
  \end{equation*}
  Suppose to the contrary that for each $\eps>0$ there exists a Radon measure $\mu$ with $\ldimh(\mu) \ge s$ such that
  \begin{equation*}
    \limsup_{T \to \infty}\la \mu \ra_{x,T}(\AA^\alpha_\eps) > 1-p
  \end{equation*}
  on a set $E_\eps$ of positive measure. By Theorems \ref{thm:theorem11} and \ref{thm:FD-dimloc}, we may assume that at points $x \in E_\eps$, all tangent distributions of $\mu$ are restricted fractal distributions and
  \begin{equation*}
    \inf\{ \dim(P) : P \in \TD(\mu,x) \} = \ldimloc(\mu,x) > s-\delta.
  \end{equation*}
  Fix $x \in E_\eps$. For each $\eps>0$, as $\AA^\alpha_\eps$ is closed by Lemma \ref{thm:lemma24}, we find a tangent distribution $P_\eps$ so that $P_\eps(\AA^\alpha_\eps) \ge 1-p$. Let $P$ be a weak limit of $P_\eps$ as $\eps \downarrow 0$. By Theorem \ref{thm:theorem20}, $P$ is a restricted fractal distribution, and by Lemma \ref{thm:FD-dim-cont}, we have
  \begin{equation*}
    \dim(P) \ge s-\delta.
  \end{equation*}
  Since the sets $\AA^\alpha_\eps$ are nested and closed, we have
  \begin{equation*}
    P(\AA^\alpha_\eps) \ge \limsup_{\eta \downarrow 0}P_\eta(\AA^\alpha_\eps) \ge \limsup_{\eta \downarrow 0}P_\eta(\AA^\alpha_\eta) \ge 1-p,
  \end{equation*}
  and thus,
  \begin{equation*}
    P(\AA^\alpha_0) = \lim_{\eps \downarrow 0}P(\AA^\alpha_\eps) \ge 1-p.
  \end{equation*}
  Theorem \ref{thm:theorem10} guarantees that $P$-almost every ergodic component $P_\omega$ is a restricted fractal distribution. By Lemma \ref{thm:lemma24}, we have $\AA^\alpha_0 \subset S_t^{-1}(\AA^\alpha_0)$ for all $t \ge 0$. Since $P_\omega$ is $S$-invariant, we have $P(\AA^\alpha_0) = P(S_t^{-1}(\AA^\alpha_0))$ for all $t \ge 0$. Therefore, $\AA^\alpha_0 = S_t^{-1}(\AA^\alpha_0)$ up to $P_\omega$-measure zero. Hence we have $P_\omega(\AA^\alpha_0) \in \{ 0,1 \}$ for $P$-almost all $\omega$.
  
  If $P_\omega(\AA^\alpha_0) = 0$, then we use the trivial estimate $\dim(P_\omega) \le 2$. Let us consider the case where $P_\omega(\AA^\alpha_0) = 1$. Since $P_\omega$ satisfies the quasi-Palm property of Lemma \ref{thm:quasi-palm-mod}, for $P_\omega$-almost every $\nu$ and $\nu$-almost every $z$ we have $\nu_{z,t_z} \in \AA^\alpha_0$ for some $t_z>0$ with $B(z,e^{-t_z}) \subset B(0,1)$. Thus if
  \begin{equation*}
    E_\nu = \{ z \in B(0,1) : \nu_{z,t_z} \in \AA^\alpha_0 \},
  \end{equation*}
  then for $P_\omega$-almost every $\nu$ we have $\nu(E_\nu)=1$. By the definition of $\AA^\alpha_0$, for every $z \in E$ there is $\theta \in S^1$ with
  \begin{equation*}
    E_\nu \cap X(z,e^{-t_z},\theta,\alpha) = \emptyset.
  \end{equation*}
  Lemma \ref{thm:lemma1} implies that $E_\nu$ is rectifiable. In particular, $\dim(\nu) \le 1$ and $\dim(P) \le 1$.
  
  Since $P_\omega(\AA^\alpha_0) \in \{ 0,1 \}$ for $P_\omega$-almost every $\omega$, we have
  \begin{equation*}
    1-p \le P(\AA^\alpha_0) = \int P_\omega(\AA^\alpha_0) \dd P(\omega) = P(\{ \omega : P_\omega(\AA^\alpha_0) = 1 \}).
  \end{equation*}
  Using this, we estimate
  \begin{align*}
    s-\delta &\le \dim(P) = \int \dim(P_\omega) \dd P(\omega) \\
    &\le P(\AA^\alpha_0) + 2(1-P(\AA^\alpha_0)) \le (1-p) + 2p.
  \end{align*}
  It follows that $p \ge s-1-\delta$ which is a contradiction with the choice of $\delta$.
\end{proof}

Theorem \ref{thm:theorem25} claims that there exists $\eps$ such that if $\ldimh(\mu) \ge s > 1$, then for the proportion $s-1$ of scales $e^t>0$ we have
\begin{equation*}
  \inf_{\theta \in S^1} \frac{\mu(X(x,e^{-t},\theta,\alpha))}{\mu(B(x,e^{-t}))} > \eps
\end{equation*}
for $\mu$-almost all $x \in \R^2$. In these scales, the measure $\mu$ is well distributed. The following theorem demonstrates that the proportion $s-1$ is sharp. Note that, by the rotation invariance of the Lebesgue measure $\LL^2$, the quantity
\begin{equation*}
  \frac{\LL^2(X(0,1,\theta,\alpha))}{\LL^2(B(0,1))}.
\end{equation*}
does not depend on $\theta \in S^1$. Let us denote it by $\eps(\alpha)$.

\begin{theorem} \label{thm:theorem26}
  If $1<s\le 2$ and $0 < \alpha \le 1$, then there exists a Radon measure $\mu$ on $\R^2$ with $\dim(\mu)=s$ such that
  \begin{equation*}
    \lim_{T \to \infty} \la \mu \ra_{x,T}(\MM^\square \setminus \AA^\alpha_\eps) =
    \begin{cases}
      s-1, &\text{if } 0<\eps<\eps(\alpha), \\
      0,   &\text{if } \eps>\eps(\alpha)
    \end{cases}
  \end{equation*}
  for $\mu$-almost all $x \in \R^2$.
\end{theorem}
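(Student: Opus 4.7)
The plan is to build $\mu$ as a uniformly scaling measure whose generated fractal distribution is a convex combination of two explicit extremes: the normalized Lebesgue measure (which maximally charges every cone) and the length measure on a line through the origin (which has a direction giving zero cone mass). Let $\LL$ denote the normalized Lebesgue measure on $B(0,1)$ and let $\ell$ denote the $1$-dimensional Hausdorff measure restricted to $L \cap B(0,1)$ and normalized to a probability measure, where $L$ is a fixed line through the origin. Both $\delta_\LL$ and $\delta_\ell$ are restricted fractal distributions: their extended versions are $S^*$-invariant by the scale invariance of $\LL$ and of the $1$-dimensional Hausdorff measure on $L$, and they are quasi-Palm because translations by points of the respective supports preserve the extended measures (translations of Lebesgue are Lebesgue, translations of the line measure along $L$ are the line measure). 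By the convexity statement of Theorem \ref{thm:theorem20}, the distribution
\begin{equation*}
  P^\square = (s-1)\delta_\LL + (2-s)\delta_\ell
\end{equation*}
is therefore a restricted fractal distribution with $\dim(P^\square) = (s-1)\cdot 2 + (2-s)\cdot 1 = s$. By Theorem \ref{thm:theorem22}, there exists a uniformly scaling measure $\mu$ generating $P^\square$, and Theorem \ref{thm:FD-dimloc} yields $\dim(\mu) = s$.

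Since $\mu$ is uniformly scaling, for $\mu$-almost every $x$ we have $\la\mu\ra_{x,T}^\square \to P^\square$ weakly as $T \to \infty$. Because $\AA^\alpha_\eps$ is closed by Lemma \ref{thm:lemma24}, the Portmanteau theorem delivers the required convergence on $\AA^\alpha_\eps$ as soon as $P^\square(\partial\AA^\alpha_\eps) = 0$ whenever $\eps \ne \eps(\alpha)$. Since $P^\square$ is supported on the two measures $\LL$ and $\ell$, it suffices to verify that neither lies on $\partial\AA^\alpha_\eps$. Lemma \ref{thm:lemma24} shows that the functional $F(\nu) = \inf_{\theta \in S^1}\nu(X(0,1,\theta,\alpha))$ is lower semicontinuous. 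By rotation invariance $F(\LL) = \eps(\alpha)$, so for $\eps<\eps(\alpha)$ lower semicontinuity places $\LL$ in the interior of $\MM^\square \setminus \AA^\alpha_\eps$, while for $\eps>\eps(\alpha)$ weak continuity of $\nu\mapsto\nu(X(0,1,\theta,\alpha))$ at $\nu = \LL$ (the cone boundaries have zero Lebesgue measure) plugged into the infimum places $\LL$ in the interior of $\AA^\alpha_\eps$. For the direction $\theta_0 \perp L$ we have $\ell(X(0,1,\theta_0,\alpha)) = 0$ and $\ell(\overline{X(0,1,\theta_0,\alpha)}) = 0$ (since $L$ meets the closed cone only at the origin), so Portmanteau for closed sets yields $\ell \in \inter\AA^\alpha_\eps$ for every $\eps > 0$.

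Computing $P^\square(\MM^\square\setminus\AA^\alpha_\eps)$ then finishes the proof: when $\eps < \eps(\alpha)$ only $\LL$ lies outside $\AA^\alpha_\eps$, contributing mass $s-1$; when $\eps > \eps(\alpha)$ both support points lie inside, contributing mass $0$. The Portmanteau convergence thus gives the stated limits.

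The main subtle point will be verifying the quasi-Palm property for $\delta_\ell$, since $\ell$ is singular with respect to ambient Lebesgue measure; this works precisely because the integral in the quasi-Palm definition is against $\ell$ itself and thus only samples translations along $L$, where the extended line measure is translation invariant. The excluded value $\eps = \eps(\alpha)$ is exactly where $\LL$ sits on $\partial\AA^\alpha_\eps$, so Portmanteau provides only the one-sided bound there and not convergence — which is why the theorem statement avoids this critical threshold.
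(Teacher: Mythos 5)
Your proof follows the paper's own approach step for step: construct a uniformly scaling measure $\mu$ generating $P^\square = (s-1)\delta_{\LL} + (2-s)\delta_{\HH}$ via Theorem~\ref{thm:theorem22} (where $\HH$ is your $\delta_\ell$, the normalized restriction of $\HH^1$ to $L\cap B(0,1)$), compute $\dim(P^\square) = 2(s-1) + (2-s) = s$, and conclude by the weak convergence $\la\mu\ra^\square_{x,T} \to P^\square$. Your main addition is the explicit Portmanteau check that $P^\square(\partial\AA^\alpha_\eps) = 0$ when $\eps \ne \eps(\alpha)$, a point the paper compresses into the single phrase ``by the weak convergence,'' and the brief verification that $\delta_{\LL}$ and $\delta_\ell$ are restricted fractal distributions, which the paper takes as given. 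These are welcome details and correct for $\alpha < 1$.

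One caveat worth flagging: your claim $\ell(\overline{X(0,1,\theta_0,\alpha)}) = 0$, justified by ``$L$ meets the closed cone only at the origin,'' holds for $\alpha < 1$ but fails at the endpoint $\alpha = 1$. Indeed $X(0,1,\theta_0,1) = B(0,1)\setminus L$, so its closure is all of $\overline{B(0,1)}$, and $\ell(\overline{X(0,1,\theta_0,1)}) = 1$. Consequently $\ell$ need not lie in $\inter\AA^{1}_\eps$ (a sequence of absolutely continuous measures converging weakly to $\ell$ gives no mass to any line), and Portmanteau then yields only one of the two inequalities needed to identify the limit. Since the theorem statement permits $\alpha = 1$ (for which $\eps(1)=1$, so only the range $0<\eps<1$ is in play), this case is left unresolved by your argument; the paper's proof has the same implicit gap. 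For $\alpha < 1$ your argument is complete.
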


\begin{proof}
  The measure $\mu$ is a uniformly scaling measure generating
  \begin{equation*}
    P^\square = (s-1)\delta_{\LL} + (1-(s-1))\delta_{\HH},
  \end{equation*}
  where $\LL$ is the normalization of $\LL^2|_{B(0,1)}$ and $\HH$ is the normalization of $\HH^1|_{\ell \cap B(0,1)}$ for a fixed line $\ell$ through the origin. Note that since $P^\square$ is a convex combination of two restricted fractal distributions, it is a restricted fractal distribution. Theorem \ref{thm:theorem22} quarantees the existence of such a measure $\mu$.
  
  By Theorem \ref{thm:FD-dimloc}, $\mu$ is exact-dimensional and
  \begin{equation*}
    \dim(\mu) = \dim(P^\square) = 2(s-1) + (1-(s-1)) = s.
  \end{equation*}
  Fix $0<\eps<\eps(\alpha)$. Since $\LL(X(0,1,\theta,\alpha)) = \eps(\alpha) > \eps$ for all $\theta \in S^1$ and $\HH(X(0,1,\ell^\bot,\alpha)) = 0$ we have
  \begin{equation*}
    P^\square(\MM^\square \setminus \AA^\alpha_\eps) = s-1.
  \end{equation*}
  Thus, by the weak convergence, it follows that
  \begin{equation*}
    \lim_{T \to \infty} \la \mu \ra_{x,T}(\MM^\square \setminus \AA^\alpha_\eps) = s-1.
  \end{equation*}
  The case $\eps > \eps(\alpha)$ is treated similarly.
\end{proof}

In Example \ref{ex:example9}, we found a purely unrectifiable measure $\mu$ and $\theta \in S^1$ such that for every $0 < \alpha < 1$
\begin{equation*}
  \lim_{r \downarrow 0} \frac{\mu(X(x,r,\theta,\alpha))}{\mu(B(x,r))} = 0
\end{equation*}
for $\mu$-almost all $x \in \R^2$. This reflects the fact that rectifiability can be broken by having the measure ``look unrectifiable'' at some sparse sequence of scales. In fact, $\mu$ is rectifiable if and only if at $\mu$-almost every point each tangent measure is $\HH^1|_{\ell \cap B(0,1)}$ for some line $\ell$; see \cite[Theorem 16.5]{Mattila1995}. Therefore, if $\mu$ is rectifiable, then at $\mu$-almost every $x$ any $P \in \TD(\mu,x)$ satisfies
\begin{equation*}
  P(\{ \nu \in \MM^\square : \spt(\nu) \text{ is rectifiable} \}) = 1.
\end{equation*}
Note, however, that there exists a purely unrectifiable measure $\mu$ such that
\begin{equation*}
  P(\{ \nu \in \MM^\square : \spt(\nu) \text{ is not rectifiable} \}) = 0
\end{equation*}
for all $P \in \TD(\mu,x)$ at $\mu$-almost all $x \in \R^2$; see \cite[\S 20]{DavidSemmes1991}. Motivated by these observations, we say that a Radon measure $\mu$ is \emph{$p$-average unrectifiable} for $0\le p<1$ if
\begin{equation*}
  P(\{ \nu \in \MM^\square : \spt(\nu) \text{ is not rectifiable} \}) > p
\end{equation*}
for every $P \in \TD(\mu,x)$ at $\mu$-almost all $x \in \R^2$. For example, if $\mu$ is the $1$-dimensional Hausdorff measure on $\R^2$ restricted to the product of two $\tfrac14$-Cantor sets, then
\begin{equation*}
  P(\{ \nu \in \MM^\square : \spt(\nu) \text{ is not rectifiable} \}) = 1
\end{equation*}
for every $P \in \TD(\mu,x)$ at $\mu$-almost all $x \in \R^2$; see e.g.\ \cite{Bandt2001}. On the other hand, any measure $\mu$ supported on a self-similar set $E$ satisfying the strong separation condition of dimension strictly less than $1$ fails to be $0$-average unrectifiable. Indeed, it follows from self-similarity that for any $x\in E$ and $P\in\TD(\mu,x)$, the support of $P$-almost every measure $\nu$ is contained in a homothetic copy of $E$. Recall that any set of upper Minkowski dimension strictly less than $1$ can be covered by a single Lipschitz curve; see e.g.\ \cite[Lemma 3.1]{BalkaHarangi2014}.

\begin{lemma} \label{lem:dim-unrectifiability}
  If $\mu$ is a Radon measure on $\R^2$ such that $\ldimh(\mu) > s > 1$, then
  \begin{equation*}
    P(\{ \nu \in \MM^\square : \spt(\nu) \text{ is not rectifiable} \}) > s-1
  \end{equation*}
  for every $P \in \TD(\mu,x)$ at $\mu$-almost all $x \in \R^2$. In particular, $\mu$ is $(s-1)$-average unrectifiable.
\end{lemma}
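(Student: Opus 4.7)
The plan is to follow the same dichotomy-by-ergodic-components argument used in the proof of Theorem \ref{thm:theorem25}, but replacing the cone-avoidance set $\AA^\alpha_0$ by the set of measures with rectifiable support. Write $\RR = \{\nu \in \MM^\square : \spt(\nu) \text{ is rectifiable}\}$. The goal is to show that, for $\mu$-almost every $x$ and every $P \in \TD(\mu,x)$, we have $P(\RR) < 2-s$.

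First I would fix such an $x$ for which $\ldimloc(\mu,x) \ge \ldimh(\mu) > s$; by hypothesis and the definition of $\ldimh$, this holds on a set of full $\mu$-measure. By Theorem \ref{thm:theorem11}, every $P \in \TD(\mu,x)$ is a restricted fractal distribution, and by Theorem \ref{thm:FD-dimloc} any such $P$ satisfies
\[
\dim(P) \ge \ldimloc(\mu,x) > s.
\]
Next I would verify that $\RR$ is $S^\square$-invariant in the sense $\RR \subset (S^\square_t)^{-1}(\RR)$ for every $t \ge 0$: indeed $\spt(S^\square_t \nu)$ is, up to the scaling $y \mapsto e^t y$, a subset of $\spt(\nu)$, and rectifiability is preserved under bilipschitz maps and under taking subsets covered by the same countable family of Lipschitz curves. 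Combined with the $S^\square$-invariance of $P$, this forces $\RR$ to coincide with $(S^\square_t)^{-1}(\RR)$ up to $P$-null sets for every $t$.

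Now I apply the ergodic decomposition from Theorem \ref{thm:theorem10}: write $P = \int P_\omega \dd P(\omega)$ where $P$-a.e.\ component $P_\omega$ is an ergodic fractal distribution. By ergodicity and the previous paragraph, $P_\omega(\RR) \in \{0,1\}$ for $P$-almost every $\omega$. On the ergodic components with $P_\omega(\RR) = 1$, Theorem \ref{thm:EFD-generated} says that $P_\omega$-a.e.\ measure $\nu$ is a uniformly scaling measure generating $P_\omega$, so by Theorem \ref{thm:FD-dimloc} we have $\dim(\nu) = \dim(P_\omega)$; but $\spt(\nu)$ being rectifiable forces $\dim_H(\spt(\nu)) \le 1$, and since $\ldimh(\nu) \le \dim_H(\spt(\nu))$, we conclude $\dim(P_\omega) \le 1$. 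On the remaining components I use the trivial bound $\dim(P_\omega) \le 2$.

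Assembling these estimates and using the ergodic decomposition of the dimension (Lemma \ref{thm:FD-dim}),
\[
s < \dim(P) = \int \dim(P_\omega) \dd P(\omega) \le 1 \cdot P(\RR) + 2 \cdot (1 - P(\RR)) = 2 - P(\RR),
\]
which rearranges to $P(\RR) < 2 - s$, i.e.\ $P(\MM^\square \setminus \RR) > s - 1$, as required. The only subtlety I expect is confirming the $S^\square$-invariance of $\RR$ cleanly (the set is not closed and the flow is discontinuous), but the one-sided inclusion $\RR \subset (S^\square_t)^{-1}(\RR)$ is enough when combined with the invariance of $P_\omega$ and a standard ergodicity argument, exactly as in the treatment of $\AA^\alpha_0$ in the proof of Theorem \ref{thm:theorem25}.
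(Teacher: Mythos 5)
Your argument is correct, but it is longer than the paper's. The paper goes directly: by Lemma \ref{thm:FD-dim}, $P$-almost every $\nu$ is exact-dimensional, so $\dim(P)=\int\dim(\nu)\dd P(\nu)$; then one simply notes that $\dim(\nu)\le\dimh(\spt\nu)\le 1$ whenever $\spt(\nu)$ is rectifiable, and $\dim(\nu)\le 2$ always, which gives $s<\dim(P)\le 1\cdot(1-P(\AA))+2\cdot P(\AA)=1+P(\AA)$ with $\AA=\MM^\square\setminus\RR$, i.e.\ $P(\AA)>s-1$. You instead import the entire apparatus from the proof of Theorem~\ref{thm:theorem25}: $S^\square_t$-invariance of $\RR$, ergodic decomposition, the zero--one dichotomy $P_\omega(\RR)\in\{0,1\}$, and Theorems~\ref{thm:theorem10}, \ref{thm:EFD-generated} to conclude $\dim(P_\omega)\le 1$ on the rectifiable components. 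That works — the key reason being that the pointwise bound $\dim(\nu)\le 1$ for rectifiable-support $\nu$ already holds measure-by-measure, so ergodicity is not needed to localize it — but it makes the lemma depend on more of the theory than necessary. Your approach would be the right template if rectifiability of $\spt(\nu)$ constrained the dimension only through a collective, flow-invariant condition; here the constraint is local, and the paper's one-line integration over measures is the cleaner route. One minor caveat in your version: you should note that on components with $P_\omega(\RR)=1$ you only need $\dim(\nu)\le 1$ for \emph{some} $\nu$ in the support of $P_\omega$ with $\spt(\nu)$ rectifiable, which Theorem~\ref{thm:EFD-generated} supplies; this is fine but is exactly the step that the direct integration renders unnecessary.
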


\begin{proof}
  By Theorems \ref{thm:theorem11} and \ref{thm:FD-dimloc}, at $\mu$-almost every $x \in \R^2$, all elements of $\TD(\mu,x)$ are restricted fractal distributions and
  \begin{equation*}
    s < \ldimloc(\mu,x) = \inf\{ \dim P : P \in \TD(\mu,x) \}.
  \end{equation*}
  Pick such a point $x$, choose any $P \in \TD(\mu,x)$, and write
  \begin{equation*}
    \AA = \{ \nu \in \MM^\square : \spt(\nu) \text{ is not rectifiable} \}.
  \end{equation*}
  Recall that if a measure $\nu$ has a rectifiable support, then $\ldimh(\nu) \le \dimh(\spt(\nu)) \le 1$. Moreover, any measure $\nu$ on $\R^2$ satisfies $\ldimh(\nu) \le 2$. Thus we deduce that
  \begin{equation*}
    s < \dim(P) = \int_{\MM^\square} \dim(\nu) \dd P(\nu) \le 1-P(\AA) + 2P(\AA) = 1 + P(\AA).
  \end{equation*}
  Hence $P(\AA) > s-1$, showing that $\mu$ is $(s-1)$-average unrectifiable as claimed.
\end{proof}

\begin{lemma} \label{lma:constructionofunrect}
  If $0 \leq p \leq 1$, then there exists a uniformly scaling measure $\mu$ generating a fractal distribution $P$ with
  \begin{equation*}
    P(\{\nu \in \MM^\square : \spt\nu \text{ is not strongly $k$-rectifiable}\}) = p.
  \end{equation*}
\end{lemma}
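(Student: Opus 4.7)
The strategy is to construct $P$ as a convex combination of two fractal distributions, one concentrated on measures whose support is strongly $k$-rectifiable and one concentrated on measures whose support is not, then invoke Theorem \ref{thm:theorem22} to realize $P$ as the tangent distribution of a uniformly scaling measure.

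First I would exhibit the two building blocks. For the ``non-rectifiable'' piece, take $\LL$ to be the normalized Lebesgue measure on $B(0,1) \subset \R^d$ (or $\R^2$ in the working dimension), so that $\spt\LL = B(0,1)$ has full $d$-dimensional measure and hence cannot be covered by a countable union of Lipschitz images of $\R^k$ when $k<d$. Then $\delta_{\LL}$ is a restricted fractal distribution: it is trivially $S^\square$-invariant because Lebesgue measure is self-similar under scaling about the origin, and it is quasi-Palm because translating $\LL$ by any vector in $B(0,1)$ returns Lebesgue measure on a translated ball, which after normalization and restriction is again $\LL$. (This also follows from Lemma \ref{lem:onlydim}.) For the ``rectifiable'' piece, fix a $k$-dimensional linear subspace $V \subset \R^d$ and let $\HH$ be the normalization of $\HH^k|_{V \cap B(0,1)}$. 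By the same reasoning, $\delta_{\HH}$ is a restricted fractal distribution, and $\spt\HH \subset V$ is strongly $k$-rectifiable.

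Next I would set
\[
  P^\square = p \cdot \delta_{\LL} + (1-p) \cdot \delta_{\HH}.
\]
Being a convex combination of restricted fractal distributions, $P^\square$ is itself a restricted fractal distribution by Theorem \ref{thm:theorem20}. Evaluating on the Borel set $\AA = \{\nu \in \MM^\square : \spt\nu \text{ is not strongly $k$-rectifiable}\}$ gives
\[
  P^\square(\AA) = p \cdot \delta_{\LL}(\AA) + (1-p) \cdot \delta_{\HH}(\AA) = p \cdot 1 + (1-p) \cdot 0 = p,
\]
where I use that $\LL \in \AA$ (since $\dimh(\spt\LL) = d > k$) and $\HH \notin \AA$ (since $\spt\HH \subset V$ is a subset of a $k$-plane).

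Finally I would apply Theorem \ref{thm:theorem22}, which gives a uniformly scaling measure $\mu$ generating $P^\square$, completing the proof. The only step with any subtlety is the verification that $\delta_{\LL}$ and $\delta_{\HH}$ are fractal distributions, but this is essentially immediate from the translation and scaling invariance of $\LL$ and $\HH$; none of the other steps involves more than invoking the structural results already established (Theorems \ref{thm:theorem20} and \ref{thm:theorem22}).
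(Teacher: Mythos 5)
Your proof is correct, and it takes a genuinely different (and simpler) route than the paper. The paper's construction for this lemma takes $P = pQ + (1-p)\delta_{\HH}$, where $Q$ is the restricted ergodic fractal distribution generated by $\HH^1$ restricted to a $1$-dimensional self-similar set $E$ with the strong separation condition; it then invokes Bandt's work to conclude that $Q$-a.e.\ $\nu$ has support equal to a homothetic copy of $E$ restricted to the ball, hence purely unrectifiable. Your choice of $\delta_{\LL}$ in place of $Q$ bypasses all of this: the support of $\LL$ is a ball of positive Lebesgue measure, so it is trivially not $k$-rectifiable for $k < d$, and the verification that $\delta_{\LL}$ is a restricted fractal distribution is immediate from translation and scale invariance (or, as you note, from Lemma \ref{lem:onlydim}). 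In fact the paper itself uses the $p\,\delta_{\LL} + (1-p)\,\delta_{\HH}$ combination in the proof of Theorem \ref{thm:theorem26}, so you are essentially recycling that device here. What the paper's choice buys is an example in which the unrectifiable component also has dimension $1$ (matching the rectifiable piece), which is a somewhat sharper illustration of how rectifiability fails independently of dimension; but the lemma as stated only asks for the probability to equal $p$, and for that your construction is fully adequate. The remaining steps — convexity of restricted fractal distributions (Theorem \ref{thm:theorem20}) and existence of a generating uniformly scaling measure (Theorem \ref{thm:theorem22}) — are invoked correctly, and the edge cases $p=0$ and $p=1$ are handled by the same formula. No gaps.
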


\begin{proof}
  The proof is similar to that of Theorem \ref{thm:theorem26}. Let $E \subset \R^2$ be a $1$-dimensional self-similar set satisfying a strong separation condition obtained from a finite number of homotheties. By the self-similarity of $E$, the Hausdorff measure $\HH^1|_E$ is a uniformly scaling measure generating a restricted ergodic fractal distribution $Q$ supported on measures $\nu$ such that $\spt(\nu)$ is a translated and scaled copy of $E$ restricted to the unit ball; see Bandt \cite{Bandt2001}. Thus for $Q$-almost every $\nu$ the support $\spt(\nu)$ is purely unrectifiable. This shows that
  \begin{equation*}
    Q(\{\nu \in \MM^\square : \spt(\nu) \text{ is not rectifiable}\}) = 1.
  \end{equation*}
  Now defining
  \begin{equation*}
    P = pQ + (1-p)\delta_{\HH},
  \end{equation*}
  where $\HH$ is again the normalization of $\HH^1|_{\ell \cap B(0,1)}$ for a fixed line $\ell$ through the origin, we obtain a restricted fractal distribution $P$ that satisfies
  \begin{equation*}
    P(\{\nu \in \MM^\square : \spt(\nu) \text{ is not rectifiable}\}) = p.
  \end{equation*}
  Theorem \ref{thm:theorem22} guarantees the existence of a uniformly scaling measure $\mu$ generating $P$, so the proof is complete.
\end{proof}

The following theorem is the main conical density result for average unrectifiable measures. By Lemma \ref{lem:dim-unrectifiability}, the result generalizes Theorem \ref{thm:theorem25} for a fixed measure. Observe that, in Theorem \ref{thm:theorem25}, the value of $\eps$ is independent of $\mu$ whereas here it is not. We say that a Radon measure is \emph{regular} if
\begin{equation*}
  C^{-1}r \le \mu(B(x,r)) \le Cr
\end{equation*}
for all $r>0$ and $\mu$-almost all $x \in \R^2$.

\begin{theorem} \label{thm:theorem27}
  If $0 \le p<1$ and $\mu$ is a $p$-average unrectifiable measure, then for every $0<\alpha\le 1$ there exists $0<\eps<1$ such that
  \begin{equation} \label{eq:con-dens-result}
    \liminf_{T \to \infty} \la \mu \ra_{x,T}(\MM^\square \setminus \AA^\alpha_\eps) > p
  \end{equation}
  for $\mu$-almost all $x \in \R^2$. Conversely, if a regular measure $\mu$ satisfies \eqref{eq:con-dens-result} for some $0 \le p < 1$ and $0<\alpha,\eps<1$, then $\mu$ is $p$-average unrectifiable.
\end{theorem}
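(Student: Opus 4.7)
The unifying object is the bridge inequality
\[
  P(\AA^\alpha_0) \le P(R), \qquad R := \{\nu \in \MM^\square : \spt(\nu) \text{ is rectifiable}\},
\]
valid for every fractal distribution $P$. Granted this, both directions of the theorem follow by combining it with the compactness and semicontinuity machinery from Section \ref{sec:scenery-flow} and the routine lower bound
\[
  \liminf_{T\to\infty}\la\mu\ra_{x,T}(U)\ge\inf_{P\in\TD(\mu,x)}P(U)
\]
for open $U\subset\MM^\square$, which follows from extracting a weakly convergent subsequence realising the liminf.

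The bridge is proved as follows. Decompose $P$ into ergodic components $P_\omega$. Since $\AA^\alpha_0\subset S_t^{-1}\AA^\alpha_0$ by Lemma \ref{thm:lemma24}, the $S$-invariance and ergodicity of $P_\omega$ force $P_\omega(\AA^\alpha_0)\in\{0,1\}$. When $P_\omega(\AA^\alpha_0)=1$ and $\dim P_\omega>0$ (the degenerate case $\dim P_\omega=0$ gives $P_\omega=\delta_{\delta_0}$ by Lemma \ref{lem:onlydim}, whose support is trivially rectifiable), the quasi-Palm variant Lemma \ref{thm:quasi-palm-mod} provides, for $P_\omega$-almost every $\nu$, that $\nu$-almost every $z$ admits some $t_z \ge 0$ with $\nu_{z,t_z}\in\AA^\alpha_0$, so that the cone $X(z,e^{-t_z},\theta_z,\alpha)$ is disjoint from $\spt(\nu)$. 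Writing $E$ for the set of such $z\in\spt(\nu)$, we have $\nu(E)=1$ and $\spt(\nu)=\overline E$. Decomposing $E=\bigcup_{(h,i,j,k)\in\N^4}E_{h,i,j,k}$ as in the proof of Lemma \ref{thm:lemma1} exposes pieces satisfying a uniform cone condition, which persists under closure (the defining inequality being closed) and, by the Lipschitz-graph construction at the end of that proof, places each $\overline{E_{h,i,j,k}}$ inside a single Lipschitz curve. A pigeonhole argument on the countable index set gives $\overline E=\bigcup\overline{E_{h,i,j,k}}$, hence $\spt(\nu)$ is a countable union of rectifiable sets, hence rectifiable. Integration over $\omega$ yields the bridge.

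For the forward direction, $p$-average unrectifiability together with the bridge gives $P(\AA^\alpha_0)\le P(R)<1-p$ for every $P\in\TD(\mu,x)$ at $\mu$-almost every $x$. Since $P\mapsto P(\AA^\alpha_\eps)$ is upper semicontinuous (each $\AA^\alpha_\eps$ being closed) and $\TD(\mu,x)$ is compact, the supremum $\sup_{P\in\TD(\mu,x)}P(\AA^\alpha_\eps)$ is attained; a weak-limit extraction along $\eps\downarrow 0$, using again that $\AA^\alpha_\eps\downarrow\AA^\alpha_0$, shows this supremum converges to $\sup_{P\in\TD(\mu,x)}P(\AA^\alpha_0)<1-p$. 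Thus some $\eps(x)>0$ makes every $P\in\TD(\mu,x)$ satisfy $P(\MM^\square\setminus\AA^\alpha_\eps)>p$, and the lower bound then delivers the claimed liminf. The main technical obstacle is producing a single $\eps>0$ valid $\mu$-almost everywhere rather than a pointwise $\eps(x)$: my plan is to argue by contradiction, producing via compactness of the family of fractal distributions (Theorem \ref{thm:theorem20}) a limit fractal distribution $P^*$ with $P^*(\AA^\alpha_0)\ge 1-p$, and to verify that $P^*$ arises as a tangent distribution at a $\mu$-typical point, where the bridge yields the desired contradiction with $p$-average unrectifiability.

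For the converse, the plan is to establish the reverse bridge $P(R)\le P(\AA^\alpha_\eps)$ for fractal distributions $P$ arising from a regular measure. At $\mu$-almost every $x$, the ergodic components $P_\omega$ of $P\in\TD(\mu,x)$ with $P_\omega(R)=1$ are concentrated on $1$-Ahlfors-regular measures whose tangent measures at almost every point are flat, by \cite[Theorem 16.5]{Mattila1995}; the quasi-Palm property of $P_\omega$ then propagates this pointwise flatness to $P_\omega$ itself, placing $P_\omega$-almost every $\nu$ in $\AA^\alpha_0\subset\AA^\alpha_\eps$. Integrating gives $P(R)\le P(\AA^\alpha_\eps)$, and the liminf hypothesis together with the lower bound forces $P(\AA^\alpha_\eps)<1-p$, so $P(R^c)>p$ and $\mu$ is $p$-average unrectifiable. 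The main obstacle in the converse is justifying the reverse bridge rigorously: the quasi-Palm propagation from ``flat at $\nu$-a.e.\ $z$'' to ``$\nu$ itself flat'' invokes the $1$-regularity inherited from $\mu$ in an essential way, and is the only place where regularity is used.
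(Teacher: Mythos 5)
Your forward direction is essentially the paper's argument: the ``bridge inequality'' $P(\AA^\alpha_0)\le P(R)$ is established exactly as in the text, via ergodic decomposition, the $0$--$1$ law for the invariant set $\AA^\alpha_0$, the quasi-Palm variant of Lemma~\ref{thm:quasi-palm-mod}, and the rectifiability criterion of Lemma~\ref{thm:lemma1}. Your treatment of the closure issue ($\overline{E_{h,i,j,k}}$ lies on a Lipschitz graph because the uniform cone condition is a closed condition on each piece) is a fair point to raise, but the claimed identity $\overline E=\bigcup\overline{E_{h,i,j,k}}$ is false in general --- the closure of a countable union strictly contains the union of closures, and no pigeonhole argument fixes this. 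You should instead argue directly that $\spt(\nu)=\overline{E}$ is covered by the Lipschitz curves, or work with the measure rather than the support. For the uniform~$\eps$, the paper fixes a single $\mu$-typical point $x$ in the countable intersection $\bigcap_{\eps\in\Q}E_\eps$ of full-measure sets, then extracts one tangent distribution $P\in\TD(\mu,x)$ satisfying $P(\AA^\alpha_\eps)\ge 1-p$ for all rational $\eps$ simultaneously, hence $P(\AA^\alpha_0)\ge 1-p$; your sketch (``verify $P^*$ arises as a tangent distribution at a $\mu$-typical point'') points in this direction but leaves precisely the step of producing a single typical point where the negation holds for all $\eps$ at once.

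The converse direction contains a genuine gap. You want to conclude $P(\AA^\alpha_\eps)<1-p$ from the hypothesis $\liminf_T\la\mu\ra_{x,T}(\MM^\square\setminus\AA^\alpha_\eps)>p$. But $\AA^\alpha_\eps$ is closed, and for a closed set the Portmanteau inequality gives $P(\AA^\alpha_\eps)\ge\limsup_n\la\mu\ra_{x,T_n}(\AA^\alpha_\eps)$ --- a lower bound on $P(\AA^\alpha_\eps)$, not an upper one. Dually, $\MM^\square\setminus\AA^\alpha_\eps$ is open, so one only gets $P(\MM^\square\setminus\AA^\alpha_\eps)\le\liminf_n\la\mu\ra_{x,T_n}(\cdot)$, again the wrong direction for a lower bound on the $P$-measure. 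The paper circumvents this by introducing the \emph{closed} set
\[
\BB=\{\nu\in\MM^\square:\nu(\overline{X(0,1,\theta,\alpha)})\ge\eps\text{ for all }\theta\in S^1\},
\]
which contains $\MM^\square\setminus\AA^\alpha_\eps$ (replace strict inequalities and open cones by non-strict inequalities and closed cones). Being closed, $\BB$ satisfies $P(\BB)\ge\limsup_n\la\mu\ra_{x,T_n}(\BB)\ge\liminf_T\la\mu\ra_{x,T}(\MM^\square\setminus\AA^\alpha_\eps)>p$ for every $P\in\TD(\mu,x)$, and the ergodic-theoretic argument (Theorem~\ref{thm:theorem22}, \cite[Theorem~16.5]{Mattila1995} and the regularity of $\mu$ to pass from rectifiable supports to rectifiable measures) shows that each ergodic component $P_\omega$ with positive mass on rectifiable supports equals $\delta_\HH$ and hence gives $P_\omega(\BB)=0$, so $P(\BB)\le P(R^c)$. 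Your proposed reverse bridge $P(R)\le P(\AA^\alpha_\eps)$ is a true inequality but useless here: the hypothesis simply cannot give an upper bound on $P(\AA^\alpha_\eps)$. Without introducing $\BB$ (or an equivalent device turning the open condition into a closed one), the converse does not go through.
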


\begin{proof}
  To show the first claim, we observe that, by the Besicovitch density theorem (see e.g.\ \cite[Corollary 2.14(1)]{Mattila1995}), it suffices to prove the statement on a set of positive $\mu$-measure. Suppose to the contrary that a $p$-average unrectifiable measure $\mu$ and $0<\alpha\le 1$ are such that for each $0<\eps\le 1$ the condition \eqref{eq:con-dens-result} fails to hold in a set $E_\eps$ of full $\mu$-measure. Recalling Theorem \ref{thm:theorem11}, we may assume that all tangent distributions at points $x \in E_\eps$ are restricted fractal distributions and satisfy $P(\{ \nu \in \MM^\square : \spt(\nu) \text{ is rectifiable} \}) < 1-p$. Let $x \in \bigcap E_\eps$, where the intersection is over all rational $0<\eps\le 1$. Then
  \begin{equation*}
    \limsup_{T \to \infty} \la \mu \ra_{x,T}(\AA^\alpha_\eps) > 1-p
  \end{equation*}
  for all rational $0<\eps\le 1$. We choose a tangent distribution $P$ so that $P(\AA^\alpha_\eps) \ge 1-p$ for all rational $0<\eps\le 1$. Since, by Lemma \ref{thm:lemma24}, the sets $\AA^\alpha_\eps$ are nested and closed, we get $P(\AA^\alpha_0) \ge 1-p$.

  Recall that, by Theorem \ref{thm:theorem10}, $P$-almost every ergodic component $P_\omega$ is a restricted fractal distribution. Furthermore, as remarked in the proof of Theorem \ref{thm:theorem25}, the set $\AA^\alpha_0$ is $S_t$-invariant up to $P$-measure zero. Thus we have $P_\omega(\AA^\alpha_0) \in \{0,1\}$ for $P$-almost every ergodic component $P_\omega$. Notice that
  \[
    P(\{ \omega : P_\omega(\AA^\alpha_0) = 1 \}) = P(\AA^\alpha_0) \ge 1-p.
  \]
  If $P_\omega(\AA^\alpha_0) = 1$, then, by the quasi-Palm property of Lemma \ref{thm:quasi-palm-mod}, for $P_\omega$-almost every $\nu$ and for $\nu$-almost every $z$ the normalized translation $\nu_{z,t_z}$ is an element of $\AA^\alpha_0$ for some $t_z > 0$ with $B(z,e^{-t_z}) \subset B(0,1)$. For each such $\nu$ let $E = \{ z \in B(0,1) : \nu_{z,t_z} \in \AA^\alpha_0 \}$ be this set of full $\nu$-measure. Thus for every $z \in E$ there are $V \in G(d,d-k)$ and $\theta \in S^{d-1}$ with
  $$
    E \cap X(z,e^{-t_z},V,\alpha) \setminus H(z,\theta,\alpha) = \emptyset.
  $$
  Lemma \ref{thm:mattila} implies that $E$ is rectifiable. Consequently,
  \[
    P_\omega(\{ \nu \in \MM^\square : \spt\nu \text{ is rectifiable} \}) = 1.
  \]
  Thus by the ergodic decomposition
  $$
    P(\{ \nu \in \MM^\square : \spt\nu \text{ is strongly $k$-rectifiable} \}) \geq 1-p.
  $$
  The proof of the first claim is now finished since this contradicts the $p$-average unrectifiability assumption.
  
  To show the second claim, let $0 \le p < 1$ and $0<\alpha,\eps<1$ be such that a regular measure $\mu$ satisfies \eqref{eq:con-dens-result} for $\mu$-almost all $x \in \R^2$ with these parameters. Fix a point $x$ such that \eqref{eq:con-dens-result} is satisfied at $x$ and all tangent distributions at $x$ are restricted fractal distributions; recall Theorem \ref{thm:theorem11}. Let
  \begin{equation} \label{eq:large-mass-in-cones}
    \BB =  \{ \nu \in \MM^\square : \nu(\overline{X(0,1,\theta,\alpha)}) \ge \eps \text{ for all $\theta \in S^1$} \}.
  \end{equation}
  As in Lemma \ref{thm:lemma24}, we see that the set $\BB$ is closed. Using this and the hypothesis \eqref{eq:con-dens-result}, we have
  \begin{equation*}
    P(\BB) \ge \liminf_{T \to \infty} \la \mu \ra_{x,T}(\BB) > p
  \end{equation*}
  for all $P \in \TD(\mu,x)$.
  
  Let $P$ be a restricted ergodic fractal distribution such that
  \[
    P(\{\nu\in\MM^\square : \nu \text{ is rectifiable}  \}) > 0.
  \]
  By Theorem \ref{thm:theorem22}, there exists a uniformly scaling measure which is rectifiable and generates $P$. Then it follows from \cite[Theorem 16.5]{Mattila1995} that there exists a line $\ell$ through the origin such that $P=\delta_{\HH}$, where $\HH$ is the normalized restriction of $\HH^1$ to $\ell \cap B(0,1)$.
  
  Since $\mu$ was assumed to be regular, \cite[Lemma 14.7(1)]{Mattila1995} implies that, for $\mu$-almost every $x$, all tangent measures $\nu$ at $x$ are regular and hence $\nu \ll \HH^1$. Therefore, if $P_\omega$ is an ergodic component of $P \in \TD(\mu,x)$ satisfying
  \[
    P_\omega(\{\nu\in\MM^\square : \spt(\nu) \text{ is rectifiable}\}) > 0,
  \]
  then there exists a line $\ell$ through the origin such that $P=\delta_{\HH}$, where $\HH$ is the normalized restriction of $\HH^1$ to $\ell \cap B(0,1)$, and hence $P_\omega(\BB)=0$. Thus
  \[
    p <  P(\BB) = \int P_\omega(\BB) \dd P(\omega) \le P(\{\nu\in\MM^\square : \spt(\nu) \text{ is not rectifiable}\}).
  \]
  As $x$ was a $\mu$-typical point and $P\in\TD(\mu,x)$ was arbitrary, this finishes the proof of the second claim.
\end{proof}


\end{document}